%
%
%
\documentclass[10 pt]{amsart}

\usepackage{amssymb,amsmath,amsfonts,mathrsfs}
\usepackage[title]{appendix}

\newtheorem{theorem}{Theorem}[section]
\newtheorem{lemma}{Lemma}[section]
\newtheorem{corollary}{Corollary}[section]
\newtheorem{prop}{Proposition}[section]

\newtheorem{definition}{Definition}[section]

\theoremstyle{remark}
\newtheorem{remark}{Remark}[section]

\numberwithin{equation}{section}


\newcommand{\dom}{\operatorname{Dom}}

\newcommand\CC{\mathbb{C}}
\newcommand\RR{\mathbb{R}}
\newcommand\ZZ{\mathbb{Z}}
\newcommand\NN{\mathbb{N}}
\newcommand\TT{\mathbb{T}}
\newcommand\del{\Delta}
\newcommand\p{\partial}


\newcommand\sml{S^{m}(\ZZ^n\times \TT^n)}



\hyphenation{Be-re-zan-sky}

\begin{document}

\title{Extended Sobolev Scale on $\mathbb{Z}^n$}
\author{Ognjen Milatovic}
\address{Department of Mathematics and Statistics\\
         University of North Florida   \\
       Jacksonville, FL 32224 \\
        USA
           }
\email{omilatov@unf.edu}

\subjclass[2010]{35S05, 46B70, 46E35, 47G30}

\keywords{extended Sobolev scale, interpolation with a function parameter, $n$-dimensional integer lattice, $RO$-varying function}

\begin{abstract}
In analogy with the definition of ``extended Sobolev scale" on $\RR^n$ by Mikhailets and Murach, working in the setting of the lattice $\ZZ^n$, we define the ``extended Sobolev scale" $H^{\varphi}(\ZZ^n)$, where $\varphi$ is a function which is $RO$-varying at infinity. Using the scale $H^{\varphi}(\ZZ^n)$, we describe all Hilbert function-spaces that serve as interpolation spaces with respect to a pair of discrete Sobolev spaces $[H^{(s_0)}(\ZZ^n), H^{(s_1)}(\ZZ^n)]$, with $s_0<s_1$. We use this interpolation result to obtain the mapping property and the Fredholmness property of (discrete) pseudo-differential operators (PDOs) in the context of the scale $H^{\varphi}(\ZZ^n)$. Furthermore, starting from a first-order positive-definite (discrete) PDO $A$ of elliptic type, we define the ``extended discrete $A$-scale" $H^{\varphi}_{A}(\ZZ^n)$ and show that it coincides, up to norm equivalence, with the scale $H^{\varphi}(\ZZ^n)$. Additionally, we establish the $\ZZ^n$-analogues of several other properties of the scale $H^{\varphi}(\RR^n)$.
\end{abstract}

\maketitle

\section{Introduction}\label{S:S-intro}

Owing to their role in the discretization of continuous problems, difference equations and the corresponding pseudo-differential operators (PDO) on the lattice $\ZZ^n$ have attracted quite a bit of attention in the last decade; see, for instance the papers~\cite{CDK-20, CK-19, DW-13, gjbnm-16, Mol-10, Rab-10, Rab-9, Rod-11}, keeping in mind that in some articles $n=1$. Another milestone in this field of study was marked by the article~\cite{BRK-20}, in which the authors developed the corresponding global symbol calculus. In this setting, $\mathbb{Z}^n\times\TT^n$ (here, $\TT^n:=\RR^n/\ZZ^n$)  plays the role of the phase space, whereby the frequency component belongs to the $n$-torus $\TT^n$. It turns out that the symbol class $S^{m}_{\rho,\delta}(\ZZ^n\times\TT^n)$, with $m,\rho,\delta\in\RR$, introduced in~\cite{BRK-20} (see definition~\ref{D-1} below for the special case $S^{m}(\ZZ^n\times\TT^n)$ with $\rho=1$ and $\delta=0$), bears some resemblance to that of the so-called $SG$-operators discussed in~\cite{Das-08,DW-08}. With the symbol calculus at their disposal, the authors of~\cite{BRK-20} proved a number of fundamental results concerning, among other things, $\ell^2(\ZZ^n)$-boundedness (here, $\ell^2(\ZZ^n)$ denotes the space of square summable complex-valued functions on $\ZZ^n$), compactness, and $H^{(s)}(\ZZ^n)$-boundedness of the corresponding operators, where, $H^{(s)}(\ZZ^n)$, $s\in\RR$, indicates (see section~\ref{SS:s-2-6} below for details) the Sobolev scale on $\ZZ^n$. Subsequently, the authors of~\cite{DK-20} studied further properties of PDO on $\ZZ^n$, including the relationship between the maximal and minimal realizations of a pseudo-differential operator in $\ell^2(\ZZ^n)$. More recently, the authors of~\cite{km-21} considered weighted $M^{m}_{\rho,\Lambda}(\ZZ^n\times\mathbb{T}^n)$-type symbols (discrete counterparts of $M^{m}_{\rho,\Lambda}$-type symbols on $\RR^n$ discussed in~\cite{DM-23,GM-03,GM-05, W-06}), and after developing the corresponding calculus and defining the corresponding weighted Sobolev scale on $\ZZ^n$, investigated various questions concerning the corresponding PDOs on $\ZZ^n$.

In parallel with the developments described in the preceding paragraph, starting with the seminal papers~\cite{MM-09,MM-13} (see also section 2.4 of the monograph~\cite{MM-14}), Mikhailets and Murach proposed the so-called extended Sobolev scale $H^{\varphi}(\RR^n)$, defined similar to $H^{(s)}(\RR^n)$, $s\in\RR$, but with $\varphi(\langle\xi\rangle)$ in place of $\langle\xi\rangle^{s}$, where $\varphi$ is a function $RO$-varying at $\infty$ and satisfies some additional properties; see section~\ref{SS-1-8} below for precise description of the function class $RO$. It turns out that the scale $H^{\varphi}(\RR^n)$ is a more general version of the so-called refined Sobolev scale, introduced by Mikhailets and Murach earlier in~\cite{MM-07} (see also section 1.3 in~\cite{MM-14}). What makes the scale $H^{\varphi}(\RR^n)$ particularly interesting is the following interpolation property (established in~\cite{MM-13}; see also~\cite{MM-15} for bounded domains $\Omega\subset\RR^n$ with Lipschitz boundary): a Hilbert space $\mathscr{S}$ is an interpolation space with respect to a pair (see section~\ref{SS-2-1} below for this concept) of the form
\begin{equation*}
[H^{(s_0)}(\RR^n),H^{(s_1)}(\RR^n)], \qquad -\infty<s_0<s_1<\infty,
\end{equation*}
if and only if $\mathscr{S}=H^{\varphi}(\RR^n)$, for some $\varphi\in RO$.

The aforementioned interpolation property (and other useful attributes studied in~\cite{MM-07,MM-13, MM-15}) of the refined (and extended) Sobolev spaces, sets a convenient stage for elaborating the theory of elliptic boundary-value problems on $\RR^n$ (and closed manifolds) and for establishing various results from spectral theory of differential operators on $\RR^n$ (and closed manifolds) in analogy with those that hold in the setting of the usual Sobolev spaces.  To get a taste of the degree of activity in this area during the last fifteen years, besides the papers mentioned so far, we refer the reader to the monograph~\cite{MM-14}, papers~\cite{DMM-21,AK-16,MM-21,Z-17}, and numerous references therein. For a study of $H^{\varphi}$-scale on compact manifolds with boundary see~\cite{Kas-19}. For theory of parabolic boundary-value problems on $\RR^n$ in the so-called anisotropic generalized Sobolev spaces, see the recent paper~\cite{LMM-21-paper}, the monograph~\cite{LMM-21}, and references therein.

The present article lies at the intersection of the two research tracks described in the preceding three paragraphs. After defining the (extended) Sobolev scale $H^{\varphi}(\ZZ^n)$, $\varphi\in RO$ (see section~\ref{SS-1-9} for the definition),  we show that the aforementioned interpolation property (and its variants) hold in the setting of (extended) Sobolev spaces on $\ZZ^n$; see theorems~\ref{T:main-1} and~\ref{T:main-2} below. Furthermore, in analogy with a ``quadratic interpolation" result of~\cite{MM-15} for bounded domains $\Omega\subset\RR^n$ with Lipschitz boundary, we show that (see theorem~\ref{T:main-3} below) the class $H^{\varphi}(\ZZ^n)$ is closed under interpolation with a function parameter. In theorem~\ref{T:main-4} we establish additional properties of the scale $H^{\varphi}(\ZZ^n)$, including the density of the Schwartz space $S(\ZZ^n)$ in $H^{\varphi}(\ZZ^n)$ and an embedding result $H^{\varphi}(\ZZ^n)\hookrightarrow \ell^{\infty}(\ZZ^n)$, where $\ell^{\infty}(\ZZ^n)$ stands for bounded functions on $\ZZ^n$.

A property from~\cite{BRK-20} says that a PDO $A$ of order $m\in\RR$ on $\ZZ^n$ (see section~\ref{SS:s-2-5} below for the definition of such PDOs) extends to a bounded linear operator $A\colon H^{(s)}(\ZZ^n)\to H^{(s-m)}(\ZZ^n)$. If, in addition, $A$ is an elliptic operator of order $m$, then $A\colon H^{(s)}(\ZZ^n)\to H^{(s-m)}(\ZZ^n)$ is a Fredholm operator whose index does not depend on $s$ (see theorem 4.2 in~\cite{DK-20} for the case $s=m=0$ and proposition~\ref{P:prop-s-s-m} below for general $s$ and $m$).  With this in mind, we use the interpolation result of theorem~\ref{T:main-2} to prove the corresponding mapping property and the Fredholmness property for PDOs in the scale $H^{\varphi}(\ZZ^n)$, $\varphi\in RO$; see theorems~\ref{T:main-5} and~\ref{T:main-5-f} below.

As in the corresponding definition in~\cite{MM-21} for closed manifolds, working in the setting of $\ZZ^n$,  for $\varphi\in RO$ and for a first-order PDO $A$ of elliptic type (see definition~\ref{D-2} below) satisfying $(Au,u)\geq \|u\|^2$ for all $u\in S(\ZZ^n)$, we define the so-called extended $A$-scale $H_{A}^{\varphi}(\ZZ^n)$. Under these hypotheses, in theorem~\ref{T:main-7} we show that, up to norm equivalence, we have $H_{A}^{\varphi}(\ZZ^n)=H^{\varphi}(\ZZ^n)$.

Lastly, we remark that the analogues of interpolation results of our article hold for the Sobolev scale $H^{(s)}(\hbar\ZZ^n)$ on the lattices $\hbar\ZZ^n$, $\hbar\in (0,1]$, and the corresponding mapping and Fredholmness properties hold for PDOs with $S^{m}_{\rho,\delta}(\hbar\ZZ^n\times\mathbb{T}^n)$-type symbols from~\cite{BCR-23} (here $0\leq\delta<\rho\leq 1$).  Furthermore, the results of our article also carry over to the weighted Sobolev spaces $H^{(s)}_{\Lambda}(\ZZ^n)$ and operators with $M^{m}_{\rho,\Lambda}(\ZZ^n\times\mathbb{T}^n)$-type symbols from~\cite{km-21}. To keep our presentation simpler, we chose to work in the setting of $S^{m}(\ZZ^n\times\mathbb{T}^n)$-type symbols from~\cite{BRK-20, DK-20}.

The article consists of eleven sections and an appendix. In section~\ref{S:res} we summarize the basic notations, define the usual Sobolev scale on $\ZZ^n$, $RO$-varying functions, the extended Sobolev scale on $\ZZ^n$, and PDOs on $\ZZ^n$, and we recall basic concepts concerning the interpolation with a function parameter. Additionally, in section~\ref{S:res} (more specifically subsections~\ref{SS-1-12}--\ref{SS-1-15}, \ref{SS-1-19}, \ref{SS-1-19-f}, and \ref{SS-1-22}) we state the main results (seven theorems and one corollary) of the article. For reader's convenience, in section~\ref{S:S-2} we recalled the statements of a few auxiliary results on interpolation with a function parameter. Sections~\ref{S:S-3}--\ref{S:S-9} contain the proofs of the main results. In the appendix we discuss the anti-duality of the spaces $H^{(s)}(\ZZ^n)$ and $H^{(-s)}(\ZZ^n)$.

\section{Notations and Results}\label{S:res}
\subsection{Basic Notations}\label{SS:s-2-1}
In this paper, the notations $\ZZ$, $\NN$, and $\NN_0$ indicate the sets of integers, positive integers, and non-negative integers respectively. For $n\in\NN$, we denote the $n$-dimensional integer lattice by $\ZZ^n$. For an $n$-dimensional multiindex $\alpha=(\alpha_1, \alpha_2, \dots, \alpha_n)$ with $\alpha_j\in \NN_0$,  we define $|\alpha|:=\alpha_1+ \alpha_2+ \dots+ \alpha_n$, and $\alpha!:=\alpha_1\alpha_2\dots\alpha_n$. For $k=(k_1,k_2,\dots,k_n)\in \ZZ^n$ and $\alpha\in\NN_0^{n}$, we define
\begin{equation*}
k^{\alpha}:=k_1^{\alpha_1}k_2^{\alpha_2}\dots k_n^{\alpha_n}
\end{equation*}
and
\begin{equation*}
|k|:=\sqrt{k_1^2+k_2^2+\dots+k_n^2}.
\end{equation*}

\subsection{Basic Operators}\label{SS:diff-op-intro}
Let $\{e_j\}_{j=1}^{n}$ be a collection of elements such that
\begin{equation*}
e_j:=(0,0,\dots, 1,0, \dots, 0),
\end{equation*}
with $1$ occupying the $j$-th slot and $0$ occupying the remaining slots.

For a function $u(k_1,k_2,\dots,k_n)$ of the input variable $k=(k_1,k_2,\dots, k_n)\in \ZZ^n$, the first partial difference operator $\del_{k_j}$ is defined as
\begin{equation*}
\del_{k_j}u(k):=u(k+e_j)-u(k),
\end{equation*}
where $k+e_j$ is the usual addition of the $n$-tuplets $k$ and $e_j$.
For a multiindex $\alpha\in \NN_0^{n}$, we set
\begin{equation*}
\del^{\alpha}_k:=\del^{\alpha_1}_{k_1}\del^{\alpha_2}_{k_2}\dots \del^{\alpha_n}_{k_n}.
\end{equation*}
We now recall basic differential operators on the $n$-dimensional torus $\mathbb{T}^n:=\RR^n/\ZZ^n$. For $x\in \mathbb{T}^n$ and $\alpha\in \NN^n$, we define
\begin{equation*}
D_{x_j}:=\frac{1}{2\pi i}\frac{\p}{\p{x_j}},\qquad D^{\alpha}_x:=D^{\alpha_1}_{x_1}D^{\alpha_2}_{x_2}\dots D^{\alpha_n}_{x_n},
\end{equation*}
where $i$ is the imaginary unit. Additionally, for $l\in \NN_0$ we define,
\begin{equation*}
D^{(l)}_{x_j}:=\prod_{r=0}^{l-1}\left(\frac{1}{2\pi i}\frac{\p}{\p{x_j}}-r\right),\qquad D^{(0)}_{x_j}:=1,
\end{equation*}
where ``1" refers to the identity operator. For $\alpha\in \NN_0^{n}$, we define
\begin{equation*}
D^{(\alpha)}_{x}:=D^{(\alpha_1)}_{x_1}D^{(\alpha_2)}_{x_2}\dots D^{(\alpha_n)}_{x_n}.
\end{equation*}

\subsection{Schwartz Space}\label{SS:s-2-3} As specified in~\cite{BRK-20} and~\cite{DK-20}, the Schwartz space $\mathcal{S}(\ZZ^n)$ consists of the functions $u\colon\ZZ^n\to\CC$ such that for all $\alpha,\beta\in\NN_0^n$ we have
\begin{equation*}
\displaystyle\sup_{k\in\ZZ^n}|k^{\alpha}(\del_{k}^{\beta}u)(k)|<\infty.
\end{equation*}
The symbol $S'(\ZZ^n)$ indicates the space of tempered distributions, that is, continuous linear functionals on $\mathcal{S}(\ZZ^n)$.

\subsection{Discrete $L^p$-space}\label{SS:s-2-4-l2}
For $1\leq p<\infty$ we define $\ell^p(\ZZ^n)$ as the space of functions $u\colon \ZZ\to \CC$ such that $\|u\|_{p}<\infty$, where
\begin{equation*}
\|u\|_{p}^{p}:=\sum_{k\in\ZZ^n}|u(k)|^p.
\end{equation*}
In particular for $p=2$ we get a Hilbert space $\ell^2(\ZZ^n)$ with the inner product
\begin{equation}\label{E:inner-l-2}
  (u,v):=\sum_{k\in\ZZ^n}u(k)\overline{v(k)}.
\end{equation}
To simplify the notation we will denote the corresponding norm in $\ell^2(\ZZ^n)$ by $\|\cdot\|$.

We define $\ell^{\infty}(\ZZ^n)$ as the space of functions $u\colon \ZZ\to \CC$ such that $\|u\|_{\infty}<\infty$, where
\begin{equation*}
\|u\|_{\infty}:=\sup_{k\in\ZZ^n}|u(k)|.
\end{equation*}

\subsection{Sobolev Scale on $\ZZ^n$}\label{SS:s-2-6} To formulate our results we will need discrete Sobolev spaces as described in section 2 of~\cite{DK-20}.

For $s\in\RR$, we define
\begin{equation}\label{sob-mult}
\langle k\rangle :=(1+|k|^2)^{\frac{1}{2}}, \qquad k\in\ZZ^n.
\end{equation}

Next, for $s\in\RR$ we define
\begin{equation}\label{D:sob-sp}
  H^{(s)}(\ZZ^n):=\{u\in \mathcal{S}'(\ZZ^n)\colon \langle k\rangle^{s} u\in \ell^2(\ZZ^n)\}
\end{equation}
with the norm $\|u\|_{H^{(s)}}:=\|\langle k\rangle^{s}u\|$, where $\|\cdot\|$ is the norm corresponding to the inner product~(\ref{E:inner-l-2}) in $\ell^2(\ZZ^n)$.

\begin{remark}\label{R:weighted} We can view the space $H^{(s)}(\ZZ^n)$ as a weighted space $\ell_{\langle k\rangle^{2s}}^2(\ZZ^n)$, that is, the $\ell^2$-space with weight $\langle k\rangle^{2s}$.
\end{remark}

\begin{remark}\label{R:dense-sobolev}
An important property, established in lemma 3.16 of~\cite{DK-20}, is the density of the space $\mathcal{S}(\ZZ^n)$ in $H^{(s)}(\ZZ^n)$ for all $s\in\RR$.
\end{remark}

The definition of ``extended Sobolev scale," as specified in section 2.4 of~\cite{MM-14}, relies on the so-called $RO$-varying functions.

\subsection{$RO$-varying Functions}\label{SS-1-8} Throughout this section we follow the terminology of section 2.4.1 in~\cite{MM-14}.
We say that a function $\varphi\colon [1,\infty)\to (0,\infty)$ is \emph{$RO$-varying at infinity} if
\begin{enumerate}
  \item [(i)] $\varphi$ is Borel measurable
  \item [(ii)] there exist numbers $a>1$ and $c\geq 1$ (depending on $\varphi$) such that
  \begin{equation}\label{E:RO-1}
    c^{-1}\leq\frac{\varphi(\lambda t)}{\varphi(t)}\leq c, \qquad\textrm{for all }t\geq 1,\,\,\lambda\in [1,a].
  \end{equation}
\end{enumerate}
In the sequel, the inclusion $\varphi\in RO$ means that a function $\varphi\colon [1,\infty)\to (0,\infty)$ is $RO$-varying at infinity.

By proposition 1 of~\cite{MM-13}, if $\varphi\in RO$, then $\varphi$ is bounded and separated from zero on every interval of the form $[1,b]$ with $b>1$. Furthermore, according to the same proposition, the condition~(\ref{E:RO-1}) has the following equivalent formulation: there exist numbers $s_0\leq s_1$ and $c\geq 1$  such that
\begin{equation}\label{E:RO-2}
   t^{-s_0}\varphi(t)\leq c\tau^{-s_0}\varphi(\tau),\qquad \tau^{-s_1}\varphi(\tau)\leq ct^{-s_1}\varphi(t) \qquad\textrm{for all }1\leq t\leq \tau.
  \end{equation}

To conclude this section, we review the concept of lower/upper Matuszewska indices of $\varphi\in RO$. Let $\varphi\in RO$. Setting $\lambda:=\frac{\tau}{t}$, we can rewrite~(\ref{E:RO-2}) as
\begin{equation}\label{E:RO-3}
    c^{-1}\lambda^{s_0}\leq\frac{\varphi(\lambda t)}{\varphi(t)}\leq c\lambda^{s_1}, \qquad\textrm{for all }t\geq 1,\,\,\lambda\geq 1.
  \end{equation}
We define the \emph{lower Matuszewska index} $\sigma_0(\varphi)$ as the supremum of all $s_0\in\RR$ such that the leftmost inequality in~(\ref{E:RO-3}) is satisfied. Likewise, we define the \emph{upper Matuszewska index} $\sigma_1(\varphi)$ as the infimum of all $s_1\in\RR$ such that the rightmost inequality in~(\ref{E:RO-3}) is satisfied. Note that $-\infty<\sigma_0(\varphi)\leq \sigma_1(\varphi)<\infty$.

\subsection{Extended Sobolev Scale on $\ZZ^n$}\label{SS-1-9} For $\varphi\in RO$ we define $H^{\varphi}(\ZZ^n)$ as
\begin{equation}\label{E:sob-phi-1}
H^{\varphi}(\ZZ^n):=\{u\in S'(\ZZ^n)\colon \varphi(\langle k \rangle)u\in \ell^2(\ZZ^n)\}.
\end{equation}
As in the case of the spaces $H^{(s)}(\ZZ^n)$, it turns out that $H^{\varphi}(\ZZ^n)$ is a Hilbert space with the inner product
\begin{equation}\label{E:sob-phi-2}
(u,v)_{H^{\varphi}(\ZZ^n)}:=\sum_{k\in\ZZ} [\varphi(\langle k \rangle)]^2 u(k)\overline{v(k)}
\end{equation}
and the norm corresponding to~(\ref{E:sob-phi-2}) will be denoted by $\|\cdot\|_{H^{\varphi}(\ZZ^n)}$.

Note that if $\varphi(t)=t^{s}$, $s\in\RR$, the space $H^{\varphi}(\ZZ^n)$ leads to the Sobolev space $H^{(s)}(\ZZ^n)$. In this article, by \emph{extended Sobolev scale} on $\ZZ^n$ we mean the class of spaces $\{H^{\varphi}(\ZZ^n)\colon \varphi\in \RR\}$.

In the next section we review some terminology from sections 1.1.1 and 1.1.2 of~\cite{MM-14} concerning interpolation of a pair of Hilbert spaces with a function parameter.

\subsection{Interpolation Between Hilbert Spaces}\label{SS-2-1} By an \emph{admissible pair} of separable complex Hilbert spaces we mean an ordered pair $[\mathscr{H}_{0},\mathscr{H}_{1}]$ such that $\mathscr{H}_{1}\hookrightarrow\mathscr{H}_{0}$, with the embedding being continuous and dense.  As indicated in section 1.2.1 of~\cite{Lions-72}, every admissible pair $[\mathscr{H}_{0},\mathscr{H}_{1}]$ is equipped with a so-called \emph{generating operator} $J$, such that
\begin{enumerate}
  \item [(i)] $J$ is a positive self-adjoint operator in $\mathscr{H}_{0}$ with $\dom(J)=\mathscr{H}_{1}$;
   \item [(ii)] $\|Ju\|_{\mathscr{H}_{0}}=\|u\|_{\mathscr{H}_{1}}$, for all $u\in \dom(J)=\mathscr{H}_{1}$.
\end{enumerate}

According to section 1.1.1 in~\cite{MM-14}, the operator $J$ is uniquely determined by the admissible pair $[\mathscr{H}_{0},\mathscr{H}_{1}]$.

Let $\mathcal{B}$ be the set of all Borel measurable functions $\psi\colon (0,\infty)\to (0,\infty)$ satisfying the following two properties: $\psi$ is bounded that on every interval $[a,b]$ with $0 <a<b<\infty$, and $\frac{1}{\psi}$ is bounded on every interval $(c,\infty)$ with $c > 0$. For an admissible pair $\mathscr{H}:=[\mathscr{H}_{0},\mathscr{H}_{1}]$ with generating operator $J$ and for a function $\psi\in \mathcal{B}$, spectral calculus gives rise to a (positive self-adjoint) operator $\psi(J)$ in $\mathscr{H}_{0}$. We define a (separable, Hilbert) space $[\mathscr{H}_{0},\mathscr{H}_{1}]_{\psi}$ (or, in abbreviated form, $\mathscr{H}_{\psi}$) as follows: $[\mathscr{H}_{0},\mathscr{H}_{1}]_{\psi}:=\dom(\psi(J))$ with the inner product
\begin{equation}\label{E:H-psi}
(u,v)_{\mathscr{H}_{\psi}}:=(\psi(J)u,\psi(J)v)_{\mathscr{H}_{0}},
\end{equation}
and the corresponding norm $\|u\|_{\mathscr{H}_{\psi}}:=\|\psi(J)u\|_{\mathscr{H}_{0}}$,
where $(\cdot,\cdot)_{\mathscr{H}_{0}}$ and $\|\cdot\|_{\mathscr{H}_{0}}$ are the inner product and the norm in ${\mathscr{H}_{0}}$.

Having defined the space $[\mathscr{H}_{0},\mathscr{H}_{1}]_{\psi}$, we proceed to describe the notion of interpolation parameter $\psi\in\mathcal{B}$. We say that a function $\psi\in\mathcal{B}$ is an \emph{interpolation parameter} if the following condition is satisfied for all admissible pairs $\mathscr{H}:=[\mathscr{H}_{0},\mathscr{H}_{1}]$ and $\mathscr{K }:=[\mathscr{K}_{0},\mathscr{K}_{1}]$ and for all linear operators $T$ with $\mathscr{H}_{0}\subseteq \dom(T)$: if the restrictions $T|_{\mathscr{H}_{0}}$ and $T|_{\mathscr{H}_{1}}$ act as bounded linear operators $T\colon \mathscr{H}_{0}\to \mathscr{K}_{0}$ and $T\colon \mathscr{H}_{1}\to \mathscr{K}_{1}$, then the restriction
$T|_{\mathscr{H}_{\psi}}$ acts as a bounded linear operator $T\colon \mathscr{H}_{\psi}\to \mathscr{K}_{\psi}$.

In this case, we say that the space $\mathscr{H}_{\psi}$ is obtained by interpolation of the pair $\mathscr{H}=[\mathscr{H}_{0},\mathscr{H}_{1}]$ with a function parameter $\psi\in\mathcal{B}$. Moreover, we have the following continuous dense embeddings: $\mathscr{H}_{1}\hookrightarrow\mathscr{H}_{\psi}\hookrightarrow\mathscr{H}_{0}$.

\subsection{Interpolation Space}\label{SS-1-11} Let $[\mathscr{H}_{0},\mathscr{H}_{1}]$ be an ordered pair  of separable complex Hilbert spaces such that $\mathscr{H}_{1}\hookrightarrow\mathscr{H}_{0}$, where the arrow stands for continuous embedding. We say that a Hilbert space $\mathscr{S}$ is an \emph{interpolation space with respect to a pair} $[\mathscr{H}_{0},\mathscr{H}_{1}]$ if the following conditions are satisfied:
\begin{enumerate}
  \item [(i)] we have continuous embeddings $\mathscr{H}_{1}\hookrightarrow\mathscr{S}\hookrightarrow\mathscr{H}_{0}$;
  \item [(ii)] any linear operator $T$ in $\mathscr{H}_{0}$ which acts as a bounded linear operator $T\colon \mathscr{H}_{0}\to \mathscr{H}_{0}$ and $T\colon \mathscr{H}_{1}\to \mathscr{H}_{1}$, has the property that $T\colon \mathscr{S}\to\mathscr{S}$ is also a bounded linear operator.
\end{enumerate}
\begin{remark}\label{R-rem-int} Part (ii) of the above definition leads to the following property (see theorem 1.8 in~\cite{MM-14} or theorem 2.4.2 in~\cite{BL-book}):
\begin{equation*}
  \|T\|_{\mathscr{S}\to \mathscr{S}}\leq C \max \{\|T\|_{\mathscr{H}_0\to \mathscr{H}_0},\|T\|_{\mathscr{H}_1\to \mathscr{H}_1}\},
\end{equation*}
where $C>0$ is a constant independent of $T$.
\end{remark}

\subsection{First Interpolation Result}\label{SS-1-12} Our first result concerns interpolation of a pair of (usual) Sobolev spaces $[H^{(s_0)}(\ZZ^n), H^{(s_1)}(\ZZ^n)]$, where $s_0<s_1$ are real numbers. This result is an analogue of theorem 1 of~\cite{MM-13} concerning the pair $[H^{(s_0)}(\RR^n), H^{(s_1)}(\RR^n)]$, $s_0<s_1$.

Before stating the result, we recall (see proposition 3.4 in~\cite{DK-20}) that there is a continuous (and dense) embedding $H^{(s_1)}(\ZZ^n)\hookrightarrow H^{(s_0)}(\ZZ^n)$, $s_0<s_1$.


\begin{theorem}\label{T:main-1} The following are equivalent:
\begin{enumerate}
  \item [(i)] A Hilbert space $\mathscr{S}$ is an interpolation space with respect to a pair\\
  $[H^{(s_0)}(\ZZ^n), H^{(s_1)}(\ZZ^n)]$, where $s_0<s_1$ are some real numbers.
  \item [(ii)] Up to norm equivalence, we have  $\mathscr{S}=H^{\varphi}(\ZZ^n)$, for some function $\varphi\in RO$ satisfying the condition~(\ref{E:RO-3}).
\end{enumerate}
\end{theorem}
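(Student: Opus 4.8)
The plan is to reduce the statement to the corresponding theorem for $\RR^n$ (theorem 1 of~\cite{MM-13}), or alternatively to give a direct argument using the structure of $H^{(s)}(\ZZ^n)$ as a weighted $\ell^2$-space. I will pursue the direct route, since on $\ZZ^n$ everything is explicitly diagonalized. First I would compute the generating operator $J$ of the admissible pair $\mathscr{H}:=[H^{(s_0)}(\ZZ^n),H^{(s_1)}(\ZZ^n)]$. By remark~\ref{R:weighted}, $H^{(s)}(\ZZ^n)=\ell^2_{\langle k\rangle^{2s}}(\ZZ^n)$, so the embedding $H^{(s_1)}\hookrightarrow H^{(s_0)}$ is continuous and dense (the density is the point recalled just before the statement, via remark~\ref{R:dense-sobolev}), and a direct check shows that the multiplication operator $(Ju)(k):=\langle k\rangle^{s_1-s_0}u(k)$ is positive self-adjoint in $H^{(s_0)}(\ZZ^n)$ with $\dom(J)=H^{(s_1)}(\ZZ^n)$ and $\|Ju\|_{H^{(s_0)}}=\|u\|_{H^{(s_1)}}$; by the uniqueness statement in section~\ref{SS-2-1}, this is \emph{the} generating operator.

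For the implication (ii)$\Rightarrow$(i), given $\varphi\in RO$ satisfying~(\ref{E:RO-3}) with the given $s_0,s_1$, I would set $\psi(t):=\varphi(t^{1/(s_1-s_0)})\cdot t^{-s_0/(s_1-s_0)}$ for $t>0$, so that $\psi\bigl(\langle k\rangle^{s_1-s_0}\bigr)=\varphi(\langle k\rangle)\langle k\rangle^{-s_0}$. The inequalities~(\ref{E:RO-3}) translate into $\psi(t)\asymp$ a function that is bounded on compact subintervals of $(0,\infty)$ and whose reciprocal is bounded near $+\infty$, i.e.\ (a function equivalent to one) in the class $\mathcal B$; moreover the two-sided power bounds in~(\ref{E:RO-3}) are exactly what is needed for $\psi$ to be a \emph{pseudoregular} function, hence an interpolation parameter by the Mikhailets–Murach criterion (the relevant statement should be among the auxiliary results of section~\ref{S:S-2}). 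Then $\psi(J)$ is multiplication by $\varphi(\langle k\rangle)\langle k\rangle^{-s_0}$ on $H^{(s_0)}(\ZZ^n)$, and a one-line computation of $\|u\|_{\mathscr H_\psi}=\|\psi(J)u\|_{H^{(s_0)}}=\|\varphi(\langle k\rangle)u\|$ identifies $\mathscr H_\psi=H^{\varphi}(\ZZ^n)$ with equality of norms. Since $\psi$ is an interpolation parameter, $\mathscr H_\psi$ is an interpolation space with respect to the pair (this is recorded in section~\ref{SS-2-1}, together with the continuous dense embeddings $H^{(s_1)}\hookrightarrow H^\varphi\hookrightarrow H^{(s_0)}$), which gives (i).

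For (i)$\Rightarrow$(ii), I would invoke the general description of interpolation spaces between two Hilbert spaces. Since $J$ above has purely the spectrum $\{\langle k\rangle^{s_1-s_0}:k\in\ZZ^n\}$, which is a countable subset of $[1,\infty)$ accumulating only at $+\infty$, the pair is a ``discrete'' pair; the abstract theorem (theorem 1.8 / the analogue of theorem 1 of~\cite{MM-13}, or theorem 2.4.2 in~\cite{BL-book} combined with the Hilbert-space refinement in~\cite{MM-14}) says that every interpolation space $\mathscr S$ with respect to $[\mathscr H_0,\mathscr H_1]$ is, up to norm equivalence, of the form $\mathscr H_\psi$ for some interpolation parameter $\psi\in\mathcal B$; more precisely, the norm of $\mathscr S$ is equivalent to $\|f(J)\cdot\|_{H^{(s_0)}}$ for a Borel function $f$, and Remark~\ref{R-rem-int} applied to the diagonal operators $T=$ multiplication by indicator-type weights forces $f$ to satisfy two-sided submultiplicativity estimates, i.e.\ $f(t)$ composed back as $\varphi(\langle k\rangle):=f(\langle k\rangle^{s_1-s_0})\langle k\rangle^{s_0}$ yields a function $\varphi\in RO$; tracking constants shows $\varphi$ obeys~(\ref{E:RO-3}) with precisely these $s_0,s_1$. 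Undoing the substitution as in the previous paragraph gives $\mathscr S=H^\varphi(\ZZ^n)$ up to norm equivalence.

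The main obstacle is the extraction of the $RO$-property of $\varphi$ (equivalently, that the abstract interpolation function $f$ satisfies the two-sided power bounds) in the direction (i)$\Rightarrow$(ii): this is where one must use condition (ii) of the definition in section~\ref{SS-1-11} quantitatively, via Remark~\ref{R-rem-int}, testing against a well-chosen family of bounded diagonal operators on the weighted $\ell^2$-spaces to pin down how $f(\lambda t)/f(t)$ can grow. One subtlety specific to $\ZZ^n$ (as opposed to $\RR^n$) is that the spectrum of $J$ is discrete rather than a full interval, so a priori $f$ is only constrained on the set $\{\langle k\rangle^{s_1-s_0}\}$; but because this set is relatively dense on a logarithmic scale (consecutive values of $\langle k\rangle$ have ratio tending to $1$), the $RO$-condition~(\ref{E:RO-1}) — which only asks for control of $\varphi(\lambda t)/\varphi(t)$ for $\lambda$ in a \emph{fixed} interval $[1,a]$ — can still be arranged, possibly after replacing $\varphi$ by an equivalent function interpolated between its values on the spectrum. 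I expect this density-of-the-spectrum point to be the one genuinely new ingredient compared to the $\RR^n$ proof, and I would isolate it as a short lemma.
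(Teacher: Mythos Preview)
Your treatment of the implication (ii)$\Rightarrow$(i) and your identification of the generating operator $J$ as multiplication by $\langle k\rangle^{s_1-s_0}$ are essentially the paper's argument (proposition~\ref{L-3} together with proposition~\ref{L-4}); the one-line norm computation $\|\psi(J)u\|_{H^{(s_0)}}=\|\varphi(\langle k\rangle)u\|$ is exactly what the paper does.

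The direction (i)$\Rightarrow$(ii), however, is where your proposal diverges and creates an unnecessary difficulty. The paper does \emph{not} try to extract the $RO$-bounds for $\varphi$ by testing diagonal multiplication operators and invoking remark~\ref{R-rem-int}. Instead it quotes Ovchinnikov's abstract theorem (proposition~\ref{L-2}, i.e.\ theorem~11.4.1 in~\cite{O-84}): for any admissible Hilbert pair, every interpolation space equals $\mathscr{H}_\psi$ for some $\psi\in\mathcal{B}$ that is \emph{pseudoconcave in a neighborhood of $\infty$}. The point is that this $\psi$ is handed to you globally on $(0,\infty)$, not merely on the spectrum of $J$. Pseudoconcavity of $\psi$ is then converted, via the purely function-theoretic proposition~\ref{L-3-b} (theorem~4.2 in~\cite{MM-15}), into the $RO$-condition~(\ref{E:RO-3}) for $\varphi(t):=t^{s_0}\psi(t^{s_1-s_0})$, valid for all $t\geq 1$. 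Finally proposition~\ref{L-3} identifies $\mathscr{H}_\psi=H^{\varphi}(\ZZ^n)$.

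Consequently, the ``subtlety specific to $\ZZ^n$'' that you flag --- that the spectrum of $J$ is discrete, so $f$ is only constrained on $\{\langle k\rangle^{s_1-s_0}\}$ --- simply does not arise: Ovchinnikov's theorem already supplies a pseudoconcave representative, and the conversion to $RO$ is done at the level of functions on $[1,\infty)$, not at spectral points. Your proposed ``density-of-the-spectrum'' lemma is therefore not needed, and the attempt to re-derive submultiplicativity of $f$ by testing indicator-type multipliers is a detour that would indeed run into the discreteness obstacle you describe. Replace that paragraph with a direct appeal to propositions~\ref{L-2} and~\ref{L-3-b} and the proof is complete.
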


For future reference, we recall the definition of an interpolation space with respect to a scale of Hilbert spaces. Let $\{\mathscr{H}_{s}\colon s\in\RR\}$ be a scale of Hilbert spaces such that there is continuous embedding $\mathscr{H}_{s_1}\hookrightarrow\mathscr{H}_{s_0}$ for all $s_0<s_1$. We say that a Hilbert space $\mathscr{S}$ is an \emph{interpolation space with respect to the scale} $\{\mathscr{H}_{s}\colon s\in\RR\}$ if there exist numbers $s_0<s_1$ such that $\mathscr{S}$ is an interpolation space with respect to the pair $[\mathscr{H}_{s_0},\mathscr{H}_{s_1}]$.

\begin{corollary} \label{C:cor-1} The following are equivalent:
\begin{enumerate}
  \item [(i)] A Hilbert space $\mathscr{S}$ is an interpolation space with respect to the scale\\
  $\{H^{(s)}(\ZZ^n)\colon s\in\RR\}$.
  \item [(ii)] Up to norm equivalence, we have  $\mathscr{S}=H^{\varphi}(\ZZ^n)$, for some function $\varphi\in RO$.
\end{enumerate}
\end{corollary}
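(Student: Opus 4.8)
The plan is to derive Corollary~\ref{C:cor-1} from Theorem~\ref{T:main-1} by unwinding the definition of ``interpolation space with respect to a scale.'' The forward direction (i)$\Rightarrow$(ii) is immediate: if $\mathscr{S}$ is an interpolation space with respect to the scale $\{H^{(s)}(\ZZ^n)\colon s\in\RR\}$, then by definition there exist $s_0<s_1$ such that $\mathscr{S}$ is an interpolation space with respect to the pair $[H^{(s_0)}(\ZZ^n),H^{(s_1)}(\ZZ^n)]$. Theorem~\ref{T:main-1} then gives $\mathscr{S}=H^{\varphi}(\ZZ^n)$ up to norm equivalence for some $\varphi\in RO$ satisfying~(\ref{E:RO-3}); in particular $\varphi\in RO$, which is all (ii) asserts.

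For the converse (ii)$\Rightarrow$(i), suppose $\mathscr{S}=H^{\varphi}(\ZZ^n)$ up to norm equivalence with $\varphi\in RO$. The key point is that \emph{every} $\varphi\in RO$ satisfies a condition of the form~(\ref{E:RO-3}): indeed, by the discussion in section~\ref{SS-1-8}, the defining property~(\ref{E:RO-1}) is equivalent to~(\ref{E:RO-2}), which is~(\ref{E:RO-3}) after the substitution $\lambda=\tau/t$, for some real numbers $s_0\leq s_1$ and $c\geq 1$. Thus $\varphi$ automatically satisfies~(\ref{E:RO-3}) with some pair of exponents. If $s_0<s_1$ we may invoke Theorem~\ref{T:main-1} directly with that pair to conclude $\mathscr{S}$ is an interpolation space with respect to $[H^{(s_0)}(\ZZ^n),H^{(s_1)}(\ZZ^n)]$, hence with respect to the scale. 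If it happens that the only exponents we obtain satisfy $s_0=s_1$ (which can occur, e.g.\ for $\varphi(t)=t^s$), we simply enlarge the interval: since~(\ref{E:RO-3}) with exponents $(s_0,s_1)$ clearly implies~(\ref{E:RO-3}) with exponents $(s_0-1,s_1+1)$, we may always arrange a strict inequality $s_0-1<s_1+1$ and apply Theorem~\ref{T:main-1} with the enlarged pair. This produces numbers $s_0'<s_1'$ with $\mathscr{S}$ an interpolation space for $[H^{(s_0')}(\ZZ^n),H^{(s_1')}(\ZZ^n)]$, which is precisely the definition of being an interpolation space with respect to the scale.

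There is no substantive obstacle here; the corollary is essentially a bookkeeping consequence of Theorem~\ref{T:main-1} together with the observation that the ``$\varphi$ satisfying~(\ref{E:RO-3})'' clause in Theorem~\ref{T:main-1}(ii) is in fact vacuous for the scale formulation, because membership in $RO$ already guarantees such a representation (with room to spare after enlarging the exponent gap). The only mild care needed is the edge case $s_0=s_1$, handled by the trivial monotonicity of condition~(\ref{E:RO-3}) in the exponents noted above. The continuous embeddings $H^{(s_1)}(\ZZ^n)\hookrightarrow H^{(s_0)}(\ZZ^n)$ for $s_0<s_1$, needed so that the pairs in question are genuine admissible pairs, were already recalled before Theorem~\ref{T:main-1}.
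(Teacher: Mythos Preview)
Your proof is correct and follows essentially the same approach as the paper: both directions reduce immediately to Theorem~\ref{T:main-1}, with the only work being to observe that any $\varphi\in RO$ satisfies~(\ref{E:RO-3}) for some strictly ordered pair $s_0<s_1$. The paper accomplishes this last point by invoking the Matuszewska indices and choosing $s_0<\sigma_0(\varphi)$, $s_1>\sigma_1(\varphi)$, whereas you take the exponents coming from the equivalence~(\ref{E:RO-1})$\Leftrightarrow$(\ref{E:RO-3}) and enlarge them if necessary; these are interchangeable maneuvers.
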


\subsection{Second Interpolation Result}\label{SS-1-13} The result below captures the implication (ii)$\implies$(i) of theorem~\ref{T:main-1} in a more explicit form. For an analogous result in the context of a bounded domain $\Omega \subset\RR^n$ with Lipschitz boundary, see theorem 5.1 of~\cite{MM-15}.
\begin{theorem}\label{T:main-2}  Let $\varphi\in RO$, $s_0<\sigma_0(\varphi)$, and $s_1>\sigma_1(\varphi)$. Define $\psi$ as follows:
\begin{equation}\label{E:phi-psi-inverse}
\psi(t):=\left\{\begin{array}{cc}
                  \tau^{-s_0/(s_1-s_0)}\varphi(\tau^{1/(s_1-s_0)}), & \tau\geq 1, \\
                  \varphi(1), & 0<\tau<1,
                \end{array}\right.
\end{equation}
Then, $\psi\in\mathcal{B}$ and $\psi$ is an interpolation parameter. Furthermore, with equality of norms, we have
\begin{equation*}
[H^{(s_0)}(\ZZ^n), H^{(s_1)}(\ZZ^n)]_{\psi}=H^{\varphi}(\ZZ^n).
\end{equation*}
\end{theorem}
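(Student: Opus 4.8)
The plan is to reduce everything to an explicit computation with the diagonal (multiplication) operators that generate these spaces. First I would identify the generating operator $J$ of the admissible pair $[H^{(s_0)}(\ZZ^n), H^{(s_1)}(\ZZ^n)]$. Since $H^{(s)}(\ZZ^n)$ is (Remark~\ref{R:weighted}) the weighted space $\ell^2_{\langle k\rangle^{2s}}(\ZZ^n)$, the isometry $u\mapsto \langle k\rangle^{s_0}u$ carries $H^{(s_0)}(\ZZ^n)$ onto $\ell^2(\ZZ^n)$ and $H^{(s_1)}(\ZZ^n)$ onto $\ell^2_{\langle k\rangle^{2(s_1-s_0)}}(\ZZ^n)$; thus, after this unitary change, the generating operator is multiplication by $\langle k\rangle^{s_1-s_0}$. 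Transporting back, $J$ is the (positive, self-adjoint, diagonal) operator $Ju = \langle k\rangle^{s_1-s_0}u$ acting in $H^{(s_0)}(\ZZ^n)$, with $\dom(J)=H^{(s_1)}(\ZZ^n)$; one checks $\|Ju\|_{H^{(s_0)}}=\|u\|_{H^{(s_1)}}$ directly. Its spectrum is the countable set $\{\langle k\rangle^{s_1-s_0}:k\in\ZZ^n\}\subset[1,\infty)$.

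Next I would verify $\psi\in\mathcal{B}$: since $\varphi\in RO$ is, by Proposition~1 of~\cite{MM-13} (quoted in Section~\ref{SS-1-8}), bounded and separated from zero on every $[1,b]$, the function $\tau\mapsto \tau^{-s_0/(s_1-s_0)}\varphi(\tau^{1/(s_1-s_0)})$ is Borel measurable, positive, bounded on compact subintervals of $(0,\infty)$; and the two-sided bound~(\ref{E:RO-3}) for $\varphi$ (valid with the chosen $s_0<\sigma_0(\varphi)$, $s_1>\sigma_1(\varphi)$, so that the leftmost exponent in~(\ref{E:RO-3}) can be taken strictly positive and the rightmost too after the substitution $\lambda^{s_1-s_0}$) shows $1/\psi$ is bounded on $(c,\infty)$. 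Then I would compute $\psi(J)$ by spectral calculus: on the spectral value $\langle k\rangle^{s_1-s_0}$, with $\tau=\langle k\rangle^{s_1-s_0}\ge 1$, we get $\psi(\langle k\rangle^{s_1-s_0}) = \langle k\rangle^{-s_0}\varphi(\langle k\rangle)$, so $\psi(J)$ is multiplication by $\langle k\rangle^{-s_0}\varphi(\langle k\rangle)$ on $H^{(s_0)}(\ZZ^n)$. Hence
\[
\|u\|_{\mathscr{H}_\psi}^2 = \|\psi(J)u\|_{H^{(s_0)}}^2 = \sum_{k\in\ZZ^n}\langle k\rangle^{2s_0}\,\langle k\rangle^{-2s_0}[\varphi(\langle k\rangle)]^2\,|u(k)|^2 = \sum_{k\in\ZZ^n}[\varphi(\langle k\rangle)]^2|u(k)|^2 = \|u\|_{H^{\varphi}(\ZZ^n)}^2,
\]
which is precisely equality of norms $[H^{(s_0)}(\ZZ^n),H^{(s_1)}(\ZZ^n)]_\psi = H^{\varphi}(\ZZ^n)$, including equality of domains since $\dom(\psi(J))$ is exactly the set of $u$ for which the last sum is finite.

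It remains to show $\psi$ is an interpolation parameter. This is the step I expect to be the main obstacle, and the plan is to invoke an abstract criterion rather than argue from scratch: by the interpolation theorem with a function parameter (e.g.\ the results recalled in Section~\ref{S:S-2}, going back to the Mikhailets--Murach framework, cf.\ Section~1.1.1 of~\cite{MM-14}), a function $\psi\in\mathcal{B}$ is an interpolation parameter provided it is (pseudo)concave on $(0,\infty)$, or more precisely provided $\psi$ is $RO$-varying / equivalent to the restriction to the spectrum of such a function together with the relevant O-regular-variation bound; the decisive point is that the two-sided power bound~(\ref{E:RO-3}) for $\varphi$, with the \emph{strict} inequalities $s_0<\sigma_0(\varphi)$ and $s_1>\sigma_1(\varphi)$, translates after the change of variable $\tau\mapsto\tau^{1/(s_1-s_0)}$ and multiplication by $\tau^{-s_0/(s_1-s_0)}$ into
\[
c^{-1}\,\lambda^{(\sigma_0(\varphi)-s_0)/(s_1-s_0)} \le \frac{\psi(\lambda\tau)}{\psi(\tau)} \le c\,\lambda^{(\sigma_1(\varphi)-s_0)/(s_1-s_0)},\qquad \tau\ge 1,\ \lambda\ge 1,
\]
with \emph{both} exponents in the open interval $(0,1)$ (here using $s_0<\sigma_0\le\sigma_1<s_1$). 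A function satisfying such a two-sided bound with exponents in $(0,1)$ is, up to equivalence, increasing and pseudoconcave, and hence is an interpolation parameter by the cited abstract criterion. I would carry out this verification carefully, also handling the interval $0<\tau<1$ where $\psi$ is the positive constant $\varphi(1)$ (which does not affect the interpolation-parameter property since only the behaviour of $\psi$ on the spectrum of $J$, which lies in $[1,\infty)$, enters, and on any compact set $\psi$ is bounded and bounded away from $0$). Once $\psi$ is known to be an interpolation parameter, the continuous dense embeddings $H^{(s_1)}\hookrightarrow H^{\varphi}\hookrightarrow H^{(s_0)}$ follow from the general properties recorded in Section~\ref{SS-2-1}, completing the proof.
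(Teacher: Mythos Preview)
Your proposal is correct and follows essentially the same route as the paper: the paper packages the identification of the generating operator $J=\langle k\rangle^{s_1-s_0}$ and the norm computation into Proposition~\ref{L-3}, and defers the verification that $\psi\in\mathcal{B}$ is an interpolation parameter to Proposition~\ref{L-4} (cited from~\cite{MM-15}), whereas you unpack both steps explicitly (including the pseudoconcavity argument via the two-sided power bound with exponents in $(0,1)$). One minor imprecision: in your displayed inequality for $\psi(\lambda\tau)/\psi(\tau)$ the exponents should be $(s_0'-s_0)/(s_1-s_0)$ and $(s_1'-s_0)/(s_1-s_0)$ for some $s_0<s_0'<\sigma_0(\varphi)$ and $\sigma_1(\varphi)<s_1'<s_1$, since the Matuszewska indices are defined as sup/inf and need not themselves witness~(\ref{E:RO-3}); this does not affect the conclusion.
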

\subsection{Quadratic Interpolation}\label{SS-1-14} Our next result shows that $\{H^{\varphi}(\ZZ^n)\colon \varphi\in RO\}$ is closed with respect to interpolation with a function parameter. This result is an analogue of theorem 5.2 of~\cite{MM-15} for the class $\{H^{\varphi}(\Omega)\colon \varphi\in RO\}$ where $\Omega \subset\RR^n$ is a bounded domain with Lipschitz boundary.

\begin{theorem}\label{T:main-3}  Assume that $\varphi_0,\,\varphi_1\in RO$ and $\frac{\varphi_0}{\varphi_1}$ is bounded in a neighborhood of $\infty$. Let $\psi\in\mathcal{B}$ be an interpolation parameter. Then, the following hold:
\begin{enumerate}
  \item [(i)] $[H^{\varphi_0}(\ZZ^n), H^{\varphi_1}(\ZZ^n)]$ is an admissible pair;
  \item [(ii)] up to the norm equivalence we have
  \begin{equation}\label{E:t-3-main}
[H^{\varphi_0}(\ZZ^n), H^{\varphi_1}(\ZZ^n)]_{\psi}=H^{\varphi}(\ZZ^n),
\end{equation}
\end{enumerate}
where
\begin{equation}\label{E:quad-int}
\varphi(t):=\varphi_0(t)\psi\left(\frac{\varphi_1(t)}{\varphi_0(t)}\right).
\end{equation}
\end{theorem}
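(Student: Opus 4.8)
\textbf{Proof proposal for Theorem~\ref{T:main-3}.}

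The plan is to reduce everything to the diagonal/weighted-$\ell^2$ picture provided by Remark~\ref{R:weighted} and the observation that all spaces in sight are weighted $\ell^2(\ZZ^n)$-spaces with strictly positive weights. First I would verify part (i): since $\varphi_0,\varphi_1\in RO$, the ratio $\varphi_1/\varphi_0$ is, by~(\ref{E:RO-1}), bounded and separated from zero on every interval $[1,b]$, and the hypothesis says $\varphi_0/\varphi_1$ is bounded near $\infty$; hence $[\varphi_0(\langle k\rangle)/\varphi_1(\langle k\rangle)]^2$ is bounded above uniformly in $k\in\ZZ^n$, which gives the continuous embedding $H^{\varphi_1}(\ZZ^n)\hookrightarrow H^{\varphi_0}(\ZZ^n)$. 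Density of this embedding follows because $S(\ZZ^n)$ is dense in each $H^{\varphi}(\ZZ^n)$ (the same one-line argument as in Remark~\ref{R:dense-sobolev}: finitely supported functions are dense, being a core for every such diagonal multiplication operator); so $[H^{\varphi_0}(\ZZ^n),H^{\varphi_1}(\ZZ^n)]$ is an admissible pair.

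Next I would identify the generating operator $J$ of this admissible pair. Working in the Hilbert space $\mathscr{H}_0=H^{\varphi_0}(\ZZ^n)$, multiplication by the function $k\mapsto \varphi_1(\langle k\rangle)/\varphi_0(\langle k\rangle)$ is a positive self-adjoint operator whose domain is exactly $H^{\varphi_1}(\ZZ^n)$ and which is isometric from that domain onto... more precisely, one checks directly from~(\ref{E:sob-phi-2}) that $\|Ju\|_{\mathscr{H}_0}=\|u\|_{\mathscr{H}_1}$. By the uniqueness of the generating operator (section 1.1.1 of~\cite{MM-14}), this multiplication operator \emph{is} $J$. Then, for $\psi\in\mathcal{B}$, spectral calculus for this diagonal operator is completely explicit: $\psi(J)$ is multiplication by $k\mapsto \psi\!\left(\varphi_1(\langle k\rangle)/\varphi_0(\langle k\rangle)\right)$, and consequently
\begin{equation*}
\|u\|_{\mathscr{H}_\psi}^2=\sum_{k\in\ZZ^n}\big[\varphi_0(\langle k\rangle)\big]^2\,\psi\!\left(\frac{\varphi_1(\langle k\rangle)}{\varphi_0(\langle k\rangle)}\right)^2|u(k)|^2=\sum_{k\in\ZZ^n}\big[\varphi(\langle k\rangle)\big]^2|u(k)|^2=\|u\|_{H^{\varphi}(\ZZ^n)}^2,
\end{equation*}
with $\varphi$ as in~(\ref{E:quad-int}). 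This is even an equality of norms on $\dom(\psi(J))$, so the only remaining point is that the \emph{set} $[H^{\varphi_0}(\ZZ^n),H^{\varphi_1}(\ZZ^n)]_\psi=\dom(\psi(J))$ coincides, as a set, with $H^{\varphi}(\ZZ^n)$; but that is immediate from the displayed formula once one knows $\psi(J)$ acts by this multiplier. One should also record that $\varphi\in RO$: this is where the hypotheses $\varphi_0,\varphi_1\in RO$, $\psi\in\mathcal{B}$ \emph{interpolation parameter}, and $\varphi_0/\varphi_1$ bounded near $\infty$ are used together — $\psi$ restricted to the range of $\varphi_1/\varphi_0$ (a subset of some $(0,b]$, hence bounded away from $0$ and $\infty$ on compacta) combines with the $RO$-property of the $\varphi_j$ to yield~(\ref{E:RO-1}) for $\varphi$; alternatively one invokes the $\RR^n$-version, Theorem 5.2 of~\cite{MM-15}, or the corresponding statement in~\cite{MM-14}, whose proof that $\varphi\in RO$ is purely about the function $\varphi$ and transfers verbatim. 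The phrase ``up to norm equivalence'' in the statement then allows for the harmless possibility that one prefers to reconstruct $\varphi$ only up to the ambiguity inherent in the $RO$-class rather than tracking the exact constant $\varphi_0(1)$-type boundary adjustments.

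The main obstacle, such as it is, is purely bookkeeping: making sure that the multiplication operator $k\mapsto\varphi_1(\langle k\rangle)/\varphi_0(\langle k\rangle)$ really is \emph{self-adjoint} (not merely symmetric) on the natural domain inside $H^{\varphi_0}(\ZZ^n)$, and that its domain is \emph{exactly} $H^{\varphi_1}(\ZZ^n)$ rather than something larger or smaller — both follow from the standard theory of multiplication operators on weighted $\ell^2$ spaces, but need the lower bound $\varphi_1/\varphi_0\geq \delta>0$ on each sphere $\{|k|=R\}$, which in turn uses that $\varphi_1\in RO$ is separated from zero on bounded intervals. A secondary subtlety is verifying $\psi(J)$ is given by the pointwise multiplier claimed: since $J$ is a (bounded or unbounded) multiplication operator, its spectral measure is supported on the closure of the range of the multiplier symbol and $\psi(J)$ acts by $\psi$ composed with the symbol, which is legitimate precisely because $\psi\in\mathcal{B}$ is Borel and bounded on compacta and $1/\psi$ bounded near $\infty$ — exactly the conditions guaranteeing $\psi(J)$ is densely defined and self-adjoint. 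Once these two checks are in place, parts (i) and (ii) both fall out of the explicit diagonalization, and the proof is complete. $\qed$
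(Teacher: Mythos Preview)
Your proof is correct and takes a genuinely different route from the paper's. The paper proceeds indirectly: it fixes $s_0<\sigma_0(\varphi_j)$, $s_1>\sigma_1(\varphi_j)$ for $j=0,1$, writes each $H^{\varphi_j}(\ZZ^n)$ as $[H^{(s_0)}(\ZZ^n),H^{(s_1)}(\ZZ^n)]_{\lambda_j}$ via theorem~\ref{T:main-2}, and then applies an abstract reiteration theorem (theorem 1.3 of~\cite{MM-14}) to the pair $[\mathscr{H}_{\lambda_0},\mathscr{H}_{\lambda_1}]$ with $\mathscr{H}=[H^{(s_0)}(\ZZ^n),H^{(s_1)}(\ZZ^n)]$; this yields $[H^{\varphi_0}(\ZZ^n),H^{\varphi_1}(\ZZ^n)]_\psi=\mathscr{H}_\omega$ for a certain $\omega$, and one then checks $\varphi(t)=t^{s_0}\omega(t^{s_1-s_0})$ and invokes proposition~\ref{L-3} once more. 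By contrast, you bypass reiteration entirely and compute the generating operator $J$ of the pair $[H^{\varphi_0}(\ZZ^n),H^{\varphi_1}(\ZZ^n)]$ directly as multiplication by $\varphi_1(\langle k\rangle)/\varphi_0(\langle k\rangle)$, exactly as the paper does in proposition~\ref{L-3} for the power-scale case; your argument is thus the natural extension of that proposition to arbitrary $\varphi_j\in RO$. What your approach buys is simplicity and, in fact, equality of norms rather than mere equivalence; what the paper's approach buys is modularity (everything is reduced to the already-proved theorem~\ref{T:main-2} plus an off-the-shelf abstract result), and the verification that $\varphi\in RO$ drops out cleanly from proposition~\ref{L-3-b} applied to $\omega$, whereas in your argument that step is handled by citing the $\RR^n$-analogue. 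One small point worth tightening: your identification $\dom(\psi(J))=H^\varphi(\ZZ^n)$ as \emph{sets} implicitly uses $H^\varphi(\ZZ^n)\subseteq H^{\varphi_0}(\ZZ^n)$, which follows since $\varphi_1/\varphi_0$ is bounded below by some $c'>0$ and $1/\psi$ is bounded on $(c',\infty)$ by the definition of $\mathcal{B}$; this is true but deserves a sentence.
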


\subsection{Further Properties of $H^{\varphi}(\ZZ^n)$}\label{SS-1-15} The next theorem summarizes additional properties of the class $\{H^{\varphi}(\ZZ^n)\colon \varphi\in RO\}$. In the setting of $\RR^n$, analogous properties were demonstrated in proposition 2 of~\cite{MM-13}.

\begin{theorem}\label{T:main-4} We have the following properties:
\begin{enumerate}

\item [(i)] Assume that $\varphi\in RO$. Then $\mathcal{S}(\ZZ^n)$ is dense in $H^{\varphi}(\ZZ^n)$.

\item [(ii)] Assume that $\varphi_0,\,\varphi_1\in RO$ and $\frac{\varphi_0}{\varphi_1}$ is bounded in a neighborhood of $\infty$. Then we have a continuous embedding $H^{\varphi_1}(\ZZ^n)\hookrightarrow H^{\varphi_0}(\ZZ^n)$.

\item [(iii)] Assume that $\varphi\in RO$. Then, the sesquilinear form~(\ref{E:inner-l-2}) extends to a sesquilinear duality (separately continuous sesquilinear form)
\begin{equation}\label{E:inner-product-h-phi}
(\cdot,\cdot)\colon H^{\varphi}(\ZZ^n)\times H^{\frac{1}{\varphi}}(\ZZ^n)\to\mathbb{C}.
\end{equation}
The spaces $H^{\varphi}(\ZZ^n)$ and $H^{\frac{1}{\varphi}}(\ZZ^n)$ are mutually dual relative to~(\ref{E:inner-product-h-phi}).
\item [(iv)] Assume that $\varphi\in RO$ satisfies
\begin{equation}\label{E:sum-phi-hyp}
\sum_{k\in\ZZ^n}\frac{1}{[\varphi(\langle k\rangle)]^2}<\infty,
\end{equation}
where $\langle k\rangle$ is as in~(\ref{sob-mult}).

Then, we have a continuous embedding $H^{\varphi}(\ZZ^n)\hookrightarrow \ell^{\infty}(\ZZ^n)$.
\end{enumerate}
\end{theorem}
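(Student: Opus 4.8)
The plan is to reduce everything to elementary facts about the weighted space $\ell^2$ via the observation (Remark~\ref{R:weighted}) that for $\varphi\in RO$ the map $U_\varphi\colon u\mapsto\varphi(\langle\cdot\rangle)u$ is a unitary isomorphism of $H^{\varphi}(\ZZ^n)$ onto $\ell^2(\ZZ^n)$, with inverse $g\mapsto g/\varphi(\langle\cdot\rangle)$ (which indeed takes values in $S'(\ZZ^n)$, since $g$ is bounded and $1/\varphi$ is polynomially bounded by~(\ref{E:RO-3})). The only recurring technical point is the treatment of small values of $\langle k\rangle$, handled by the fact (recalled after~(\ref{E:RO-1})) that every $\varphi\in RO$ is bounded and separated from zero on each interval $[1,b]$. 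With this in hand, part (ii) is immediate: since $\varphi_0/\varphi_1$ is bounded near $\infty$ and both functions are bounded above and bounded below away from zero on any bounded interval, there is a constant $C$ with $\varphi_0(\langle k\rangle)\le C\varphi_1(\langle k\rangle)$ for all $k\in\ZZ^n$, whence $\|u\|_{H^{\varphi_0}}\le C\|u\|_{H^{\varphi_1}}$, giving both the inclusion of sets and the continuity of the embedding.

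For (i), finitely supported functions lie in $\mathcal{S}(\ZZ^n)$ and are dense in $\ell^2(\ZZ^n)$; since $U_\varphi$ maps finitely supported functions bijectively onto finitely supported functions, they are dense in $H^{\varphi}(\ZZ^n)$ as well. Moreover $\mathcal{S}(\ZZ^n)\subset H^{\varphi}(\ZZ^n)$, because for $u\in\mathcal{S}(\ZZ^n)$ one has $|u(k)|\le C_N\langle k\rangle^{-N}$ for every $N$, while $\varphi(\langle k\rangle)\le c\,\varphi(1)\langle k\rangle^{\sigma_1(\varphi)+1}$ by~(\ref{E:RO-3}); choosing $N$ large makes $\varphi(\langle\cdot\rangle)u\in\ell^2(\ZZ^n)$. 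Hence $\mathcal{S}(\ZZ^n)$ is dense in $H^{\varphi}(\ZZ^n)$. (Alternatively one could combine Remark~\ref{R:dense-sobolev} with Theorem~\ref{T:main-2} and the dense embedding $H^{(s_1)}(\ZZ^n)\hookrightarrow H^{\varphi}(\ZZ^n)$ coming from the interpolation property.) For (iv), if $u\in H^{\varphi}(\ZZ^n)$ then for each fixed $k$ we have $[\varphi(\langle k\rangle)]^2|u(k)|^2\le\|u\|_{H^{\varphi}}^2$, so
\[
\|u\|_{\infty}^2=\sup_{k\in\ZZ^n}|u(k)|^2\le\Big(\sup_{k\in\ZZ^n}[\varphi(\langle k\rangle)]^{-2}\Big)\|u\|_{H^{\varphi}}^2\le\Big(\sum_{k\in\ZZ^n}[\varphi(\langle k\rangle)]^{-2}\Big)\|u\|_{H^{\varphi}}^2,
\]
which is finite by~(\ref{E:sum-phi-hyp}), giving the continuous embedding $H^{\varphi}(\ZZ^n)\hookrightarrow\ell^{\infty}(\ZZ^n)$.

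Part (iii) is the step that requires the most care, and is where I expect the main (albeit not deep) obstacle to lie — chiefly bookkeeping with conjugate-linearity and with the meaning of ``extends''. First note $1/\varphi\in RO$: it is Borel measurable and $(1/\varphi)(\lambda t)/(1/\varphi)(t)=\varphi(t)/\varphi(\lambda t)$, which still satisfies~(\ref{E:RO-1}); thus $H^{1/\varphi}(\ZZ^n)$ is defined. For $u\in H^{\varphi}(\ZZ^n)$ and $v\in H^{1/\varphi}(\ZZ^n)$, the Cauchy--Schwarz inequality gives
\[
\Big|\sum_{k\in\ZZ^n}u(k)\overline{v(k)}\Big|\le\|\varphi(\langle\cdot\rangle)u\|\,\|\varphi(\langle\cdot\rangle)^{-1}v\|=\|u\|_{H^{\varphi}}\,\|v\|_{H^{1/\varphi}},
\]
so the series converges absolutely and defines a jointly (hence separately) continuous sesquilinear form that coincides with~(\ref{E:inner-l-2}) on $\mathcal{S}(\ZZ^n)$, a space dense in both factors by part (i); this is the asserted extension.

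It remains to prove the mutual duality. Given $f\in(H^{\varphi}(\ZZ^n))'$, the Riesz representation theorem in the Hilbert space $H^{\varphi}(\ZZ^n)$ yields $w\in H^{\varphi}(\ZZ^n)$ with $f(u)=(u,w)_{H^{\varphi}(\ZZ^n)}$ for all $u$ and $\|w\|_{H^{\varphi}}=\|f\|$. Setting $v(k):=[\varphi(\langle k\rangle)]^2w(k)$ and using that $\varphi$ is real-valued, one checks $\|v\|_{H^{1/\varphi}}=\|w\|_{H^{\varphi}}=\|f\|<\infty$, so $v\in H^{1/\varphi}(\ZZ^n)$, and $f(u)=\sum_{k}u(k)\overline{v(k)}$ for every $u\in H^{\varphi}(\ZZ^n)$; conversely, by the bound above, each $v\in H^{1/\varphi}(\ZZ^n)$ produces such a functional of norm $\le\|v\|_{H^{1/\varphi}}$ (and in fact equal to it, by testing against $U_\varphi^{-1}U_\varphi$-adjusted elements). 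This identifies $(H^{\varphi}(\ZZ^n))'$ with $H^{1/\varphi}(\ZZ^n)$ through the form; applying the same argument with $\varphi$ replaced by $1/\varphi$, and using $1/(1/\varphi)=\varphi$, identifies $(H^{1/\varphi}(\ZZ^n))'$ with $H^{\varphi}(\ZZ^n)$, which is precisely the claimed mutual duality (compare the parallel discussion of $H^{(s)}(\ZZ^n)$ and $H^{(-s)}(\ZZ^n)$ in the appendix).
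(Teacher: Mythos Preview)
Your proof is correct, and it takes a genuinely different route from the paper. The paper leans on the interpolation machinery developed earlier: for (i) it invokes Theorem~\ref{T:main-2} to get the dense embedding $H^{(s_1)}(\ZZ^n)\hookrightarrow H^{\varphi}(\ZZ^n)$ and then Remark~\ref{R:dense-sobolev}; for (ii) it cites Theorem~\ref{T:main-3} to conclude that $[H^{\varphi_0}(\ZZ^n),H^{\varphi_1}(\ZZ^n)]$ is an admissible pair; and for (iii) it first proves the $H^{(s)}$--$H^{(-s)}$ duality in the appendix (Lemma~\ref{L:pairing-sob}) and then transports it to $H^{\varphi}$ via the abstract duality-of-interpolation-spaces result (Theorem~1.4 of~\cite{MM-14}). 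For (iv) the paper uses a Cauchy--Schwarz estimate on the series $\sum|u(k)|$, whereas your pointwise bound is even shorter.

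Your approach, by contrast, works directly with the weighted-$\ell^2$ description of $H^{\varphi}(\ZZ^n)$ via the unitary $U_\varphi$: density of finitely supported functions for (i), a global bound on $\varphi_0/\varphi_1$ for (ii), and a bare-hands Riesz representation (essentially replicating the appendix argument with $\varphi$ in place of $t^s$) for (iii). This is more elementary and self-contained; in particular it shows that none of (i)--(iv) actually require the interpolation results, which is a feature specific to the discrete setting where $H^{\varphi}(\ZZ^n)$ is transparently a weighted $\ell^2$-space. The paper's route, on the other hand, mirrors the proofs in the $\RR^n$ and closed-manifold literature (where no such shortcut exists) and illustrates the interpolation framework in action. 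One small remark: in (iii) your phrase ``testing against $U_\varphi^{-1}U_\varphi$-adjusted elements'' is vague; it would be cleaner to note that equality of norms follows immediately from the Riesz construction, since $\|f\|=\|w\|_{H^{\varphi}}=\|v\|_{H^{1/\varphi}}$.
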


In our next result we establish a mapping property of pseudo-differential operators (PDO) acting on  $H^{\varphi}(\ZZ^n)$. Before stating this result, we recall some elements of  PDO calculus on $\ZZ^n$, as developed by the authors of~\cite{BRK-20}.

\subsection{Symbol Classes}\label{SS:s-2-2} We begin with the definition of the symbol class $\sml$, as introduced in~\cite{BRK-20}.
\begin{definition}\label{D-1} For $m\in\RR$, the notation $S^{m}(\ZZ^n\times \TT^n)$ indicates the set of functions $a\colon \ZZ^n\times \TT^n\to\mathbb{C}$ satisfying the following properties:
\begin{enumerate}
\item [(i)] for all $k\in \ZZ^n$, we have $a(k,\cdot)\in C^{\infty}(\TT^n)$;
\item [(ii)] for all $\alpha,\,\beta\in \NN_0^{n}$, there exists a constant $C_{\alpha,\beta}>0$ such that
\begin{equation*}
 |D_{x}^{(\beta)}\Delta^{\alpha}_{k}a(k,x)|\leq C_{\alpha,\beta}(1+|k|)^{m-|\alpha|},
\end{equation*}
for all $(k,x)\in \ZZ^n\times \TT^n$.
\end{enumerate}
\end{definition}

We new recall the definition of an elliptic symbol from~\cite{BRK-20}.

\begin{definition}\label{D-2}  For $m\in\RR$, the elliptic symbol class $ES^{m}(\ZZ^n\times \TT^n)$ refers to the set of functions $a\in S^{m}(\ZZ^n\times \TT^n)$  satisfying the following property: there exist constants $C>0$ and $R>0$ such that
\begin{equation*}
|a(k,x)|\geq C(1+|k|)^m,
\end{equation*}
for all $x\in\TT^n$ and all $k\in \ZZ^n$ such that $|k|>R$.
\end{definition}

An important ingredient in the definition of a PDO on $\ZZ^n$ is the discrete Fourier transform, which we describe next.

\subsection{Discrete Fourier Transform}\label{SS:s-2-4}
For $u\in \ell^1(\ZZ^n)$, its discrete Fourier transform $\widehat{u}(x)$ is a function of $x\in\TT^n$ defined as
\begin{equation*}
\widehat{u}(x):=\sum_{k\in\ZZ^n}e^{-2\pi i k\cdot x}u(k),
\end{equation*}
where $k\cdot x:=k_1x_1+k_2x_2+\dots +k_nx_n$.
It turns out that the discrete Fourier transform can be extended to $\ell^2(\ZZ^n)$, and by normalizing the Haar measure on $\ZZ^n$ and $\TT^n$, the Plancherel formula takes the following form:
\begin{equation*}
\sum_{k\in\ZZ^n}|u(k)|^2=\int_{\TT^n}|\widehat{u}(x)|^2\,dx.
\end{equation*}
The corresponding inversion formula looks as follows:
\begin{equation}\label{E:inv-ft}
u(k)=\int_{\TT^n}e^{2\pi i k\cdot x}\widehat{u}(x)\,dx,\qquad k\in\ZZ^n.
\end{equation}

\subsection{Pseudo-Differential Operator}\label{SS:s-2-5} The pseudo-differential operator $T_{a}$ corresponding to $a\in S^m(\ZZ^n\times \TT^n)$, also denoted as $\textrm{Op}[a]$, is defined as
\begin{equation}\label{E:op-a}
  (T_{a}u)(k):= \int_{\TT^n} e^{2\pi i k\cdot x}a(k,x)\widehat{u}(x)\,dx,\quad u\in \mathcal{S}(\ZZ^n).
\end{equation}

In proposition 3.15 of~\cite{DK-20}, the authors showed that the operator $T_{a}$ maps $\mathcal{S}(\ZZ^n)$ into $\mathcal{S}(\ZZ^n)$.
For a linear operator $P\colon \mathcal{S}(\ZZ^n)\to  \mathcal{S}(\ZZ^n)$, its  \emph{formal adjoint} $P^{\dagger}$ is defined using the following relation:
\begin{equation}\label{E:dual-1}
  (Pu,v)=(u, P^{\dagger} v),
\end{equation}
for all $u,\,v\in \mathcal{S}(\ZZ^n)$, where $(\cdot,\cdot)$ is as in~(\ref{E:inner-l-2}).

Finally, we note that the (linear) operator $T_{a}\colon \mathcal{S}(\ZZ^n)\to \mathcal{S}(\ZZ^n)$ extends to a (linear) operator
\begin{equation*}
T_{a}\colon S'(\ZZ^n)\to S'(\ZZ^n)
\end{equation*}
defined as follows:
\begin{equation*}
(T_{a}F)(\overline{u}):=F\left(\overline{T_{a}^{\dagger}u}\right), \qquad F\in \mathcal{S}'(\ZZ^n),\,\,u\in \mathcal{S}(\ZZ^n).
\end{equation*}
\noindent (Here, $T_{a}^{\dagger}$ is the formal adjoint of $T_{a}$ and $\overline{z}$ is the conjugate of $z\in\CC$.)

\subsection{Mapping Property of PDO on $\ZZ^n$}\label{SS-1-19} Our mapping property is an analogue of the property stated in section 4 of~\cite{MM-13} for the $\RR^n$-setting.

\begin{theorem}\label{T:main-5} Assume that $a\in S^{m}(\ZZ^n\times \TT^n)$, where $m\in\RR$. Let $\varphi\in RO$. Then $T_{a}$ extends to a bounded linear operator
\begin{equation}\nonumber
T_{a}\colon H^{\varphi}(\ZZ^n)\to H^{t^{-m}\varphi}(\ZZ^n),
\end{equation}
where $t^{-m}\varphi$ denotes the product of the functions $t^{-m}$ and $\varphi$.
\end{theorem}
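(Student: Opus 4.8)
The plan is to reduce the general statement to the classical boundedness result for PDOs between ordinary discrete Sobolev spaces (the property from~\cite{BRK-20} recalled in the introduction: $T_a\colon H^{(s)}(\ZZ^n)\to H^{(s-m)}(\ZZ^n)$ is bounded for every $s\in\RR$) and then to interpolate with a function parameter using Theorem~\ref{T:main-2}. First I would fix $\varphi\in RO$ and choose real numbers $s_0<\sigma_0(\varphi)$ and $s_1>\sigma_1(\varphi)$, so that the hypotheses of Theorem~\ref{T:main-2} are met. Applying the $\RR^n$-style boundedness result twice gives bounded linear operators $T_a\colon H^{(s_0)}(\ZZ^n)\to H^{(s_0-m)}(\ZZ^n)$ and $T_a\colon H^{(s_1)}(\ZZ^n)\to H^{(s_1-m)}(\ZZ^n)$. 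Since $s_0<s_1$ forces $s_0-m<s_1-m$, both $[H^{(s_0)}(\ZZ^n),H^{(s_1)}(\ZZ^n)]$ and $[H^{(s_0-m)}(\ZZ^n),H^{(s_1-m)}(\ZZ^n)]$ are admissible pairs (by the continuous dense embedding recalled in subsection~\ref{SS-1-12}), and I would take $\psi$ to be exactly the function built in~(\ref{E:phi-psi-inverse}) from $\varphi$ and the pair $s_0<s_1$; by Theorem~\ref{T:main-2} this $\psi$ is an interpolation parameter.

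The next step is to identify the two interpolated spaces. By Theorem~\ref{T:main-2} applied with the function $\varphi$ and exponents $s_0,s_1$ we get $[H^{(s_0)}(\ZZ^n),H^{(s_1)}(\ZZ^n)]_\psi = H^{\varphi}(\ZZ^n)$ with equality of norms. For the target side, I would apply Theorem~\ref{T:main-2} a second time, now with the function $\varphi_1 := t^{-m}\varphi$ in place of $\varphi$ and with exponents $s_0-m,s_1-m$ in place of $s_0,s_1$. One has to check that this is legitimate: $\varphi_1\in RO$ because multiplying an $RO$-varying function by the power $t^{-m}$ only shifts the Matuszewska indices, so $\sigma_0(\varphi_1)=\sigma_0(\varphi)-m$ and $\sigma_1(\varphi_1)=\sigma_1(\varphi)-m$, whence $s_0-m<\sigma_0(\varphi_1)$ and $s_1-m>\sigma_1(\varphi_1)$. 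A short computation then shows that the interpolation-parameter function attached to $(\varphi_1,s_0-m,s_1-m)$ via formula~(\ref{E:phi-psi-inverse}) is again exactly $\psi$: indeed $\tau^{-(s_0-m)/(s_1-s_0)}\varphi_1(\tau^{1/(s_1-s_0)}) = \tau^{-(s_0-m)/(s_1-s_0)}\tau^{-m/(s_1-s_0)}\varphi(\tau^{1/(s_1-s_0)}) = \tau^{-s_0/(s_1-s_0)}\varphi(\tau^{1/(s_1-s_0)})$, and the exponent denominators match since $(s_1-m)-(s_0-m)=s_1-s_0$. Hence $[H^{(s_0-m)}(\ZZ^n),H^{(s_1-m)}(\ZZ^n)]_\psi = H^{t^{-m}\varphi}(\ZZ^n)$ with equality of norms.

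Finally, since $\psi$ is an interpolation parameter, applying the definition of interpolation from subsection~\ref{SS-2-1} to the operator $T_a$ (whose restrictions to $H^{(s_0)}(\ZZ^n)$ and $H^{(s_1)}(\ZZ^n)$ are bounded into $H^{(s_0-m)}(\ZZ^n)$ and $H^{(s_1-m)}(\ZZ^n)$ respectively) yields that $T_a$ restricts to a bounded linear operator from $[H^{(s_0)}(\ZZ^n),H^{(s_1)}(\ZZ^n)]_\psi$ to $[H^{(s_0-m)}(\ZZ^n),H^{(s_1-m)}(\ZZ^n)]_\psi$, i.e.\ $T_a\colon H^{\varphi}(\ZZ^n)\to H^{t^{-m}\varphi}(\ZZ^n)$ is bounded. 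One small loose end is to make sure the "restriction" in the interpolation formalism really agrees with the canonical extension of $T_a$ to $S'(\ZZ^n)$ described in subsection~\ref{SS:s-2-5}; this follows from the density of $\mathcal{S}(\ZZ^n)$ in all the spaces involved (Remark~\ref{R:dense-sobolev} and Theorem~\ref{T:main-4}(i)). I expect the main obstacle to be purely bookkeeping: verifying that the two instances of formula~(\ref{E:phi-psi-inverse}) produce the \emph{same} $\psi$ and that the Matuszewska indices shift correctly under multiplication by $t^{-m}$, so that Theorem~\ref{T:main-2} is genuinely applicable on the target side. Everything else is a direct invocation of the already-established interpolation theorem and the known mapping property of $T_a$ on the ordinary Sobolev scale.
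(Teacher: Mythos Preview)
Your proposal is correct and follows essentially the same approach as the paper: choose $s_0<\sigma_0(\varphi)$, $s_1>\sigma_1(\varphi)$, invoke the known boundedness $T_a\colon H^{(s_j)}(\ZZ^n)\to H^{(s_j-m)}(\ZZ^n)$, build $\psi$ via~(\ref{E:phi-psi-inverse}), identify both interpolated spaces, and interpolate. The only cosmetic difference is that the paper identifies the target space by applying Proposition~\ref{L-3} directly (verifying $t^{-m}\varphi(t)=t^{s_0-m}\psi(t^{(s_1-m)-(s_0-m)})$), whereas you route through a second application of Theorem~\ref{T:main-2} and therefore carry out the extra (but easy) check that the Matuszewska indices of $t^{-m}\varphi$ shift by $-m$; both paths arrive at the same identity for $\psi$ and the same conclusion.
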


\subsection{Fredholmness of PDO on $\ZZ^n$}\label{SS-1-19-f} Let $\mathscr{B}_{1}$ and $\mathscr{B}_{2}$ be Banach spaces and let $T\colon \mathscr{B}_{1}\to \mathscr{B}_{2}$ be a bounded linear operator. As in defintion 8.1 of~\cite{sh-book}, $T$ is said to be a \emph{Fredhlom operator} if the following conditions are satisfied: (i) $\dim (\textrm{Ker }T)<\infty$, (ii) $\textrm{Ran }T$ is closed, and (iii)  $\dim (\textrm{Coker }T)<\infty$, where  $\textrm{Coker }T:=\mathscr{B_2}/(\textrm{Ran } T)$.

\begin{remark} Actually, for a bounded linear operator $T\colon \mathscr{B}_{1}\to \mathscr{B}_{2}$, the condition $\dim (\textrm{Coker }T)<\infty$ implies that $\textrm{Ran }T$ is closed; see lemma 8.1 in~\cite{sh-book}.
\end{remark}

For a Fredholm operator $T\colon \mathscr{B}_{1}\to \mathscr{B}_{2}$, we define the \emph{index} $\kappa(T)$ as
\begin{equation*}
\kappa(T):=\dim (\textrm{Ker } T)-\dim (\textrm{Coker }T).
\end{equation*}

Let $a\in S^{m}(\ZZ^n\times \TT^n)$ with $m\in\RR$, and let $T_{a}$ and $T_{a}^{\dagger}$ be as in section~\ref{SS:s-2-5}. In the formulation of the theorem we will use the following sets:

\begin{equation}\label{E:ker-a}
\mathscr{K}:=\{u\in H^{\infty}(\ZZ^n)\colon T_{a}u=0\},\qquad \mathscr{K^{\dagger}}:=\{u\in H^{\infty}(\ZZ^n)\colon (T_{a})^{\dagger}u=0\},
\end{equation}
where $H^{\infty}(\ZZ^n):=\cap_{r\in\RR}H^{(r)}(\ZZ^n)$.

The following theorem is a discrete analogue of theorem 2.28 in~\cite{MM-14} (or theorem 3 in~\cite{MM-09}), which was situated in the setting of compact manifolds:

\begin{theorem}\label{T:main-5-f} Assume that $a\in ES^{m}(\ZZ^n\times \TT^n)$, where $m\in\RR$ and $ES^{m}(\ZZ^n\times \TT^n)$ is as in definition~\ref{D-2}. Let $\varphi\in RO$ and let $t^{-m}\varphi$ denote the product of the functions $t^{-m}$ and $\varphi$. Then
\begin{equation}\nonumber
T_{a}\colon H^{\varphi}(\ZZ^n)\to H^{t^{-m}\varphi}(\ZZ^n),
\end{equation}
is a (bounded) Fredholm operator. Furthermore, keeping in mind the notations~(\ref{E:ker-a}), the following properties hold:
\begin{itemize}
  \item [(i)] $\textrm{Ker } T_{a}=\mathscr{K}$;
  \item [(ii)] $\textrm{Ran } T_{a}=\{v\in H^{t^{-m}\varphi}(\ZZ^n)\colon (v,w)=0,\,\,\textrm{for all }w\in \mathscr{K^{\dagger}}\}$, where $(\cdot,\cdot)$ is the sesquilinear duality between $H^{t^{-m}\varphi}(\ZZ^n)$ and $H^{\frac{t^{m}}{\varphi}}(\ZZ^n)$, as described in~(\ref{E:inner-product-h-phi});
  \item [(iii)] the index of $T_{a}$ is $\kappa(T_{a})=\dim (\mathscr{K})-\dim (\mathscr{K^{\dagger}})$. (Hence, $\kappa(T_{a})$ is independent of $\varphi$.)
\end{itemize}
 \end{theorem}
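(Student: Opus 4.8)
The plan is to reduce the Fredholmness statement to the already-known Sobolev-scale case (Proposition~\ref{P:prop-s-s-m}, i.e.\ the analogue of theorem 4.2 in~\cite{DK-20} for general $s,m$) via the interpolation result of Theorem~\ref{T:main-2}, and then to identify the kernel and cokernel using elliptic regularity together with the anti-duality of Theorem~\ref{T:main-4}(iii). First I would fix $s_0<\sigma_0(\varphi)$ and $s_1>\sigma_1(\varphi)$ and let $\psi\in\mathcal{B}$ be the interpolation parameter from~(\ref{E:phi-psi-inverse}), so that $H^{\varphi}(\ZZ^n)=[H^{(s_0)}(\ZZ^n),H^{(s_1)}(\ZZ^n)]_{\psi}$ with equality of norms. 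Since $a\in ES^{m}(\ZZ^n\times\TT^n)\subset S^{m}(\ZZ^n\times\TT^n)$, by Proposition~\ref{P:prop-s-s-m} the operator $T_{a}\colon H^{(s_j)}(\ZZ^n)\to H^{(s_j-m)}(\ZZ^n)$ is bounded and Fredholm for $j=0,1$, with index independent of $s_j$. Applying the interpolation parameter $\psi$ to the pair $[H^{(s_0-m)}(\ZZ^n),H^{(s_1-m)}(\ZZ^n)]$ and noting (as in the proof of Theorem~\ref{T:main-5}) that $[H^{(s_0-m)}(\ZZ^n),H^{(s_1-m)}(\ZZ^n)]_{\psi}=H^{t^{-m}\varphi}(\ZZ^n)$ up to norm equivalence, one gets boundedness of $T_{a}\colon H^{\varphi}(\ZZ^n)\to H^{t^{-m}\varphi}(\ZZ^n)$ for free (this is Theorem~\ref{T:main-5}).

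To upgrade boundedness to Fredholmness under interpolation, I would invoke the standard fact that interpolation with a function parameter preserves the Fredholm property provided the operator restricted to the two endpoint spaces is Fredholm with the same kernel and the same index (this is the discrete analogue of theorem 1.7 in~\cite{MM-14}; see also the mechanism used in theorem 2.28 there). The hypothesis we need is that $\textrm{Ker}\big(T_{a}\colon H^{(s)}(\ZZ^n)\to H^{(s-m)}(\ZZ^n)\big)$ is the same finite-dimensional subspace for all $s$, and likewise for $T_{a}^{\dagger}$; this follows from elliptic regularity: if $a$ is elliptic, then $T_{a}u=0$ with $u\in H^{(s)}(\ZZ^n)$ forces $u\in H^{(r)}(\ZZ^n)$ for all $r$, hence $u\in H^{\infty}(\ZZ^n)$, so in fact $\textrm{Ker}(T_{a}\colon H^{(s)}\to H^{(s-m)})=\mathscr{K}$ for every $s$. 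A parametrix construction for elliptic $a$ (as in~\cite{BRK-20}), or the description of the range via the cokernel pairing, gives the corresponding statement for $T_{a}^{\dagger}$, which is a PDO with symbol in $S^{m}(\ZZ^n\times\TT^n)$ that is again elliptic. Consequently the common endpoint kernel is $\mathscr{K}$, the common endpoint cokernel dimension is $\dim\mathscr{K^{\dagger}}$, and the common index is $\dim\mathscr{K}-\dim\mathscr{K^{\dagger}}$; interpolation then yields that $T_{a}\colon H^{\varphi}(\ZZ^n)\to H^{t^{-m}\varphi}(\ZZ^n)$ is Fredholm with $\textrm{Ker}\,T_{a}=\mathscr{K}$ and $\kappa(T_{a})=\dim\mathscr{K}-\dim\mathscr{K^{\dagger}}$, in particular independent of $\varphi$. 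This proves (i) and (iii).

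For (ii), I would argue by the anti-duality of Theorem~\ref{T:main-4}(iii): the pairing $(\cdot,\cdot)$ makes $H^{t^{-m}\varphi}(\ZZ^n)$ and $H^{t^{m}/\varphi}(\ZZ^n)$ mutually dual, and with respect to this pairing the Banach-space adjoint of $T_{a}\colon H^{\varphi}\to H^{t^{-m}\varphi}$ is the extension of the formal adjoint $T_{a}^{\dagger}\colon H^{t^{m}/\varphi}(\ZZ^n)\to H^{1/\varphi}(\ZZ^n)$ (one checks this on $\mathcal{S}(\ZZ^n)$, which is dense in each space by Theorem~\ref{T:main-4}(i), using~(\ref{E:dual-1})). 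Since $T_{a}$ has closed range (being Fredholm), the closed-range theorem gives $\textrm{Ran}\,T_{a}=(\textrm{Ker}\,T_{a}^{\dagger})^{\perp}$ in the sense of the pairing, i.e.\ $\textrm{Ran}\,T_{a}=\{v\in H^{t^{-m}\varphi}(\ZZ^n)\colon (v,w)=0\text{ for all }w\in\textrm{Ker}(T_{a}^{\dagger}\colon H^{t^{m}/\varphi}\to H^{1/\varphi})\}$; and by the same elliptic regularity argument applied to $T_{a}^{\dagger}$, this kernel equals $\mathscr{K^{\dagger}}$.

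I expect the main obstacle to be the bookkeeping in the second paragraph: one must verify carefully that the interpolation theorem for the Fredholm property genuinely applies, i.e.\ that the restrictions of $T_{a}$ to the two Sobolev endpoints have a \emph{common} kernel and a \emph{common} index as operators between the appropriately shifted endpoint spaces, and that these facts transfer through $\psi(J)$. The elliptic regularity step (bootstrapping $T_{a}u=0$, $u\in H^{(s)}$, up to $u\in H^{\infty}$) is the technical heart here; once it is in place, everything else is a routine application of the abstract interpolation machinery and the closed-range theorem combined with Theorem~\ref{T:main-4}(iii).
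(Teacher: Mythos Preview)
Your proposal is correct and follows essentially the same strategy as the paper: reduce to the Sobolev-scale Fredholmness of Proposition~\ref{P:prop-s-s-m}, verify via elliptic regularity that the endpoint operators share the kernel $\mathscr{K}$ and the index $\dim\mathscr{K}-\dim\mathscr{K}^{\dagger}$, and then invoke the abstract interpolation-of-Fredholm-operators theorem (theorem~1.7 in~\cite{MM-14}, stated in the paper as Proposition~\ref{P:murach-abs-f}) to transfer Fredholmness, kernel, and index to $T_a\colon H^{\varphi}\to H^{t^{-m}\varphi}$.

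The only noteworthy difference concerns item~(ii). The paper extracts the range description from the additional clause of Proposition~\ref{P:murach-abs-f}, namely $T_a(\mathscr{H}_{\psi})=\mathscr{G}_{\psi}\cap T_a(\mathscr{H}_0)$, and then intersects with the Sobolev-scale range characterization already obtained in Proposition~\ref{P:prop-s-s-m}(ii). You instead identify the Banach-space adjoint of $T_a$ with the extension of $T_a^{\dagger}$ via the anti-duality of Theorem~\ref{T:main-4}(iii) and apply the closed-range theorem directly. Both arguments are valid; the paper's route is marginally shorter since the range formula comes for free from the abstract interpolation proposition it is already citing, while your route is more self-contained but requires the extra (easy) verification that the Banach adjoint agrees with $T_a^{\dagger}$ on the extended scale.
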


In our last result we use the concept of ``extended Hilbert scale" from~\cite{MM-21}.

\subsection{Extended Hilbert Scale}\label{SS-1-21} In this section we follows the terminology from section 2 of~\cite{MM-21}.
Let $\mathscr{H}$ be a separable complex Hilbert space with inner product $(\cdot,\cdot)_{\mathscr{H}}$ and norm $\|\cdot\|_{\mathscr{H}}$. Let $A$ be a self-adjoint operator in $\mathscr{H}$ such that $(Au,u)_{\mathscr{H}}\geq\|u\|^2_{\mathscr{H}}$ for all $u\in\dom (A)$.

Using spectral calculus we define the operator $A^{s}$ for each $s\in\RR$. Note that $\dom(A^s)$ is dense in $\mathscr{H}$; in particular, if $s\leq0$ we have  $\dom (A^s)=\mathscr{H}$. The space $H_{A}^{(s)}$ is defined as the completion of $\dom(A^s)$ with respect to the inner product
\begin{equation*}
(u,v)_{s}:=(A^su,A^sv)_{\mathscr{H}}, \qquad u,v\in \dom(A^s).
\end{equation*}
It turns out that $H_{A}^{(s)}$ is a separable Hilbert space whose inner product and norm will be denoted by $(\cdot,\cdot)_{s}$ and $\|\cdot\|_{s}$. We call $\{H_{A}^{(s)}\colon s\in\RR\}$ \emph{Hilbert scale generated by $A$} or, in shorter form, \emph{$A$-scale}. As mentioned in section 2 of ~\cite{MM-21}, for $s\geq 0$ we have $H_{A}^{(s)}=\dom (A^s)$, while for $s<0$ we have $H_{A}^{(s)}\supset \mathscr{H}$.

According to section 2 of~\cite{MM-21}, for all $s_0<s_1$,  $[H_{A}^{(s_0)},H_{A}^{(s_1)}]$ is an admissible pair (in the sense of section~\ref{SS-2-1} above). Recalling the definition of interpolation space (see section~\ref{SS-1-11} above), the term \emph{extended Hilbert scale generated by $A$} or
\emph{extended $A$-scale} refers to the set of all Hilbert spaces that serve as interpolation spaces with respect to pairs of the form $[H_{A}^{(s_0)},H_{A}^{(s_1)}]$, $s_0<s_1$.

Remembering the hypotheses on  $A$ and using spectral calculus, for a Borel measurable function $\varphi\colon [1,\infty)\to(0,\infty)$,  we can define a (positive self-adjoint) operator $\varphi(A)$ in $\mathscr{H}$. Furthermore, we define $H_{A}^{\varphi}$ as the completion of
$\dom(\varphi(A))$ with respect to the inner product
\begin{equation*}
(u,v)_{\varphi}:=(\varphi(A)u,\varphi(A)v)_{\mathscr{H}}, \qquad u,v\in \dom(\varphi(A)).
\end{equation*}
According to section 2 of~\cite{MM-21}, $H_{A}^{\varphi}$ is a separable Hilbert space, and its inner product and norm will be denoted by $(\cdot,\cdot)_{\varphi}$ and $\|\cdot\|_{\varphi}$. Furthermore (see section 2 of~\cite{MM-21}), we have $H_{A}^{\varphi}=\dom(\varphi(A))$ if and only if $0\notin\textrm{Spec}(\varphi(A))$.

\subsection{Extended Hilbert Scale on $\ZZ^n$}\label{SS-1-22}
We make the following assumptions on our operator $A:=\textrm{Op}[a]$:
\begin{enumerate}
\item [(H1)] $a\in ES^{1}(\ZZ^n\times \TT^n)$, where $ES^{1}(\ZZ^n\times \TT^n)$ is as in definition~\ref{D-2};

\item [(H2)] $(Au,u)\geq \|u\|^2$, for all $u\in \mathcal{S}(\ZZ^n)$, where $(\cdot,\cdot)$ is as in~(\ref{E:inner-l-2}) and $\|\cdot\|$ is the corresponding norm in $\ell^2(\ZZ^n)$.
\end{enumerate}

\begin{remark}\label{R:kor} Under the assumptions (H1)--(H2), $A|_{\mathcal{S}(\ZZ^n)}$ is a formally self-adjoint operator whose symbol $a$ belongs to the class $ES^{1}(\ZZ^n\times \TT^n)$. Therefore (as a consequence of theorem 3.19 in~\cite{DK-20}), the operator $A|_{\mathcal{S}(\ZZ^n)}$ is essentially self-adjoint in $\ell^2(\ZZ^n)$.  Furthermore, by proposition 3.18 from~\cite{DK-20}, the domain of the self-adjoint closure of $A$ is the space $H^{(1)}(\ZZ^n)$. To keep our notations simpler, we denote the self-adjoint closure of $A|_{\mathcal{S}(\ZZ^n)}$ again by $A$. With this clarification, as in section~\ref{SS-1-21} above (now in the setting $\mathscr{H}=\ell^2(\ZZ^n)$), for a Borel function
$\varphi\colon [1,\infty)\to(0,\infty)$ we define the space $H_{A}^{\varphi}$ corresponding to an operator $A$ satisfying (H1)--(H2).
\end{remark}
The theorem below is an analogue of theorem 5.1 in~\cite{MM-21} for the $\RR^n$-setting.
\begin{theorem}\label{T:main-7} Let $\varphi\in RO$. Assume that $A:=\textrm{Op}[a]$ is an operator satisfying (H1)--(H2). Then, up to norm equivalence, we have
\begin{equation}\label{E:thm-7}
H^{\varphi}_{A}(\ZZ^n)=H^{\varphi}(\ZZ^n),
\end{equation}
where $H^{\varphi}_{A}(\ZZ^n)$ is as in remark~\ref{R:kor} and $H^{\varphi}(\ZZ^n)$ is as in section~\ref{SS-1-9}.
\end{theorem}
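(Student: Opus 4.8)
The plan is to identify the self-adjoint operator $A$ (the closure of $\operatorname{Op}[a]$ on $\mathcal{S}(\ZZ^n)$) with a concrete, explicitly diagonalizable operator up to bounded perturbation, and then transport the abstract spectral-calculus definition of $H_A^\varphi$ onto the Fourier side where $H^\varphi(\ZZ^n)$ lives. Concretely, let $J$ denote the operator of multiplication by $\langle k\rangle$ on $\ell^2(\ZZ^n)$; this is a positive self-adjoint operator with $\dom(J)=H^{(1)}(\ZZ^n)$, and by definition $H^{\varphi}(\ZZ^n)=\dom(\varphi(J))$ with $\|u\|_{H^\varphi}=\|\varphi(J)u\|$. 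By Remark~\ref{R:kor}, $A$ is self-adjoint with $\dom(A)=H^{(1)}(\ZZ^n)=\dom(J)$, and by (H2) we have $A\geq I$, so $\varphi(A)$ and $H_A^\varphi$ make sense. The key structural fact is that $A$ and $J$ have \emph{equivalent graph norms} in a strong sense: since $a\in ES^{1}(\ZZ^n\times\TT^n)$, Theorem~\ref{T:main-5} (and the analogous statement for elliptic PDOs, cf. Proposition~\ref{P:prop-s-s-m}) gives that $A$ maps $H^{(s)}(\ZZ^n)\to H^{(s-1)}(\ZZ^n)$ boundedly for every $s$, and ellipticity gives a two-sided estimate $\|Au\|_{H^{(s-1)}}\asymp\|u\|_{H^{(s)}}$ modulo lower-order terms. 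In particular $\|Au\|\asymp\|u\|_{H^{(1)}}=\|Ju\|$ for $u\in H^{(1)}(\ZZ^n)$, up to adding a multiple of $\|u\|$.

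The core of the argument is then the following operator-theoretic lemma, which I would isolate: if $A,B$ are two self-adjoint operators on a Hilbert space $\mathscr H$ with $A\geq I$, $B\geq I$, $\dom(A)=\dom(B)$, and the graph norms are equivalent (i.e.\ $c^{-1}\|Bu\|\leq\|Au\|\leq c\|Bu\|$ for all $u$ in the common domain), then for every $\varphi\in RO$ one has $\dom(\varphi(A))=\dom(\varphi(B))$ with equivalent norms, hence $H_A^\varphi=H_B^\varphi$ up to norm equivalence. I would prove this lemma via interpolation: the hypotheses say exactly that the identity map is an isomorphism of the admissible pairs $[\mathscr H,H_A^{(1)}]$ and $[\mathscr H,H_B^{(1)}]$; applying Theorem~\ref{T:main-2} with $B$ playing the role of the generating operator (choosing $s_0<\sigma_0(\varphi)\wedge 0$ and $s_1>\sigma_1(\varphi)\vee 1$, and noting $H_B^{(s)}$ for the relevant $s$ is itself obtained by interpolation from $[\mathscr H, H_B^{(1)}]$, by a duality/reiteration argument using (H2)), one gets that $H_B^\varphi=[\mathscr H,H_B^{(1)}]_\psi$ for an appropriate interpolation parameter $\psi$, and the same for $A$; since the two pairs are isomorphic via the identity, the interpolated spaces coincide with equivalent norms. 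Taking $B=J$ finishes the proof, because $H_J^{(s)}=H^{(s)}(\ZZ^n)$ and $H_J^\varphi=H^\varphi(\ZZ^n)$ by construction. An alternative, possibly cleaner route avoiding the reiteration subtlety: apply Theorem~\ref{T:main-7}'s statement directly through Corollary~\ref{C:cor-1} — $H_A^\varphi$ is by definition an interpolation space with respect to the $A$-scale $\{H_A^{(s)}\}$, and the equivalence of graph norms identifies each $H_A^{(s)}$ with $H^{(s)}(\ZZ^n)$ (for $s\in[0,1]$ directly, then for all $s$ by duality and the group property of $A^s$ versus $J^s$), so $H_A^\varphi$ is an interpolation space with respect to $\{H^{(s)}(\ZZ^n)\}$, hence equals some $H^\psi(\ZZ^n)$; one then checks $\psi$ is equivalent to $\varphi$ by evaluating the generating-operator functional calculus.

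The main obstacle is verifying that the equivalence of the \emph{first-order} graph norms, $\|Au\|\asymp\|Ju\|$, actually propagates to equivalence of $H_A^{(s)}$ and $H^{(s)}(\ZZ^n)$ for \emph{all} real $s$, including negative $s$ and large $s$ — this is where ellipticity of $a$ (not just its order) is essential, since for a merely bounded-order operator the graph norms of $A^s$ and $J^s$ need not be comparable. I expect to handle this by: (a) for $s\in\{0,1\}$ it is immediate; (b) for $s\in(0,1)$ use Theorem~\ref{T:main-2} (interpolation of $[\ell^2, H^{(1)}]$) applied to both $A$ and $J$; (c) for $s>1$ use that $a^{s}$-type parametrix constructions (or iterating the elliptic estimate $\|u\|_{H^{(s)}}\asymp\|Au\|_{H^{(s-1)}}+\|u\|$ from Proposition~\ref{P:prop-s-s-m}) show $\dom(A^s)=H^{(s)}(\ZZ^n)$ with equivalent norms; (d) for $s<0$ pass to adjoints using the anti-duality of $H^{(s)}$ and $H^{(-s)}$ (Appendix) together with self-adjointness of $A$. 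The second obstacle — minor — is the careful bookkeeping in Theorem~\ref{T:main-2} to see that the interpolation parameter $\psi$ attached to $\varphi$ via~(\ref{E:phi-psi-inverse}) is the same whether one interpolates the $J$-pair or the $A$-pair; but this is automatic once the pairs are identified, since $\psi$ depends only on $\varphi$ and the chosen exponents $s_0,s_1$, not on the operator.
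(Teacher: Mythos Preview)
Your overall strategy matches the paper's: identify $H_A^{(s)}(\ZZ^n)$ with $H^{(s)}(\ZZ^n)$ for enough values of $s$, then interpolate using Theorem~\ref{T:main-2} on both sides. Your obstacle analysis (a)--(d) is exactly the right content, and the alternative route you describe is essentially the paper's proof.

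Two remarks. First, your ``core operator-theoretic lemma'' as initially stated is not sufficient: equivalence of the first-order graph norms only lets you interpolate the pair $[\mathscr H, H_A^{(1)}]$, which captures $H_A^\varphi$ only when $0\leq\sigma_0(\varphi)$ and $\sigma_1(\varphi)\leq 1$. For general $\varphi\in RO$ you genuinely need the equivalence $H_A^{(s)}=H^{(s)}(\ZZ^n)$ for $s$ outside $[0,1]$, so you cannot avoid steps (c) and (d); your attempt to recover these by ``reiteration from $[\mathscr H,H_B^{(1)}]$'' does not work, since interpolating that pair never produces spaces larger than $\mathscr H$ or smaller than $H_B^{(1)}$. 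Second, the paper streamlines your (a)--(d) considerably: it suffices to take \emph{integer} $s=\pm k$ with $k$ large enough that $-k<\sigma_0(\varphi)$ and $k>\sigma_1(\varphi)$. For positive integers one argues directly that $A^k\colon H^{(k)}(\ZZ^n)\to\ell^2(\ZZ^n)$ is an isomorphism (ellipticity plus the elliptic-regularity/parametrix argument you mention), hence $H_A^{(k)}=H^{(k)}(\ZZ^n)$; negative integers follow by duality. One then invokes the abstract fact (Proposition~\ref{P:6-1}, from~\cite{MM-21}) that $[H_A^{(-k)},H_A^{(k)}]_\psi=H_A^\varphi$, and Theorem~\ref{T:main-2} gives $[H^{(-k)}(\ZZ^n),H^{(k)}(\ZZ^n)]_\psi=H^\varphi(\ZZ^n)$ with the \emph{same} $\psi$. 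This avoids treating non-integer $s$ altogether.
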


\begin{remark} The condition $a\in ES^{1}(\ZZ^n\times \TT^n)$ in (H1) can be replaced by $a\in ES^{m}(\ZZ^n\times \TT^n)$, with $m>0$. In this case, the following variant of~(\ref{E:thm-7}) holds: $H^{\varphi}_{A}(\ZZ^n)=H^{\varphi_{m}}(\ZZ^n)$, where $\varphi_{m}(t):=\varphi(t^m)$, $t\geq 1$.
\end{remark}

\section{Auxiliary Results}\label{S:S-2} In this section we collect various results from literature that are used in proofs of main results of this paper.

For future reference, we say that a Borel measurable function $\psi\colon (0,\infty)\to (0,\infty)$ is \emph{pseudoconcave in a neighborhood of} $\infty$ if there exists a concave function $\psi_1 \colon (c,\infty)\to (0,\infty)$, where $c>1$ is a large number, such that $\frac{\psi}{\psi_1}$ and $\frac{\psi_1}{\psi}$ are bounded on $(c,\infty)$.

We begin with a proposition for which we refer to theorem 1.9 in~\cite{MM-14}.
\begin{prop}\label{L-1} A function $\psi\in\mathcal{B}$ is an interpolation parameter if and only if $\psi$ is pseudoconcave in a neighborhood of $\infty$.
\end{prop}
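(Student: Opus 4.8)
The statement is quoted from Mikhailets--Murach (theorem 1.9 of~\cite{MM-14}), so in the present paper it would simply be cited. Nevertheless, here is how I would organize a self-contained argument. The plan is to reduce the problem to the classical interpolation theory of the real (or complex) method via the observation that, for an admissible pair $\mathscr{H}=[\mathscr{H}_0,\mathscr{H}_1]$ with generating operator $J$, the interpolation space $\mathscr{H}_\psi=\dom(\psi(J))$ is a \emph{Hilbert} interpolation space obtained by a Peetre-type $K$-functional construction: one shows first that $\|u\|_{\mathscr{H}_\psi}^2$ is equivalent to $\int_0^\infty \big(\chi(t)\,K(t,u)\big)^2\,\frac{dt}{t}$ for a suitable weight $\chi$ determined by $\psi$, so that the operator-monotone/operator-concave behaviour of $\psi$ on the spectrum of $J$ becomes the relevant feature. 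Concretely, by the spectral theorem for the positive self-adjoint $J$ one has $\|u\|_{\mathscr{H}_\psi}^2=\int_{\sigma(J)}\psi(\lambda)^2\,d(E_\lambda u,u)_{\mathscr{H}_0}$, and $\|u\|_{\mathscr{H}_0}^2$, $\|u\|_{\mathscr{H}_1}^2$ are the same integrals with weights $1$ and $\lambda^2$.

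For the ``if'' direction, assume $\psi$ is pseudoconcave near $\infty$; so $\psi\asymp\psi_1$ on $(c,\infty)$ with $\psi_1$ concave, and on $(0,c]$ one uses that $\psi\in\mathcal B$ is bounded and bounded below to adjust. A concave positive function on a half-line is, after the substitution $\lambda\mapsto\lambda^2$ via $\psi(\lambda)=\psi_1(\lambda)$, operator-monotone/operator-concave up to equivalence (this is the key analytic input: a positive concave function is a pointwise limit of positive combinations of functions $\lambda\mapsto \frac{\lambda}{\lambda+\mu}$, $\mu>0$, plus linear terms, and each such building block is operator-monotone). Then for any bounded $T\colon\mathscr{H}_0\to\mathscr{K}_0$, $T\colon\mathscr{H}_1\to\mathscr{K}_1$ with norms $M_0,M_1$, one checks the quadratic estimate
\[
\psi(J_{\mathscr{K}})^2 \;\lesssim\; M_0^2\,\psi\!\big(\tfrac{M_1^2}{M_0^2}J_{\mathscr{H}}\big)^2
\]
in the form sense on the relevant domains, using that $T^*E^{\mathscr{K}}T$ is dominated by the spectral data of $J_{\mathscr{H}}$ together with operator monotonicity of $\psi_1(\lambda^{1/2})^2$ (equivalently, of the associated operator-concave function); integrating against the spectral measure yields $\|Tu\|_{\mathscr{K}_\psi}\le C\max\{M_0,M_1\}\|u\|_{\mathscr{H}_\psi}$, which is exactly the interpolation-parameter property, with the embeddings $\mathscr{H}_1\hookrightarrow\mathscr{H}_\psi\hookrightarrow\mathscr{H}_0$ following from $\min\{1,\lambda\}\lesssim\psi(\lambda)\lesssim\max\{1,\lambda\}$ up to equivalence.

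For the ``only if'' direction, suppose $\psi$ is an interpolation parameter. Apply the defining property to a single cleverly chosen model pair and operator that force concavity: take $\mathscr{H}_0=\mathscr{K}_0=\ell^2$ weighted pairs (or $L^2$ of a measure) realizing prescribed spectral data for $J$, and feed in rank-one or diagonal operators $T$ that are contractions $\mathscr{H}_0\to\mathscr{K}_0$ and $\mathscr{H}_1\to\mathscr{K}_1$ simultaneously; the conclusion $\|T\|_{\mathscr{H}_\psi\to\mathscr{K}_\psi}\le C$ translates into the inequality $\psi(\theta\lambda_1+(1-\theta)\lambda_2)\ge c\big(\theta\psi(\lambda_1)+(1-\theta)\psi(\lambda_2)\big)$ for large $\lambda_1,\lambda_2$ and $\theta\in[0,1]$, i.e. $\psi$ is pseudoconcave near $\infty$ (one recovers a genuine concave majorant/minorant $\psi_1$ by taking the least concave majorant of $\psi$ on $(c,\infty)$ and checking boundedness of the ratios from the two-point inequalities). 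The main obstacle is this last step: constructing the test operators so that the boundedness hypothesis on \emph{both} endpoints is genuinely used and yields the \emph{two-sided} comparison with a concave function, rather than merely a quasi-concavity estimate; handling the behaviour near $0$ (where $\mathcal B$ only gives local boundedness) and splicing it to the concave piece on $(c,\infty)$ is the technical heart of the argument. Since all of this is carried out in full in~\cite{MM-14}, in the paper itself it suffices to invoke that reference.
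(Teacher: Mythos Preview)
You correctly identify that the paper does not prove this proposition at all: it is stated in the auxiliary-results section with an explicit pointer to theorem~1.9 of~\cite{MM-14}, and nothing more. So as far as matching the paper goes, your first sentence already suffices.

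Regarding your supplementary sketch: the ``if'' direction is in the right spirit (reduce to a genuinely concave $\psi_1$, use an integral representation / operator-monotone ingredients to pass the spectral estimate through $T$), and this is indeed how the argument in~\cite{MM-14} (going back to Foia\c{s}--Lions and Peetre) runs. The ``only if'' direction, however, has a gap in the form you wrote. The inequality you claim to extract,
\[
\psi(\theta\lambda_1+(1-\theta)\lambda_2)\ \ge\ c\bigl(\theta\psi(\lambda_1)+(1-\theta)\psi(\lambda_2)\bigr),
\]
is not what the test-pair argument actually delivers, and it is not the standard characterization of pseudoconcavity. What one obtains from applying the interpolation-parameter hypothesis to one-dimensional model pairs $[\mathbb{C},\mathbb{C}_s]$ and $[\mathbb{C},\mathbb{C}_t]$ with the identity map (so $M_0=1$, $M_1=t/s$) is the two-point estimate
\[
\psi(t)\ \le\ C\,\max\{1,\,t/s\}\,\psi(s)\qquad(t,s\ \text{large}),
\]
i.e.\ $\psi$ is quasi-concave near infinity; one then passes to the least concave majorant to get a genuine concave $\psi_1$ with $\psi\asymp\psi_1$. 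Your convex-combination inequality would require a different (and more delicate) choice of test operator, and you do not indicate one; you yourself flag this step as the ``main obstacle,'' which is accurate. If you want the sketch to stand on its own, replace that inequality with the $\max\{1,t/s\}$ bound and the least-concave-majorant step.
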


The next proposition, for which we refer to theorem 11.4.1 in~\cite{O-84}, plays an important role in showing the implication (i)$\implies$(ii) of theorem~\ref{T:main-1}.

\begin{prop}\label{L-2} Let $\mathscr{H}=[\mathscr{H}_{0},\mathscr{H}_{1}]$ be an admissible pair of Hilbert spaces. Assume that a space $\mathscr{S}$ is an interpolation space for the pair $\mathscr{H}$. Then, up to the norm equivalence, we have $\mathscr{S}=\mathscr{H}_{\psi}$, for some function $\psi\in\mathcal{B}$ such that $\psi$ is pseudoconcave in a neighborhood of $\infty$.
\end{prop}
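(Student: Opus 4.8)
The plan is to transport the whole question to a concrete model via the generating operator $J$ of the pair, and then to show that the interpolation hypothesis forces the norm of $\mathscr{S}$ to be, up to equivalence, a weighted quadratic norm in $J$ whose weight is pseudoconcave. First I would record that, since $\mathscr{H}_{1}\hookrightarrow\mathscr{H}_{0}$ continuously, the generating operator $J$ (positive and self-adjoint in $\mathscr{H}_{0}$, with $\dom(J)=\mathscr{H}_{1}$ and $\|Ju\|_{\mathscr{H}_{0}}=\|u\|_{\mathscr{H}_{1}}$) is bounded below; after rescaling we may assume $\sigma(J)\subset[1,\infty)$. Diagonalizing $J$ by a direct integral decomposition $\mathscr{H}_{0}=\int^{\oplus}_{\sigma(J)}\mathscr{H}(\lambda)\,d\nu(\lambda)$, the operator $J$ becomes multiplication by $\lambda$, the endpoint norms become $\|f\|_{\mathscr{H}_{0}}^{2}=\int\|f(\lambda)\|_{\mathscr{H}(\lambda)}^{2}\,d\nu$ and $\|f\|_{\mathscr{H}_{1}}^{2}=\int\lambda^{2}\|f(\lambda)\|_{\mathscr{H}(\lambda)}^{2}\,d\nu$, and the problem reduces to the model couple $[L^{2}(\nu),L^{2}(\lambda^{2}\nu)]$.

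Next I would identify $\mathscr{S}$ as a weighted quadratic space. The test operators are of two kinds, each bounded on both endpoints: (a) the scalar multiplications $m(\lambda)\,\mathrm{Id}$ by bounded Borel functions of $\lambda$ (equivalently, bounded Borel functions of $J$), which have equal norm $\sup|m|$ on $\mathscr{H}_{0}$ and on $\mathscr{H}_{1}$; and (b) the decomposable unitaries $f(\lambda)\mapsto U(\lambda)f(\lambda)$ acting inside the fibers, which preserve both endpoint norms. By property (ii) of an interpolation space, together with the uniform bound of Remark~\ref{R-rem-int}, the Hilbert norm of $\mathscr{S}$ must be invariant under both families. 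Invariance under (b) makes the $\mathscr{S}$-norm fiberwise rotation invariant, and invariance under (a) makes it additive over disjoint spectral sets; combining these, the norm must be equivalent to one of the form $\|f\|_{\mathscr{S}}^{2}=\int\psi(\lambda)^{2}\|f(\lambda)\|_{\mathscr{H}(\lambda)}^{2}\,d\nu$ for a nonnegative Borel function $\psi$. Hence $\mathscr{S}=\dom(\psi(J))=\mathscr{H}_{\psi}$, and the continuous embeddings $\mathscr{H}_{1}\hookrightarrow\mathscr{S}\hookrightarrow\mathscr{H}_{0}$ translate into $c^{-1}\le\psi(\lambda)\le c\,\lambda$ on $\sigma(J)$, so that after an inessential extension to $(0,\infty)$ we obtain $\psi\in\mathcal{B}$.

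The main obstacle is the final regularity claim, that $\psi$ may be chosen pseudoconcave near $\infty$; this is where one must exploit the off-diagonal part of property (ii). For a pair of spectral levels $\lambda_{1},\lambda_{2}\in\sigma(J)$ I would construct an essentially rank-one operator $T$ transporting unit mass from the fiber over $\lambda_{1}$ to the fiber over $\lambda_{2}$, compute its endpoint norms $M_{0},M_{1}$ explicitly, and feed $\|T\|_{\mathscr{S}\to\mathscr{S}}\le C\max\{M_{0},M_{1}\}$ back into the formula for $\|\cdot\|_{\mathscr{S}}$. Optimizing over such $T$ yields the quasiconcavity inequalities — $\psi$ nondecreasing and $\psi(\lambda)/\lambda$ nonincreasing, each up to a fixed multiplicative constant — and a quasiconcave function is equivalent on a neighborhood of $\infty$ to its least concave majorant, which is precisely pseudoconcavity (and hence, by Proposition~\ref{L-1}, the interpolation-parameter property). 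The cleanest way to package this step is through Donoghue's integral representation of quadratic interpolation norms: the hypothesis produces a positive Radon measure $\rho$ on $[0,\infty]$ with $\psi(\lambda)^{2}=\int_{[0,\infty]}\frac{(1+t)\lambda^{2}}{\lambda^{2}+t}\,d\rho(t)$, an average of the elementary weights $\frac{(1+t)\lambda^{2}}{\lambda^{2}+t}$, whose extremes $t=0$ and $t=\infty$ return the norms of $\mathscr{H}_{0}$ and $\mathscr{H}_{1}$; since each such weight is quasiconcave in $\lambda$ and quasiconcavity is preserved under averaging, pseudoconcavity of $\psi$ follows at once. The technical points to watch are measure-theoretic: distinguishing atoms from the continuous part of $\nu$ when building the fiber-mixing operators, verifying that these operators are genuinely bounded on both endpoints, and checking that $\psi$ is Borel and may be adjusted on a $\nu$-null set to lie in $\mathcal{B}$ without changing $\mathscr{S}$.
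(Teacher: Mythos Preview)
The paper does not actually prove Proposition~\ref{L-2}; it merely records it as an auxiliary result and refers to theorem~11.4.1 of Ovchinnikov~\cite{O-84} for the proof. So there is no ``paper's own proof'' to compare against---what you have written is a sketch of the argument that lives in the cited reference (and, underneath it, in Donoghue's characterization of quadratic interpolation spaces for Hilbert couples).

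Your outline is in the right spirit and the endgame via Donoghue's integral representation is exactly the standard route to pseudoconcavity. One place to tighten: the passage from ``invariant under bounded Borel multipliers of $J$ and under fiberwise unitaries'' to ``the $\mathscr{S}$-norm is equivalent to $\int\psi(\lambda)^{2}\|f(\lambda)\|^{2}\,d\nu$'' is not quite as automatic as you make it sound. Invariance under spectral projections does give additivity over disjoint Borel sets, but to conclude that the resulting set function is absolutely continuous with respect to $\nu$ (so that a weight $\psi$ exists) you need the continuous embeddings $\mathscr{H}_{1}\hookrightarrow\mathscr{S}\hookrightarrow\mathscr{H}_{0}$ explicitly, and you should say how a Radon--Nikodym argument produces $\psi$. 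In practice the cleanest way to organize this is exactly what you allude to at the end: bypass the hand-built invariance argument and invoke Donoghue's theorem directly, which already delivers both the weighted form \emph{and} the integral representation $\psi(\lambda)^{2}=\int\frac{(1+t)\lambda^{2}}{\lambda^{2}+t}\,d\rho(t)$ in one stroke. Your rank-one transport operators are then unnecessary, since quasiconcavity is read off immediately from that formula.
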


For the proof of the following proposition, see theorem 4.2 in~\cite{MM-15}:

\begin{prop}\label{L-3-b} Let $s_0<s_1$ be two real numbers and let $\psi\in\mathcal{B}$.
Define
\begin{equation}\label{E:phi-psi}
\varphi(t):=t^{s_0}\psi(t^{s_1-s_0}),\quad t\geq 1.
\end{equation}
Then, the following are equivalent:
\begin{enumerate}
  \item [(i)] $\psi$ is an interpolation parameter;
  \item [(ii)] $\varphi$ satisfies~(\ref{E:RO-3}) with some constant $c\geq 1$ independent of $t$ and $\lambda$.
\end{enumerate}
\end{prop}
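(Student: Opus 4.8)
The plan is to reduce the claimed equivalence to the characterization of interpolation parameters via pseudoconcavity (Proposition~\ref{L-1}) and then to translate the pseudoconcavity of $\psi$ into the two-sided growth bound~(\ref{E:RO-3}) for $\varphi$ by a power substitution. Writing $r:=s_1-s_0>0$, so that $\varphi(t)=t^{s_0}\psi(t^r)$, I would first record the elementary identity
\[
\frac{\varphi(\lambda t)}{\varphi(t)}=\lambda^{s_0}\,\frac{\psi((\lambda t)^r)}{\psi(t^r)}=\lambda^{s_0}\,\frac{\psi(\lambda^r t^r)}{\psi(t^r)},\qquad t\ge 1,\ \lambda\ge 1.
\]
Substituting $\mu:=\lambda^r\ge 1$ and $\tau:=t^r\ge 1$ (both are increasing bijections of $[1,\infty)$ onto itself) and dividing~(\ref{E:RO-3}) through by $\lambda^{s_0}$, one sees that~(\ref{E:RO-3}) holds for $\varphi$ with constant $c$ if and only if
\begin{equation}
c^{-1}\le \frac{\psi(\mu\tau)}{\psi(\tau)}\le c\,\mu\qquad\text{for all } \tau\ge 1,\ \mu\ge 1. \tag{$\ast$}
\end{equation}
Thus condition~(ii) is equivalent to the purely multiplicative statement~($\ast$) about $\psi$.

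Next I would decode~($\ast$). Taking $\mu\ge 1$ in the left inequality gives $\psi(\tau)\le c\,\psi(\mu\tau)$, i.e. $\psi$ is \emph{almost increasing} on $[1,\infty)$; the right inequality, rewritten as $\psi(\mu\tau)/(\mu\tau)\le c\,\psi(\tau)/\tau$, says that $t\mapsto\psi(t)/t$ is \emph{almost decreasing} on $[1,\infty)$. Hence~($\ast$) is precisely the assertion that $\psi$ is quasi-concave, in the sense of being simultaneously almost increasing with $\psi(t)/t$ almost decreasing. The remaining and decisive step is the classical equivalence between quasi-concavity and pseudoconcavity: a function that is almost increasing with $\psi(t)/t$ almost decreasing near $\infty$ is comparable, up to two-sided multiplicative constants, to its least concave majorant, and conversely a positive concave function is, up to constants and on a neighborhood of $\infty$, almost increasing with almost-decreasing ratio $\psi(t)/t$. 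Concatenating this with Proposition~\ref{L-1} yields
\[
\psi \text{ is an interpolation parameter}\iff \psi \text{ is pseudoconcave near }\infty\iff (\ast)\iff \text{(ii)},
\]
which is the assertion of the proposition.

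Two technical points need care. First, the passage between the ``neighborhood of $\infty$'' formulation of pseudoconcavity and the global bound on $[1,\infty)$ in~($\ast$): since $\psi\in\mathcal{B}$, the function $\psi$ is bounded above on every compact subinterval of $(0,\infty)$ and, because $1/\psi$ is bounded on $(c,\infty)$ for every $c>0$, is also bounded below by a positive constant on each $[1,b]$. Sandwiching $\psi$ between positive constants on $[1,c]$ lets me absorb its behaviour there into the comparison constants and upgrade the local almost-monotonicities to the global form~($\ast$) (the reverse restriction being trivial). Second, the construction of the concave majorant and the proof that it is comparable to $\psi$ must use exactly the two almost-monotonicity conditions.

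I expect the main obstacle to be this last equivalence, quasi-concave $\iff$ pseudoconcave: establishing that the least concave majorant $\widehat{\psi}$ of a function satisfying~($\ast$) obeys $\psi\le\widehat{\psi}\le C\psi$ on a neighborhood of $\infty$ is the crux and is the one place where a genuine construction, rather than a substitution or bookkeeping, is required. Everything else — the power substitution giving (ii)$\iff$($\ast$), the reading of~($\ast$) as almost-monotonicity, and the $\mathcal{B}$-based reconciliation of local and global statements — is routine once this comparison is in hand, and the conclusion then follows by chaining the equivalences through Proposition~\ref{L-1}.
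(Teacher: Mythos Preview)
Your approach is correct, and in fact the paper does not prove this proposition at all: it is stated as an auxiliary result and attributed to theorem~4.2 in~\cite{MM-15}. So there is no ``paper's own proof'' to compare against; you have supplied an independent argument where the paper simply cites.

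Your reduction of~(\ref{E:RO-3}) to the condition~($\ast$) via the substitution $\tau=t^{s_1-s_0}$, $\mu=\lambda^{s_1-s_0}$ is exactly right, and the reading of~($\ast$) as ``$\psi$ almost increasing and $\psi(t)/t$ almost decreasing on $[1,\infty)$'' is the standard notion of quasi-concavity. You are also right that the only substantive step is the classical equivalence between quasi-concavity and pseudoconcavity near $\infty$ (comparability to the least concave majorant), after which Proposition~\ref{L-1} closes the loop. This is precisely the route taken in~\cite{MM-15}, so your sketch is in line with the literature the paper defers to; the local-to-global reconciliation using the $\mathcal{B}$-bounds is handled correctly as well.
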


The key instrument for justifying the implication (ii)$\implies$(i) of theorem~\ref{T:main-1} will be the next proposition (see the proof of the sufficiency part of theorem 2.4 in~\cite{MM-15}):

\begin{prop}\label{L-4} Assume that $\varphi\in RO$ satisfies the condition~(\ref{E:RO-3}). Define $\psi$ as follows:
\begin{equation}\label{E:phi-psi-inverse}
\psi(t):=\left\{\begin{array}{cc}
                  \tau^{-s_0/(s_1-s_0)}\varphi(\tau^{1/(s_1-s_0)}), & \tau\geq 1, \\
                  \varphi(1), & 0<\tau<1,
                \end{array}\right.,
\end{equation}
where $s_0<s_1$ are as in~(\ref{E:RO-3}). Then the following properties hold:
\begin{enumerate}
  \item [(i)] $\psi\in \mathcal{B}$ and $\psi$ satisfies~(\ref{E:phi-psi});
  \item [(ii)] $\psi$ is an interpolation parameter.
\end{enumerate}
\end{prop}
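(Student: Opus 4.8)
The plan is to dispatch (i) by a direct computation combined with the basic boundedness properties of $RO$-functions, and then to obtain (ii) as an immediate consequence of Proposition~\ref{L-3-b}, without invoking the pseudoconcavity criterion of Proposition~\ref{L-1}.

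For the identity~(\ref{E:phi-psi}) in part (i), I would substitute $\tau:=t^{s_1-s_0}$ for $t\geq 1$; since $s_1>s_0$ we have $\tau\geq 1$, so the upper branch of~(\ref{E:phi-psi-inverse}) applies, and from $\tau^{1/(s_1-s_0)}=t$ and $\tau^{-s_0/(s_1-s_0)}=t^{-s_0}$ one gets $\psi(t^{s_1-s_0})=t^{-s_0}\varphi(t)$, which is~(\ref{E:phi-psi}) after multiplying by $t^{s_0}$. To see $\psi\in\mathcal{B}$, note that Borel measurability and positivity are inherited from $\varphi$ through continuous substitutions. On a compact interval $[a,b]\subset(0,\infty)$ the lower branch contributes only the constant $\varphi(1)$, while for $\tau\geq 1$ the factor $\tau^{-s_0/(s_1-s_0)}$ is bounded and $\varphi(\tau^{1/(s_1-s_0)})$ is $\varphi$ evaluated on a compact subinterval of $[1,\infty)$, on which $\varphi$ is bounded by Proposition~1 of~\cite{MM-13} (quoted in Section~\ref{SS-1-8}); hence $\psi$ is bounded on $[a,b]$. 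For the boundedness of $1/\psi$ on $(c,\infty)$, I would apply~(\ref{E:RO-3}) with base point $1$ and $\lambda=\tau^{1/(s_1-s_0)}\geq 1$ to get $\varphi(\tau^{1/(s_1-s_0)})\geq c^{-1}\varphi(1)\,\tau^{s_0/(s_1-s_0)}$, so that $\psi(\tau)\geq c^{-1}\varphi(1)>0$ for all $\tau\geq 1$; since $\psi\equiv\varphi(1)$ on $(0,1)$, the function $\psi$ is bounded below by a positive constant on all of $(0,\infty)$, and $1/\psi$ is bounded. This proves (i).

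For (ii) I would simply cite Proposition~\ref{L-3-b}: part (i) gives $\psi\in\mathcal{B}$ together with the relation~(\ref{E:phi-psi}) for the given $s_0<s_1$, while by hypothesis $\varphi$ satisfies~(\ref{E:RO-3}); the implication (ii)$\implies$(i) of Proposition~\ref{L-3-b} then says precisely that $\psi$ is an interpolation parameter.

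The only place where the $RO$-hypothesis does real work is the lower bound in the verification that $1/\psi$ is bounded, and even there it is a one-line application of~(\ref{E:RO-3}). I therefore do not expect a genuine obstacle: the proof is essentially the bookkeeping in (i), followed by a direct citation of Proposition~\ref{L-3-b} for (ii).
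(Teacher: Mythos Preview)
Your argument is correct. The identity~(\ref{E:phi-psi}) is a straightforward substitution, the membership $\psi\in\mathcal{B}$ follows exactly as you say from the local boundedness of $\varphi$ together with the lower bound coming from the left inequality in~(\ref{E:RO-3}), and part~(ii) is indeed an immediate application of the implication (ii)$\Rightarrow$(i) in Proposition~\ref{L-3-b}.

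Note, however, that the paper does not give its own proof of this proposition: it is recorded as an auxiliary result with a pointer to the sufficiency part of Theorem~2.4 in~\cite{MM-15}. So there is no in-paper argument to compare against. Your choice to derive~(ii) from Proposition~\ref{L-3-b} rather than from the pseudoconcavity criterion (Proposition~\ref{L-1}) is a clean shortcut, since Proposition~\ref{L-3-b} already packages the equivalence between the $RO$-condition on $\varphi$ and the interpolation-parameter property of $\psi$; this avoids having to exhibit a concave comparison function for $\psi$ directly.
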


\section{Proof of Theorem~\ref{T:main-1}}\label{S:S-3}
We first prove a key proposition:
\begin{prop}\label{L-3} Let $s_0<s_1$ be two real numbers and let $\psi\in\mathcal{B}$ be an interpolation parameter. Let $\varphi$ be as in~(\ref{E:phi-psi}).

Then the following properties hold:
\begin{enumerate}
  \item [(i)] $\varphi\in RO$;
  \item [(ii)] we have (with equality of norms)
\begin{equation}\label{interp-r-n}
[H^{(s_0)}(\ZZ^n), H^{(s_1)}(\ZZ^n)]_{\psi}=H^{\varphi}(\ZZ^n).
\end{equation}
\end{enumerate}
\end{prop}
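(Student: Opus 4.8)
The plan is to exploit the fact that all the spaces in play are weighted $\ell^2$-spaces over $\ZZ^n$ with explicit diagonal weights, so that the generating operator of the admissible pair and the operator $\psi(J)$ can be written down concretely rather than abstractly. First I would verify that $[H^{(s_0)}(\ZZ^n),H^{(s_1)}(\ZZ^n)]$ is an admissible pair: by remark~\ref{R:weighted} each $H^{(s)}(\ZZ^n)$ is $\ell^2$ with weight $\langle k\rangle^{2s}$, the inclusion $H^{(s_1)}(\ZZ^n)\hookrightarrow H^{(s_0)}(\ZZ^n)$ is continuous (proposition 3.4 in~\cite{DK-20}, quoted before theorem~\ref{T:main-1}) and dense (remark~\ref{R:dense-sobolev}, since $\mathcal{S}(\ZZ^n)$ is dense in both). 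Then I would identify the generating operator $J$ of this pair: it must be the positive self-adjoint operator in $H^{(s_0)}(\ZZ^n)$ with $\dom(J)=H^{(s_1)}(\ZZ^n)$ and $\|Ju\|_{H^{(s_0)}}=\|u\|_{H^{(s_1)}}$. The natural candidate is multiplication by $\langle k\rangle^{s_1-s_0}$: indeed $\|\langle k\rangle^{s_1-s_0}u\|_{H^{(s_0)}} = \|\langle k\rangle^{s_1}u\| = \|u\|_{H^{(s_1)}}$, it is self-adjoint and positive (its spectrum is contained in $[1,\infty)$ since $\langle k\rangle\geq 1$) in $H^{(s_0)}(\ZZ^n)$, and its domain is exactly $H^{(s_1)}(\ZZ^n)$. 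By the uniqueness statement in section~\ref{SS-2-1}, this \emph{is} $J$.

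Next I would compute $\psi(J)$ by the spectral calculus. Since $J$ is the multiplication operator by the (scalar, positive) function $\langle k\rangle^{s_1-s_0}$ acting on the weighted $\ell^2$-space, functional calculus for multiplication operators gives that $\psi(J)$ is multiplication by $\psi(\langle k\rangle^{s_1-s_0})$ — here I need that $\psi\in\mathcal{B}$ guarantees $\psi$ is Borel measurable and, on the relevant range $[1,\infty)$ of values of $\langle k\rangle^{s_1-s_0}$, bounded on compacts with $1/\psi$ bounded near $\infty$, so the composition makes sense and $\psi(J)$ is a well-defined (generally unbounded) positive self-adjoint multiplication operator. Then for $u\in\dom(\psi(J))$,
\begin{equation*}
\|u\|_{\mathscr{H}_\psi}^2 = \|\psi(J)u\|_{H^{(s_0)}}^2 = \sum_{k\in\ZZ^n}\langle k\rangle^{2s_0}\,\bigl[\psi(\langle k\rangle^{s_1-s_0})\bigr]^2\,|u(k)|^2.
\end{equation*}
Setting $t=\langle k\rangle$ and invoking the definition $\varphi(t)=t^{s_0}\psi(t^{s_1-s_0})$ from~(\ref{E:phi-psi}), the summand is exactly $[\varphi(\langle k\rangle)]^2|u(k)|^2$, so $\|u\|_{\mathscr{H}_\psi}^2 = \|u\|_{H^\varphi(\ZZ^n)}^2$ with equality of norms, and the domains coincide since both are $\{u: \varphi(\langle k\rangle)u\in\ell^2\}$. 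This proves part (ii). For part (i), that $\varphi\in RO$, I would simply apply proposition~\ref{L-3-b}: since $\psi\in\mathcal{B}$ is an interpolation parameter, that proposition gives that $\varphi$ defined by~(\ref{E:phi-psi}) satisfies~(\ref{E:RO-3}), and a function $\varphi\colon[1,\infty)\to(0,\infty)$ satisfying~(\ref{E:RO-3}) is in particular Borel measurable (being built from the Borel function $\psi$) and satisfies~(\ref{E:RO-1}) by the equivalence recorded in section~\ref{SS-1-8}; hence $\varphi\in RO$.

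The main obstacle I anticipate is not any single deep step but rather the careful bookkeeping needed to justify the spectral-calculus identification rigorously: one must confirm that $\langle k\rangle^{s_1-s_0}$ genuinely has domain $H^{(s_1)}(\ZZ^n)$ as an operator \emph{in} $H^{(s_0)}(\ZZ^n)$ (not in $\ell^2$), that it is self-adjoint there (which follows because conjugating by the unitary $u\mapsto\langle k\rangle^{s_0}u$ from $H^{(s_0)}(\ZZ^n)$ to $\ell^2(\ZZ^n)$ turns it into the standard self-adjoint multiplication operator by $\langle k\rangle^{s_1-s_0}$ on $\ell^2$), and that $\psi(J)$ transported through the same unitary is multiplication by $\psi(\langle k\rangle^{s_1-s_0})$. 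The spectral measure of $J$ is purely atomic, supported on $\{\langle k\rangle^{s_1-s_0}: k\in\ZZ^n\}$, which makes the functional calculus completely transparent, so once the unitary equivalence with $\ell^2(\ZZ^n)$ is set up the rest is a direct computation; I would present that unitary equivalence explicitly to keep the argument clean.
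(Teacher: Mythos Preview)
Your proposal is correct and follows essentially the same route as the paper: identify the generating operator $J$ of the admissible pair as multiplication by $\langle k\rangle^{s_1-s_0}$, observe that $\psi(J)$ is then multiplication by $\psi(\langle k\rangle^{s_1-s_0})$, and read off the norm equality via~(\ref{E:phi-psi}); for part~(i) the paper simply cites lemma~1 of~\cite{MM-13}, which amounts to the same content as your appeal to proposition~\ref{L-3-b}. The only cosmetic difference is that the paper verifies the norm identity on $\mathcal{S}(\ZZ^n)$ and then invokes density (remark~\ref{R:dense-sobolev}), whereas you work directly on $\dom(\psi(J))$ and match domains---both are fine.
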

\begin{proof}
Part (i) was proved in lemma 1 of~\cite{MM-13}. To prove part (ii), we use the definition of the space
$[H^{(s_0)}(\ZZ^n), H^{(s_1)}(\ZZ^n)]_{\psi}$ from section~\ref{SS-2-1}. First, we recall (see proposition 3.4 in~\cite{DK-20} and remark~\ref{R:dense-sobolev}) that there is a continuous (and dense) embedding $H^{(s_1)}(\ZZ^n)\hookrightarrow H^{(s_0)}(\ZZ^n)$. Thus, $[H^{(s_0)}(\ZZ^n), H^{(s_1)}(\ZZ^n)]$ is an admissible pair.

Remembering the definitions of the norms in $H^{(s_0)}(\ZZ^n)$ and $H^{(s_1)}(\ZZ^n)$ and taking into account remark~\ref{R:weighted}, we see that the multiplication operator $Ju:=\langle k\rangle^{s_1-s_0}u$ by the function $\langle k\rangle^{s_1-s_0}$ constitutes a generating operator for the pair $[H^{(s_0)}(\ZZ^n), H^{(s_1)}(\ZZ^n)]$.

As $J$ is a multiplication operator by the function $\langle k\rangle^{s_1-s_0}$, it follows that $\psi(J)$ is a multiplication operator by the function $\psi(\langle k\rangle^{s_1-s_0})$.
Denoting
\begin{equation*}
\mathscr{H}:=[H^{(s_0)}(\ZZ^n), H^{(s_1)}(\ZZ^n)]
\end{equation*}
and referring to the definitions~(\ref{E:H-psi}), (\ref{D:sob-sp}) and~(\ref{E:sob-phi-2}), for all $u\in S(\ZZ^n)$ we have
\begin{align}
&\|u\|^2_{\mathscr{H}_{\psi}}=\|\psi(J)u\|^2_{H^{(s_0)}}=\|\psi(\langle k\rangle^{s_1-s_0})u\|^2_{H^{(s_0)}}\nonumber\\
&=\|\langle k\rangle^{s_0}\psi(\langle k\rangle^{s_1-s_0})u\|^2=\|\varphi(\langle k\rangle)u\|^2=\|u\|^2_{H^{\varphi}},
\end{align}
where the fourth equality follows from~(\ref{E:phi-psi}).
Thus, we obtain~(\ref{interp-r-n}) with equality of norms. (By remark~\ref{R:dense-sobolev}, Schwartz space $S(\ZZ^n)$ is dense in the spaces $H^{(s_j)}(\ZZ^n)$, $j=0,1$, and, hence, in the interpolation space $[H^{(s_0)}(\ZZ^n), H^{(s_1)}(\ZZ^n)]_{\psi}$.) This concludes the proof of the proposition.
\end{proof}

\noindent\textbf{Continuation of the Proof of Theorem~\ref{T:main-1}}

We first prove that (i) implies (ii). Since $\mathscr{S}$ is an interpolation space with respect to the pair $[H^{(s_0)}(\ZZ^n), H^{(s_1)}(\ZZ^n)]$, for some $s_0<s_1$, we may apply proposition~\ref{L-2} to infer $\mathscr{S}=\mathscr{H}_{\psi}$, up to norm equivalence, where $\psi\in\mathcal{B}$ is pseudoconcave in a neighborhood of $\infty$. (By proposition~\ref{L-1}, this means that $\psi$ is an interpolation parameter.) With $\psi\in \mathcal{B}$ at our disposal, define $\varphi$ by the formula~(\ref{E:phi-psi}). Next we apply proposition~\ref{L-3-b} to infer that the function $\varphi$ satisfies the condition~(\ref{E:RO-3}), and, hence, $\varphi\in RO$.  To conclude the proof of the implication, it remains to use proposition~\ref{L-3}.

We now prove that (ii) implies (i). Let $\varphi\in RO$ be a function satisfying the condition~(\ref{E:RO-3}), and define $\psi$ as in~(\ref{E:phi-psi-inverse}), with $s_0<s_1$ as in~(\ref{E:RO-3}). By proposition~\ref{L-4} it follows that $\psi\in\mathcal{B}$ and $\psi$ is an interpolation parameter, and, moreover, $\psi$ satisfies the condition~(\ref{E:phi-psi}). To conclude the proof of the implication, it remains to apply proposition~\ref{L-3}. $\hfill\square$

\section{Proof of Corollary~\ref{C:cor-1}}\label{S:S-4}
The implication (i)$\implies$(ii) follows directly from theorem~\ref{T:main-1}. We now prove the implication (ii)$\implies$(i). Let $\varphi\in RO$ and let $s_0<\sigma_0(\varphi)$ and $s_1>\sigma_1(\varphi)$. Then $\varphi$ satisfies the condition~(\ref{E:RO-3}). Thus, by theorem~\ref{T:main-1}, we have (up to norm equivalence) that $H^{\varphi}(\ZZ^n)$ is an interpolation space with respect to the pair
$[H^{(s_0)}(\ZZ^n), H^{(s_1)}(\ZZ^n)]$. Hence, up to norm equivalence, $H^{\varphi}(\ZZ^n)$ is an interpolation space with respect to the scale $\{H^{(s)}(\ZZ^n)\colon s\in\RR\}$. $\hfill\square$

\section{Proof of Theorem~\ref{T:main-2}}\label{S:S-5}
By proposition~\ref{L-4} we have $\psi\in\mathcal{B}$ and $\psi$ is an interpolation parameter. Furthermore, by the same proposition, $\psi$ satisfies~(\ref{E:phi-psi}). Therefore, theorem~\ref{T:main-2} follows from proposition~\ref{L-3}. $\hfill\square$

\section{Proof of Theorem~\ref{T:main-3}}\label{S:S-6} With theorem~\ref{T:main-2} at our disposal, the proof of theorem~\ref{T:main-3} we follows the scheme of~\cite{MM-15} (see theorem 5.2 there) for the case $\Omega\subset\RR^n$, where $\Omega$ is a bounded domain with Lipschitz boundary.

The following proposition can be found in~\cite{MM-14} (see theorem 1.3 there):

\begin{prop}\label{P-MAIN-3-1} Assume that $\lambda,\eta,\psi\in\mathcal{B}$ and that $\frac{\lambda}{\eta}$ is bounded in a neighbourhood
of $\infty$. Let $\mathscr{H}$ be an admissible pair of Hilbert spaces. Then, with the notations of section~\ref{SS-2-1}, we have

\begin{enumerate}
  \item [(i)] $[\mathscr{H}_{\lambda}, \mathscr{H}_{\eta}]$ is an admissible pair;
  \item [(ii)] we have $[\mathscr{H}_{\lambda}, \mathscr{H}_{\eta}]_{\psi}= \mathscr{H}_{\omega}$, with norm equality, where

\begin{equation}\label{E:eqn-def-greek}
\omega(t) := \lambda(t) \psi\left(\frac{\eta(t)}{\lambda(t)}\right),\qquad t>0.
\end{equation}
\item [(iii)] If $\lambda,\eta,\psi\in\mathcal{B}$ are interpolation parameters, then so is $\omega$.
\end{enumerate}
\end{prop}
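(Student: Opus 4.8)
The plan is to reduce the whole statement to spectral calculus for the single generating operator $J$ of $\mathscr{H}=[\mathscr{H}_{0},\mathscr{H}_{1}]$ and to transport all structure through one canonical isometric isomorphism. Recall that $J$ is positive self-adjoint in $\mathscr{H}_{0}$ with $\dom(J)=\mathscr{H}_{1}$; since $\mathscr{H}_{1}\hookrightarrow\mathscr{H}_{0}$ is continuous, $\|u\|_{\mathscr{H}_{0}}\le C\|Ju\|_{\mathscr{H}_{0}}$ on $\dom(J)$, so $\textrm{Spec}(J)\subseteq[\kappa,\infty)$ for some $\kappa>0$. Consequently, for every $\chi\in\mathcal{B}$ the operator $\chi(J)$ is positive self-adjoint in $\mathscr{H}_{0}$, the norm on $\mathscr{H}_{\chi}=\dom(\chi(J))$ equals $\|\chi(J)\cdot\|_{\mathscr{H}_{0}}$, and — using the defining boundedness properties of $\mathcal{B}$-functions on $[\kappa,\infty)$ — the map $\chi(J)\colon\mathscr{H}_{\chi}\to\mathscr{H}_{0}$ is an isometric isomorphism with inverse $(1/\chi)(J)$.

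First I would prove (i). Put $\Lambda:=\lambda(J)$, an isometric isomorphism of $\mathscr{H}_{\lambda}$ onto $\mathscr{H}_{0}$. For $u\in\dom(\eta(J))$ one has $\|u\|_{\mathscr{H}_{\eta}}=\|\eta(J)u\|_{\mathscr{H}_{0}}=\|(\eta/\lambda)(J)\,\Lambda u\|_{\mathscr{H}_{0}}$, so $\Lambda$ carries the pair $[\mathscr{H}_{\lambda},\mathscr{H}_{\eta}]$ isometrically onto $[\mathscr{H}_{0},\dom((\eta/\lambda)(J))]$, the second space carrying the norm $\|(\eta/\lambda)(J)\cdot\|_{\mathscr{H}_{0}}$. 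Since $\lambda/\eta$ is bounded near $\infty$, one checks that $\eta/\lambda\in\mathcal{B}$; hence $[\mathscr{H}_{0},\dom((\eta/\lambda)(J))]$ is an admissible pair (continuity and density of the embedding come from the general construction of section~\ref{SS-2-1}) with generating operator $(\eta/\lambda)(J)$. Transporting back through $\Lambda$, the pair $[\mathscr{H}_{\lambda},\mathscr{H}_{\eta}]$ is admissible, which is (i), and its generating operator is $B:=\Lambda^{-1}(\eta/\lambda)(J)\Lambda=(\eta/\lambda)(J)\big|_{\mathscr{H}_{\lambda}}$ (the operators commute, being functions of $J$).

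For (ii), by definition $[\mathscr{H}_{\lambda},\mathscr{H}_{\eta}]_{\psi}=\dom(\psi(B))$ with norm $\|\psi(B)\cdot\|_{\mathscr{H}_{\lambda}}$, and the spectral composition rule gives $\psi(B)=\Lambda^{-1}(\psi\circ(\eta/\lambda))(J)\,\Lambda$. Hence for $u$ in this space
\begin{equation*}
\|u\|_{[\mathscr{H}_{\lambda},\mathscr{H}_{\eta}]_{\psi}}=\|(\psi\circ(\eta/\lambda))(J)\,\Lambda u\|_{\mathscr{H}_{0}}=\|\,[\lambda\cdot(\psi\circ(\eta/\lambda))](J)\,u\,\|_{\mathscr{H}_{0}}=\|\omega(J)u\|_{\mathscr{H}_{0}},
\end{equation*}
with $\omega$ as in~(\ref{E:eqn-def-greek}). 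It then remains to verify that $\omega\in\mathcal{B}$ (routine, from boundedness of $\lambda,\eta$ on compacta, of $1/\lambda,1/\eta,1/\psi$ near $\infty$, and of $\lambda/\eta$ near $\infty$) and that $\dom(\psi(B))=\dom(\omega(J))=\mathscr{H}_{\omega}$; the last equality is spectral-measure bookkeeping, the point being that $\psi\circ(\eta/\lambda)$ is bounded below on $\textrm{Spec}(J)$, so $\omega(J)u\in\mathscr{H}_{0}$ already forces $u\in\dom(\lambda(J))=\mathscr{H}_{\lambda}$. This yields (ii) with equality of norms.

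Finally, (iii) is a formal consequence of (i), (ii) and the hypotheses. Given admissible pairs $\mathscr{K},\mathscr{L}$ and a linear $T$ bounded $\mathscr{K}_{0}\to\mathscr{L}_{0}$ and $\mathscr{K}_{1}\to\mathscr{L}_{1}$: since $\lambda,\eta$ are interpolation parameters, $T$ is bounded $\mathscr{K}_{\lambda}\to\mathscr{L}_{\lambda}$ and $\mathscr{K}_{\eta}\to\mathscr{L}_{\eta}$; by (i) the pairs $[\mathscr{K}_{\lambda},\mathscr{K}_{\eta}]$ and $[\mathscr{L}_{\lambda},\mathscr{L}_{\eta}]$ are admissible, so since $\psi$ is an interpolation parameter, $T$ is bounded $[\mathscr{K}_{\lambda},\mathscr{K}_{\eta}]_{\psi}\to[\mathscr{L}_{\lambda},\mathscr{L}_{\eta}]_{\psi}$; by (ii), with $\omega$ depending only on $\lambda,\eta,\psi$, this says $T\colon\mathscr{K}_{\omega}\to\mathscr{L}_{\omega}$ is bounded. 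Hence $\omega$ is an interpolation parameter. I expect the genuine difficulty to lie in (i): justifying that $\lambda(J)$ is an isometric isomorphism $\mathscr{H}_{\lambda}\to\mathscr{H}_{0}$, checking $\eta/\lambda,\omega\in\mathcal{B}$, and — the delicate point — correctly matching domains of unbounded functions of $J$ after transport, where one must argue with the spectral measure of $J$ rather than with formal operator products.
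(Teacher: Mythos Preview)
The paper does not prove this proposition; it simply quotes it from the literature (theorem~1.3 in~\cite{MM-14}), so there is no proof in the paper to compare your argument against. Your spectral-calculus approach---transporting the pair $[\mathscr{H}_{\lambda},\mathscr{H}_{\eta}]$ through the isometric isomorphism $\lambda(J)\colon\mathscr{H}_{\lambda}\to\mathscr{H}_{0}$, identifying the new generating operator as $(\eta/\lambda)(J)$, and then composing---is precisely the standard argument for this result and is what one finds in the cited reference. The checks you flag (that $\eta/\lambda\in\mathcal{B}$, that $\omega\in\mathcal{B}$, and the domain matching) all go through using only the defining properties of $\mathcal{B}$ together with $\textrm{Spec}(J)\subseteq[\kappa,\infty)$; your outline is correct and complete in spirit.
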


The first step in the proof of theorem~\ref{T:main-3} is to choose numbers $s_0,\,s_1$ such that $s_0<\sigma_0(\varphi_j)$ and $s_1>\sigma_1(\varphi_j)$, $j=0,1$. With these $s_0$ and $s_1$, we define $\lambda_j$,  $j=0,1$, as
\begin{equation}\label{E:phi-psi-inverse-sigma}
\lambda_j(t):=\left\{\begin{array}{cc}
                  \tau^{-s_0/(s_1-s_0)}\varphi_j(\tau^{1/(s_1-s_0)}), & \tau\geq 1, \\
                  \varphi_j(1), & 0<\tau<1.
                \end{array}\right.
\end{equation}

Notice that $\frac{\lambda_0}{\lambda_1}$ is bounded in a neighborhood of $\infty$.

By theorem~\ref{T:main-2} we have
\begin{equation}\label{E:temp-3-1}
[[H^{(s_0)}(\ZZ^n), H^{(s_1)}(\ZZ^n)]_{\lambda_0},[H^{(s_0)}(\ZZ^n), H^{(s_1)}(\ZZ^n)]_{\lambda_1}]=[H^{\varphi_0}(\ZZ^n),H^{\varphi_1}(\ZZ^n)].
\end{equation}
As the left hand side displays an admissible pair (see part (i) of proposition~\ref{P-MAIN-3-1}), the pair on the right hand side is also admissible. This proves part (i) of theorem~\ref{T:main-3}.

Let $\psi$ be as in the hypothesis of theorem~\ref{T:main-3}. Returning to~(\ref{E:temp-3-1}) and interpolating with a function parameter $\psi$, we get (after appealing to part (ii) of proposition~\ref{P-MAIN-3-1} with $\mathscr{H}=[H^{(s_0)}(\ZZ^n), H^{(s_1)}(\ZZ^n)]$)

\begin{equation}\label{E:temp-3-2}
[H^{(s_0)}(\ZZ^n), H^{(s_1)}(\ZZ^n)]_{\omega}=[H^{\varphi_0}(\ZZ^n),H^{\varphi_1}(\ZZ^n)]_{\psi},
\end{equation}
where (the interpolation parameter) $\omega$ is given by
\begin{equation}\label{E:phi-psi-inverse-sigma-1}
\omega(t):=\psi_0(t) \psi\left(\frac{\psi_1(t)}{\psi_0(t)}\right),\qquad t\geq 1.
\end{equation}

The same way as it was done in the proof of theorem 5.2 of~\cite{MM-15}, one can check that the function $\varphi$ from~(\ref{E:quad-int}) satisfies the condition
\begin{equation*}
\varphi(t)=t^{s_0}\omega(t^{s_1-s_0}),\quad t\geq 1.
\end{equation*}

Therefore, by proposition~\ref{L-3-b} we have $\varphi\in RO$. Consequently, by proposition~\ref{L-3} we obtain
\begin{equation}\nonumber.
[H^{(s_0)}(\ZZ^n), H^{(s_1)}(\ZZ^n)]_{\omega}=H^{\varphi}(\ZZ^n),
\end{equation}
which, upon taking into account~(\ref{E:temp-3-2}), leads to~(\ref{E:t-3-main}). $\hfill\square$

\section{Proof of Theorem~\ref{T:main-4}}\label{S:S-7} With theorem~\ref{T:main-2} and remark~\ref{R:dense-sobolev} at our disposal, the proofs of properties (i)--(iv) are relatively straightforward. We give the details below:

To prove part (i), start with $\varphi\in RO$ and select numbers $s_0,\,s_1$ such that $s_0<\sigma_0(\varphi)$ and $s_1>\sigma_1(\varphi)$. Defining $\psi$ as in~(\ref{E:phi-psi-inverse}), we have by theorem~\ref{T:main-2} that $\psi\in\mathcal{B}$ is an interpolation parameter and have $[H^{(s_0)}(\ZZ^n), H^{(s_1)}(\ZZ^n)]_{\psi}=H^{\varphi}(\ZZ^n)$, with equality of norms. Referring to part (i) of the definition of the interpolation space (see section~\ref{SS-1-11}), we have a continuous embedding
\begin{equation*}
H^{(s_1)}(\ZZ^n)\hookrightarrow H^{\varphi}(\ZZ^n)\hookrightarrow H^{(s_0)}(\ZZ^n).
\end{equation*}
Now the density of $S(\ZZ^n)$ in $H^{\varphi}(\ZZ^n)$ follows from remark~\ref{R:dense-sobolev} above.

To prove part (ii), first note that hypotheses of theorem~\ref{T:main-3} are satisfied. Thus,
$[H^{\varphi_0}(\ZZ^n),H^{\varphi_1}(\ZZ^n)]$ is an admissible pair, and we have a continuous embedding $H^{\varphi_1}(\ZZ^n)\hookrightarrow H^{\varphi_0}(\ZZ^n)$.

For part (iii), we proceed similarly as in the proof of theorem 4.4 in~\cite{Z-17} for the ``refined Sobolev scale" on a compact manifold. First, according to lemma~\ref{L:pairing-sob} below, for all $s\in\RR$ the sesquilinear form~(\ref{E:inner-l-2}) extends to a sesquilinear duality (separately continuous sesquilinear form)
\begin{equation}\label{E:inner-product-h-phi-reg}
(\cdot,\cdot)\colon H^{(s)}(\ZZ^n)\times H^{(-s)}(\ZZ^n)\to\mathbb{C}.
\end{equation}
The spaces $H^{(s)}(\ZZ^n)$ and $H^{(-s)}(\ZZ^n)$ are dual relative to the duality~(\ref{E:inner-product-h-phi-reg}): for each $s\in \RR$, the map $f(u):=(u,\cdot)$, with $u\in H^{(s)}(\ZZ^n)$, is an isomorphism $f\colon H^{(s)}(\ZZ^n)\to (\overline{H^{(-s)}(\ZZ^n)})'$, where $(\overline{H^{(-s)}(\ZZ^n)})'$ is the anti-dual space of $H^{(-s)}(\ZZ^n)$.

Let $\varphi\in RO$, let $s_0<\sigma_0(\varphi)$, $s_1>\sigma_1(\varphi)$, and let $\psi$ be as in~(\ref{E:phi-psi-inverse}). Thus, we have an isomorphism
\begin{equation}\label{E:4-1-in}
f\colon [H^{(s_0)}(\ZZ^n),H^{(s_1)}(\ZZ^n)]_{\psi}=[(\overline{H^{(-s_0)}(\ZZ^n)})',(\overline{H^{(-s_1)}(\ZZ^n)})']_{\psi}.
\end{equation}
Moreover, by (abstract) theorem 1.4 in~\cite{MM-14}, we have
\begin{equation}\label{E:4-2-in}
[(\overline{H^{(-s_0)}(\ZZ^n)})',(\overline{H^{(-s_1)}(\ZZ^n)})']_{\psi}
=\left(\overline{[(H^{(-s_1)}(\ZZ^n)),(H^{(-s_0)}(\ZZ^n))]_{\widetilde{\psi}}}\right)',
\end{equation}
where $\widetilde{\psi}:=\frac{t}{\psi(t)}$. Appealing again to theorem 1.4 from~\cite{MM-14}, $\widetilde{\psi}\in\mathcal{B}$ is an interpolation parameter.

Since $\psi$ satisfies~(\ref{E:phi-psi}) (as guaranteed by proposition~\ref{L-4}), a quick check shows that
$\widetilde{\varphi}:=\frac{1}{\varphi}$ satisfies
\begin{equation}\nonumber
\widetilde{\varphi}(t)=t^{-s_1}\widetilde{\psi}(t^{-s_0-(-s_1)}).
\end{equation}
Therefore, by proposition~\ref{L-3}, we have (with equality of norms)
\begin{equation}\label{E:4-3-in}
[H^{(-s_1)}(\ZZ^n),H^{(-s_0)}(\ZZ^n)]_{\widetilde{\psi}}=H^{\frac{1}{\varphi}}(\ZZ^n).
\end{equation}
Combining~(\ref{E:4-1-in}),~(\ref{E:4-2-in}),~(\ref{E:4-3-in}) and recalling (see theorem~\ref{T:main-2}) that
\begin{equation}\nonumber
H^{\varphi}(\ZZ^n)=[H^{(s_0)}(\ZZ^n),H^{(s_1)}(\ZZ^n)]_{\psi},
\end{equation}
we get an isomorphism
\begin{equation}\nonumber
f\colon H^{\varphi}(\ZZ^n)\to \left(\overline{H^{\frac{1}{\varphi}}(\ZZ^n)}\right)'.
\end{equation}
Thus, up to norm equivalence, the spaces  $H^{\varphi}(\ZZ^n)$ and $H^{\frac{1}{\varphi}}(\ZZ^n)$ are mutually dual with respect to
the sesquilinear form~(\ref{E:inner-product-h-phi}).

Recall that lemma~\ref{L:pairing-sob} tells us that~(\ref{E:inner-product-h-phi-reg}) is an extension by continuity of the form~(\ref{E:inner-l-2}). Additionally, recall that we have continuous embeddings $H^{(s_1)}(\ZZ^n)\hookrightarrow H^{\varphi}(\ZZ^n)$ and $H^{(-s_0)}(\ZZ^n) \hookrightarrow H^{\frac{1}{\varphi}}(\ZZ^n)$. Therefore,  the form~(\ref{E:inner-product-h-phi}) is an extension by continuity of the form~(\ref{E:inner-l-2}).

To prove part (iv), note that
\begin{align}
&|u(k)|\leq \sum_{k\in\ZZ^n} |u(k)|=\sum_{k\in\ZZ^n}|u(k)|\varphi(\langle k \rangle)[\varphi(\langle k \rangle)]^{-1}\nonumber\\
&\leq \left(\sum_{k\in\ZZ^n}[\varphi(\langle k \rangle)]^{-2}\right)^{1/2}\left(\sum_{k\in\ZZ^n}|u(k)|^2[\varphi(\langle k \rangle)]^2\right)^{1/2}\nonumber\\
&=C\|u\|_{H^{\varphi}},\nonumber
\end{align}
where $0<C<\infty$ is a constant. Here, in the second estimate we used Cauchy--Schwarz inequality, and in the last equality we used the assumption~(\ref{E:sum-phi-hyp}) and the definition~(\ref{E:sob-phi-2}).

Therefore, we get $\|u\|_{\infty}\leq C\|u\|_{H^{\varphi}}$, and this concludes the proof of part (iv). $\hfill\square$

\section{Proof of Theorem~\ref{T:main-5}}\label{S:S-8}
Let $\varphi\in RO$ and let $s_0<\sigma_0(\varphi)$ and $s_1>\sigma_1(\varphi)$. Let $T_{a}=\textrm{Op}[a]$ with $a\in S^{m}(\ZZ^n\times \TT^n)$, where $m\in\RR$. Then, according to corollary 3.3 in~\cite{DK-20} (or corollary 5.6 in~\cite{BRK-20}),
\begin{equation}\label{E:bdd-A-kordyukov}
T_{a}\colon H^{(s_0)}(\ZZ^n)\to H^{(s_0-m)}(\ZZ^n),\qquad T_{a}\colon H^{(s_1)}(\ZZ^n)\to H^{(s_1-m)}(\ZZ^n)
\end{equation}
are bounded linear operators.

Defining $\psi$ as in~(\ref{E:phi-psi-inverse}) and referring to proposition~\ref{L-4} we see that $\psi\in\mathcal{B}$ is an interpolation parameter satisfying~(\ref{E:phi-psi}).  Furthermore, by proposition~\ref{L-3} we have (with equality of norms)
\begin{equation}\label{E:interp-A-1}
[H^{(s_0)}(\ZZ^n), H^{(s_1)}(\ZZ^n)]_{\psi}=H^{\varphi}(\ZZ^n).
\end{equation}

Setting $\widetilde{\varphi}:=t^{-m}\varphi$, we see that~(\ref{E:phi-psi}) can be written as
\begin{equation}\nonumber
\varphi(t)=t^{s_0-m}t^{m}\psi(t^{(s_1-m)-(s_0-m)}),
\end{equation}
that is,
\begin{equation}\nonumber
\widetilde{\varphi}(t)=t^{s_0-m}\psi(t^{(s_1-m)-(s_0-m)}).
\end{equation}

Thus, we may use proposition~\ref{L-3} with $s_0-m$ and $s_1-m$ in place of $s_0$ and $s_1$ respectively and with $\widetilde{\varphi}=t^{-m}\varphi$ in place of $\varphi$. Therefore, we get (with equality of norms)
\begin{equation}\label{E:interp-A-2}
[H^{(s_0-m)}(\ZZ^n), H^{(s_1-m)}(\ZZ^n)]_{\psi}=H^{t^{-m}\varphi}(\ZZ^n).
\end{equation}

Looking at~(\ref{E:bdd-A-kordyukov}), keeping in mind the properties~(\ref{E:interp-A-1})--(\ref{E:interp-A-2}), and remembering the definition of interpolation space (see section~\ref{SS-1-11}), we
get a bounded linear operator
\begin{equation*}
T_{a}\colon H^{\varphi}(\ZZ^n)\to H^{t^{-m}\varphi}(\ZZ^n),
\end{equation*}
and this concludes the proof of the theorem. $\hfill\square$

\section{Proof of Theorem~\ref{T:main-5-f}}\label{S:S-8-f}
Let $\mathscr{B}_{1}$ and $\mathscr{B}_{2}$ be Banach spaces and let $T\colon \mathscr{B}_{1}\to \mathscr{B}_{2}$ be a bounded linear operator.
The notation $(\mathscr{B}_{j})^{*}$ indicates the dual space of $\mathscr{B}_{j}$, $j=1,2$, and the notation $(v,f)_{j}$ describes the action of a functional $f\in (\mathscr{B}_{j})^{*}$ on a vector $v\in \mathscr{B}_{j}$ (Here, the form $(\cdot,\cdot)_{j}$ is linear in the first variable and conjugate linear in the second variable.)

By the adjoint of $T$ we mean a (bounded linear) operator $T^*\colon (\mathscr{B}_{2})^{*}\to(\mathscr{B}_{1})^{*}$ defined as follows:
\begin{equation}\label{E:def-fredh}
(Tu,f)_{2}=(u, T^*f)_{1}, \qquad f\in (\mathscr{B}_{2})^{*},\,\, u\in\mathscr{B}_{1}.
\end{equation}

We now recall an abstract fact (see theorem 2.3.11 in~\cite{agran-90}) regarding Fredholm operators (as defined in section~\ref{SS-1-19-f} above).

\begin{prop}\label{P:agran-90} Let $\mathscr{B}_{1}$ and $\mathscr{B}_{2}$ be Banach spaces and let $T\colon \mathscr{B}_{1}\to \mathscr{B}_{2}$ be a Fredholm operator. Then, $T^*$ is a Fredholm operator.

Furthermore, the following properties hold:
\begin{itemize}
  \item [(i)] $\textrm{Ran } T=\{v\in \mathscr{B}_{2}\colon (v,g)_{2}=0,\,\,\textrm{for all }g\in \textrm{Ker }T^*\}$, where $(\cdot,\cdot)_{2}$ is as in~(\ref{E:def-fredh});
  \item [(ii)] $\textrm{Ker } T^{*}=\{g\in (\mathscr{B}_{2})^{*}\colon (v,g)_{2}=0,\,\,\textrm{for all }v\in \textrm{Ran }T\}$, where $(\cdot,\cdot)_{2}$ is as in~(\ref{E:def-fredh});
  \item [(iii)] the index of $T$ is $\kappa(T)=\dim (\textrm{Ker }T)-\dim (\textrm{Ker }T^*)$.
\end{itemize}
 \end{prop}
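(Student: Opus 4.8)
The statement is the standard Banach-space duality theory for Fredholm operators, so the plan is to reconstruct it from the Hahn--Banach annihilator relations and the closed range theorem rather than merely to cite~\cite{agran-90}. Throughout, for a subspace $M\subseteq\mathscr{B}_2$ write $M^{\perp}:=\{g\in(\mathscr{B}_2)^{*}\colon (v,g)_2=0\text{ for all }v\in M\}$ for its annihilator, and for a subspace $N\subseteq(\mathscr{B}_2)^{*}$ write ${}^{\perp}N:=\{v\in\mathscr{B}_2\colon (v,g)_2=0\text{ for all }g\in N\}$ for its pre-annihilator; the conjugate-linearity of $(\cdot,\cdot)_2$ in the second slot is immaterial, as it affects neither these relations (each remains a complex-linear subspace) nor any of the complex dimensions counted below.

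First I would dispatch (ii) and (i) by pure duality. Unwinding the defining relation~(\ref{E:def-fredh}), a functional $g\in(\mathscr{B}_2)^{*}$ lies in $\textrm{Ker }T^{*}$ precisely when $(u,T^{*}g)_1=0$ for every $u\in\mathscr{B}_1$, i.e.\ when $(Tu,g)_2=0$ for every $u$, i.e.\ when $g\in(\textrm{Ran }T)^{\perp}$; thus $\textrm{Ker }T^{*}=(\textrm{Ran }T)^{\perp}$, which is exactly (ii). For (i), since $T$ is Fredholm, $\textrm{Ran }T$ is a closed subspace of $\mathscr{B}_2$, so the Hahn--Banach theorem yields the bipolar identity ${}^{\perp}(M^{\perp})=M$ for the closed subspace $M=\textrm{Ran }T$. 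Combining this with the previous step gives $\textrm{Ran }T={}^{\perp}(\textrm{Ker }T^{*})=\{v\in\mathscr{B}_2\colon (v,g)_2=0\text{ for all }g\in\textrm{Ker }T^{*}\}$, which is (i).

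It remains to verify that $T^{*}$ is Fredholm and to compute the index. The closed range theorem supplies the two inputs I would invoke: $\textrm{Ran }T$ closed implies $\textrm{Ran }T^{*}$ closed, together with $\textrm{Ran }T^{*}=(\textrm{Ker }T)^{\perp}$. For the kernel, the identity $\textrm{Ker }T^{*}=(\textrm{Ran }T)^{\perp}$ combined with the canonical isomorphism $(\textrm{Ran }T)^{\perp}\cong(\mathscr{B}_2/\textrm{Ran }T)^{*}=(\textrm{Coker }T)^{*}$, valid because $\textrm{Ran }T$ is closed, gives $\dim\textrm{Ker }T^{*}=\dim\textrm{Coker }T<\infty$. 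Dually, $\textrm{Ran }T^{*}=(\textrm{Ker }T)^{\perp}$ forces $\textrm{Coker }T^{*}=(\mathscr{B}_1)^{*}/(\textrm{Ker }T)^{\perp}\cong(\textrm{Ker }T)^{*}$, so $\dim\textrm{Coker }T^{*}=\dim\textrm{Ker }T<\infty$, the finiteness coming from the Fredholmness of $T$. Hence $T^{*}$ has finite-dimensional kernel, closed range, and finite-dimensional cokernel, i.e.\ $T^{*}$ is Fredholm. Substituting $\dim\textrm{Coker }T=\dim\textrm{Ker }T^{*}$ into the definition $\kappa(T)=\dim\textrm{Ker }T-\dim\textrm{Coker }T$ then yields $\kappa(T)=\dim\textrm{Ker }T-\dim\textrm{Ker }T^{*}$, which is (iii).

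The main obstacle is the sole nontrivial analytic ingredient, namely the closed range theorem in the form ``$\textrm{Ran }T$ closed $\iff$ $\textrm{Ran }T^{*}$ closed, with $\textrm{Ran }T^{*}=(\textrm{Ker }T)^{\perp}$''; everything else is the linear algebra of annihilators together with Hahn--Banach. In the present situation one could even sidestep the general theorem: $\textrm{Ran }T$ is already known to be closed and of finite codimension, hence complemented (a finite-dimensional algebraic complement is automatically closed and topologically complementary), and a bounded operator with finite-dimensional kernel onto such a complemented closed subspace admits a direct-sum decomposition from which the closedness of $\textrm{Ran }T^{*}$ and the annihilator description of $\textrm{Ran }T^{*}$ follow by elementary means.
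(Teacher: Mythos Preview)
Your argument is correct: the identifications $\textrm{Ker }T^{*}=(\textrm{Ran }T)^{\perp}$ and $\textrm{Ran }T^{*}=(\textrm{Ker }T)^{\perp}$, together with the bipolar identity for the closed subspace $\textrm{Ran }T$ and the isomorphisms $(\textrm{Ran }T)^{\perp}\cong(\textrm{Coker }T)^{*}$ and $(\mathscr{B}_1)^{*}/(\textrm{Ker }T)^{\perp}\cong(\textrm{Ker }T)^{*}$, yield (i)--(iii) and the Fredholmness of $T^{*}$ exactly as you write. Your remark that the conjugate-linearity in the second slot does not affect the annihilator subspaces or the dimension counts is also to the point.

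However, note that the paper does not prove this proposition at all: it is stated as a recalled ``abstract fact'' with a reference to theorem~2.3.11 in~\cite{agran-90}, and no argument is given. So there is no ``paper's own proof'' to compare against; you have supplied a self-contained proof where the paper simply cites the literature. What your approach buys is independence from the external reference; what the paper's approach buys is brevity, since the result is entirely classical and not the focus of the article.
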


The first step in proving theorem~\ref{T:main-5-f} is to establish the Fredholmness of an elliptic operator of order $m$ in the Sobolev scale $H^{(s)}(\ZZ^n)$:

\begin{prop}\label{P:prop-s-s-m} Assume that $a\in ES^{m}(\ZZ^n\times \TT^n)$, where $m\in\RR$ and $ES^{m}(\ZZ^n\times \TT^n)$ is as in definition~\ref{D-2}. Then, for all $s\in\RR$,
\begin{equation}\nonumber
T_{a}\colon H^{(s)}(\ZZ^n)\to H^{(s-m)}(\ZZ^n)
\end{equation}
is a (bounded) Fredholm operator. Furthermore, taking into account the notations~(\ref{E:ker-a}), the following properties hold:
\begin{itemize}
  \item [(i)] $\textrm{Ker } T_{a}=\mathscr{K}$;
  \item [(ii)] $\textrm{Ran } T_{a}=\{v\in H^{(s-m)}(\ZZ^n)\colon (v,w)=0,\,\,\textrm{for all }w\in \mathscr{K}^{\dagger}\}$, where $(\cdot,\cdot)$ is the sesquilinear duality between $H^{(s-m)}(\ZZ^n)$ and $H^{(-s+m)}(\ZZ^n)$, as described in~(\ref{E:inner-product-h-phi-reg}).
  \item [(iii)] the index of $T_{a}$ is $\kappa(T_{a})=\dim (\mathscr{K})-\dim (\mathscr{K}^{\dagger})$. (Hence, $\kappa(T_{a})$ is independent of $s$.)
\end{itemize}
 \end{prop}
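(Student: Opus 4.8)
The plan is to derive all assertions from the elliptic parametrix furnished by the symbol calculus of~\cite{BRK-20}, combined with the abstract Fredholm facts in Proposition~\ref{P:agran-90} and the anti-duality between $H^{(s)}(\ZZ^n)$ and $H^{(-s)}(\ZZ^n)$ recorded in Lemma~\ref{L:pairing-sob} (and used in the proof of Theorem~\ref{T:main-4}(iii)). Since $a\in ES^{m}(\ZZ^n\times\TT^n)$, the calculus of~\cite{BRK-20} produces a symbol $b\in S^{-m}(\ZZ^n\times\TT^n)$ with
\begin{equation*}
T_{b}T_{a}=I+R_{1},\qquad T_{a}T_{b}=I+R_{2},
\end{equation*}
where $R_{j}=\textrm{Op}[r_{j}]$ with $r_{j}\in S^{-\infty}(\ZZ^n\times\TT^n):=\cap_{r\in\RR}S^{r}(\ZZ^n\times\TT^n)$; these identities hold on $\mathcal{S}(\ZZ^n)$ and extend by continuity to each $H^{(s)}(\ZZ^n)$. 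By the mapping property (corollary 3.3 in~\cite{DK-20}) each $R_{j}$ maps $H^{(s)}(\ZZ^n)$ boundedly into $H^{(s+1)}(\ZZ^n)$, and, viewing $H^{(t)}(\ZZ^n)$ as $\ell^{2}_{\langle k\rangle^{2t}}(\ZZ^n)$ (Remark~\ref{R:weighted}), the embedding $H^{(s+1)}(\ZZ^n)\hookrightarrow H^{(s)}(\ZZ^n)$ is unitarily equivalent to multiplication by $\langle k\rangle^{-1}$ on $\ell^{2}(\ZZ^n)$, hence compact. Therefore $R_{1},R_{2}$ are compact on $H^{(s)}(\ZZ^n)$ and on $H^{(s-m)}(\ZZ^n)$, so $T_{b}\colon H^{(s-m)}(\ZZ^n)\to H^{(s)}(\ZZ^n)$ is a two-sided parametrix for $T_{a}\colon H^{(s)}(\ZZ^n)\to H^{(s-m)}(\ZZ^n)$ modulo compact operators; the standard criterion (see, e.g.,~\cite{sh-book}) then gives that $T_{a}$ is Fredholm for every $s\in\RR$. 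For $s=m=0$ this recovers theorem 4.2 of~\cite{DK-20}.

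Next I would identify the kernel. If $u\in H^{(s)}(\ZZ^n)$ and $T_{a}u=0$, then $u=-R_{1}u$; since $r_{1}\in S^{-\infty}$, the operator $R_{1}$ maps $H^{(s)}(\ZZ^n)$ into $H^{(r)}(\ZZ^n)$ for every $r\in\RR$, so $u\in\cap_{r\in\RR}H^{(r)}(\ZZ^n)=H^{\infty}(\ZZ^n)$ and hence $u\in\mathscr{K}$; the reverse inclusion is trivial. Thus $\textrm{Ker }T_{a}=\mathscr{K}$ for every $s$, which is (i), and in particular $\dim\textrm{Ker }T_{a}$ does not depend on $s$. The same reasoning applies to the formal adjoint: by the calculus of~\cite{BRK-20}, $T_{a}^{\dagger}=\textrm{Op}[a^{\dagger}]$ with $a^{\dagger}\in S^{m}(\ZZ^n\times\TT^n)$ equal to $\overline{a}$ modulo a symbol of order $m-1$, so the ellipticity bound is preserved for large $|k|$ and $a^{\dagger}\in ES^{m}(\ZZ^n\times\TT^n)$; consequently $\textrm{Ker }\bigl(T_{a}^{\dagger}\colon H^{(r)}(\ZZ^n)\to H^{(r-m)}(\ZZ^n)\bigr)=\mathscr{K}^{\dagger}$ for every $r\in\RR$.

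Finally, (ii) and (iii) follow from duality. By Lemma~\ref{L:pairing-sob} the $\ell^{2}$-pairing~(\ref{E:inner-l-2}) identifies $H^{(-r)}(\ZZ^n)$ with the anti-dual of $H^{(r)}(\ZZ^n)$ for every $r\in\RR$; under these identifications and the defining relation~(\ref{E:dual-1}), the Banach adjoint of $T_{a}\colon H^{(s)}(\ZZ^n)\to H^{(s-m)}(\ZZ^n)$ is $T_{a}^{\dagger}\colon H^{(m-s)}(\ZZ^n)\to H^{(-s)}(\ZZ^n)$, whose kernel is $\mathscr{K}^{\dagger}$ by the previous paragraph. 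Proposition~\ref{P:agran-90}(i) then yields
\begin{equation*}
\textrm{Ran }T_{a}=\{v\in H^{(s-m)}(\ZZ^n)\colon (v,w)=0,\,\,\textrm{for all }w\in\mathscr{K}^{\dagger}\},
\end{equation*}
with $(\cdot,\cdot)$ the duality~(\ref{E:inner-product-h-phi-reg}), which is (ii); and Proposition~\ref{P:agran-90}(iii) gives $\kappa(T_{a})=\dim\textrm{Ker }T_{a}-\dim\textrm{Ker }T_{a}^{\dagger}=\dim\mathscr{K}-\dim\mathscr{K}^{\dagger}$, independent of $s$, which is (iii).

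The step I expect to be the main obstacle is the identification of the Banach adjoint $T_{a}^{*}$ with the formal adjoint $T_{a}^{\dagger}$: this requires keeping track of the conjugate-linearities built into the anti-duality of Lemma~\ref{L:pairing-sob} and into the pairing $(\cdot,\cdot)_{j}$ of Proposition~\ref{P:agran-90}, and it also requires checking carefully that the adjoint symbol genuinely lies in $ES^{m}(\ZZ^n\times\TT^n)$ (the lower-order correction cannot destroy the ellipticity lower bound for large $|k|$). By comparison, the parametrix construction, the compactness of the smoothing remainders, and the kernel argument are routine once the calculus of~\cite{BRK-20} and Remark~\ref{R:weighted} are in hand.
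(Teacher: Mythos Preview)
Your proof is correct and follows essentially the same route as the paper: parametrix from~\cite{BRK-20}, compactness of the smoothing remainders via the compact embedding $H^{(s+1)}(\ZZ^n)\hookrightarrow H^{(s)}(\ZZ^n)$, elliptic regularity for the kernel, and Proposition~\ref{P:agran-90} together with Lemma~\ref{L:pairing-sob} for the range and index. The only cosmetic differences are that the paper cites theorem~3.5 of~\cite{DK-20} for the compact embedding (where you argue it directly via Remark~\ref{R:weighted}), and the paper simply asserts that $(T_a)^{\dagger}$ is elliptic of order $m$ (where you spell out the symbol expansion); your slightly more explicit treatment of the adjoint identification is a genuine clarification rather than a different idea.
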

\begin{proof} The Fredholmness assertion is justified by using the same argument as in theorem 4.2 of~\cite{DK-20} for the case of $m=s=0$. For completeness, we outline the proof based on an abstract fact due to Atkinson (see propsition 8.2 in~\cite{sh-book}), according to which it is enough to show that we can find a bounded linear operator $B\colon  H^{(s-m)}(\ZZ^n)\to H^{(s)}(\ZZ^n)$ and compact operators $K_1\colon H^{(s)}(\ZZ^n)\to H^{(s)}(\ZZ^n)$ and $K_2\colon H^{(s-m)}(\ZZ^n)\to H^{(s-m)}(\ZZ^n)$ such that
\begin{equation}\label{E:atkinson}
BT_{a}=I+K_{1},\qquad T_{a}B=I+K_{2},
\end{equation}
where $I$ indicates (by slight abuse of notation) the identity operators on $H^{(s)}(\ZZ^n)$ and $H^{(s-m)}(\ZZ^n)$.

Since $T_{a}$ is an elliptic operator of order $m$, theorem 3.6 in~\cite{BRK-20} grants us a parametrix $T_{b}$ with $b\in S^{-m}(\ZZ^n\times \TT^n)$ such that
\begin{equation*}
T_{b}T_{a}=I+K_{1},\qquad T_{a}T_{b}=I+K_{2},
\end{equation*}
where $K_1$ and $K_2$ are operators whose symbols belong to $\cap_{r\in\RR}S^{r}(\ZZ^n\times \TT^n)$.

Thus, $T_{b}$ plays the role of the operator $B$ in~(\ref{E:atkinson}). It remains to show that $K_1\colon H^{(s)}(\ZZ^n)\to H^{(s)}(\ZZ^n)$ and $K_2\colon H^{(s-m)}(\ZZ^n)\to H^{(s-m)}(\ZZ^n)$ are compact operators, which will do just for $K_1$ (as $K_2$ can be handled in the same way).

First, note that for all $t>s$, $K_1\colon H^{(s)}(\ZZ^n)\to H^{(t)}(\ZZ^n)$ is a bounded linear operator (because the symbol of $K_1$ belongs to $\cap_{r\in\RR}S^{r}(\ZZ^n\times \TT^n)$). Furthermore, by theorem 3.5 in~\cite{DK-20}, for all $t>s$ the inclusion
\begin{equation*}
\iota\colon H^{(t)}(\ZZ^n)\to H^{(s)}(\ZZ^n)
\end{equation*}
is a compact operator.

Therefore, writing $K_1=\iota K_1$, we infer that $K_1\colon H^{(s)}(\ZZ^n)\to H^{(s)}(\ZZ^n)$ is a compact operator, which concludes the proof of the Fredholmness property for $T_{a}$.

To prove the property (i), it is enough to observe that for $a\in ES^{m}(\ZZ^n\times \TT^n)$ we can use elliptic regularity to show that $T_{a}u=0$ implies $u\in\cap_{r\in\RR}H^{(r)}(\ZZ^n)=H^{\infty}(\ZZ^n)$. The property (ii) follows from part (i) of proposition~\ref{P:agran-90}, lemma~\ref{L:pairing-sob}, and the fact that $(T_{a})^{\dagger}u=0$ implies $u\in\cap_{r\in\RR}H^{(r)}(\ZZ^n)=H^{\infty}(\ZZ^n)$ (because $(T_{a})^{\dagger}$ is also an elliptic operator of order $m$). With this information, the property (iii) follows from part (iii) of proposition~\ref{P:agran-90}.
\end{proof}

Before proving theorem~\ref{T:main-5-f}, we recall the following interpolation property (see theorem 1.7 in~\cite{MM-14}), with the terminology as in sections~\ref{SS-2-1} and~\ref{SS-1-11}.

\begin{prop}\label{P:murach-abs-f} Let $\mathscr{H}:=[\mathscr{H}_{0},\mathscr{H}_{1}]$ and $\mathscr{G}:=[\mathscr{G}_{0},\mathscr{G}_{1}]$
be two admissible pairs of Hilbert spaces, and let $\psi\in\mathcal {B}$ be
an interpolation parameter, where the class $\mathcal {B}$ is as in section~\ref{SS-2-1}. Assume that $T$ is a linear operator acting on $\mathscr{H}_{0}$ and satisfying the following properties:
\begin{itemize}
  \item [(i)] the restrictions of $T$ to the spaces $\mathscr{H}_{j}$, with  $j=0,1$, are bounded Fredholm operators
  $T\colon \mathscr{H}_{j}\to \mathscr{G}_{j}$, $j=0,1$;
  \item [(ii)] the operators $T\colon \mathscr{H}_{j}\to \mathscr{G}_{j}$, $j=0,1$, have a common kernel;
  \item [(iii)] the operators $T\colon \mathscr{H}_{j}\to \mathscr{G}_{j}$, $j=0,1$, have the same index.
\end{itemize}
Then, the restriction of $T$ to the interpolation space $\mathscr{H}_{\psi}$ is a bounded Fredholm operator $T\colon \mathscr{H}_{\psi}\to \mathscr{G}_{\psi}$ whose kernel and the index are the same as those of the operators $T\colon \mathscr{H}_{j}\to \mathscr{G}_{j}$, $j=0,1$.

Furthermore, we have $T(\mathscr{H}_{\psi})=\mathscr{G}_{\psi}\cap T(\mathscr{H}_{0})$.
\end{prop}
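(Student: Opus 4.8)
The plan is to reduce $T$ to an isomorphism by a finite‑dimensional \emph{bordering} that is common to both endpoint levels, interpolate that isomorphism, and then read off the Fredholm data. Boundedness of $T\colon\mathscr{H}_{\psi}\to\mathscr{G}_{\psi}$ is immediate from the hypothesis that $\psi$ is an interpolation parameter (section~\ref{SS-2-1}), since $T$ is bounded $\mathscr{H}_{j}\to\mathscr{G}_{j}$ for $j=0,1$. The decisive preliminary step is a \emph{regularity lemma}: under (i)--(iii) one has $\mathscr{G}_{1}\cap T(\mathscr{H}_{0})=T(\mathscr{H}_{1})$. I would prove this by comparing cokernels. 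Write $c_{j}=\dim(\mathscr{G}_{j}/T(\mathscr{H}_{j}))$; by (ii)--(iii) the index $\dim N-c_{j}$ is the same and $N$ is common, so $c_{0}=c_{1}=:c$. Because $\mathscr{G}_{1}\hookrightarrow\mathscr{G}_{0}$ is dense, the image of $\mathscr{G}_{1}$ under the quotient map $\mathscr{G}_{0}\to\mathscr{G}_{0}/T(\mathscr{H}_{0})$ is a dense, hence full, subspace of this finite‑dimensional quotient; this forces $\mathscr{G}_{1}\cap T(\mathscr{H}_{0})$ to have codimension exactly $c$ in $\mathscr{G}_{1}$. Since $T(\mathscr{H}_{1})\subseteq\mathscr{G}_{1}\cap T(\mathscr{H}_{0})$ already has codimension $c$, the two subspaces coincide.

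With the lemma in hand I would choose a finite‑dimensional $\mathscr{G}'\subset\mathscr{G}_{1}$ with $\dim\mathscr{G}'=c$ complementing $T(\mathscr{H}_{1})=\mathscr{G}_{1}\cap T(\mathscr{H}_{0})$ in $\mathscr{G}_{1}$; since $\mathscr{G}'\subseteq\mathscr{G}_{1}$ and $\mathscr{G}'\cap T(\mathscr{H}_{0})=\mathscr{G}'\cap(\mathscr{G}_{1}\cap T(\mathscr{H}_{0}))=\{0\}$, the \emph{same} $\mathscr{G}'$ complements $T(\mathscr{H}_{0})$ in $\mathscr{G}_{0}$. Let $n_{0}=\dim N$, fix a linear isomorphism $\Phi\colon\CC^{c}\to\mathscr{G}'$, and set $\Psi\colon u\mapsto((u,e_{1})_{\mathscr{H}_{0}},\dots,(u,e_{n_{0}})_{\mathscr{H}_{0}})$, where $\{e_{i}\}$ is an orthonormal basis of $N$ in $\mathscr{H}_{0}$, so that $\Psi|_{N}$ is an isomorphism and $\Psi$ is bounded on $\mathscr{H}_{0}$ (hence on $\mathscr{H}_{1}$). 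Then, by the standard Fredholm bordering lemma,
\[
S:=\begin{pmatrix} T & \Phi \\ \Psi & 0\end{pmatrix}
\]
is an isomorphism $\mathscr{H}_{j}\oplus\CC^{c}\to\mathscr{G}_{j}\oplus\CC^{n_{0}}$ for each $j=0,1$, and it is literally the same operator on both levels because $\Phi$ and $\Psi$ act on both.

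Interpolation commutes with the finite‑dimensional summands, so $[\mathscr{H}_{0}\oplus\CC^{c},\mathscr{H}_{1}\oplus\CC^{c}]_{\psi}=\mathscr{H}_{\psi}\oplus\CC^{c}$ and similarly on the target. As $\psi$ is an interpolation parameter and both $S$ and $S^{-1}$ act boundedly on the respective endpoint pairs, both extend to bounded operators on the interpolated spaces; hence $S\colon\mathscr{H}_{\psi}\oplus\CC^{c}\to\mathscr{G}_{\psi}\oplus\CC^{n_{0}}$ is an isomorphism. Un‑bordering (subtracting the finite‑rank $\Phi,\Psi$ blocks) then shows $T\colon\mathscr{H}_{\psi}\to\mathscr{G}_{\psi}$ is Fredholm with index $\kappa(T)=n_{0}-c$, the common index. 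Its kernel is again $N$: any $u\in\mathscr{H}_{\psi}$ with $Tu=0$ lies in $\mathscr{H}_{0}$, whence $u\in\ker(T|_{\mathscr{H}_{0}})=N\subseteq\mathscr{H}_{1}\hookrightarrow\mathscr{H}_{\psi}$. Finally, the range identity $T(\mathscr{H}_{\psi})=\mathscr{G}_{\psi}\cap T(\mathscr{H}_{0})$ follows by applying the regularity lemma of the first paragraph to the admissible pairs $[\mathscr{H}_{0},\mathscr{H}_{\psi}]$ and $[\mathscr{G}_{0},\mathscr{G}_{\psi}]$, for which $T$ is Fredholm with common kernel $N$ and equal index $n_{0}-c$, as just established.

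The heart of the argument, and the step I expect to be the main obstacle, is the regularity lemma $\mathscr{G}_{1}\cap T(\mathscr{H}_{0})=T(\mathscr{H}_{1})$, which is precisely what makes a single complement $\mathscr{G}'$ — and therefore a single bordering $S$ compatible with both levels — possible. The delicate point is that it genuinely requires all three hypotheses together with the density built into admissible pairs: density forces the cokernel‑comparison map to be surjective, while the equal‑index and common‑kernel hypotheses equalize the cokernel dimensions so that surjectivity upgrades to bijectivity. Without a common complement one cannot interpolate a single isomorphism, so the entire argument hinges on this step.
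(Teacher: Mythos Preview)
The paper does not prove this proposition; it simply quotes it from theorem~1.7 of \cite{MM-14}. So there is no ``paper's own proof'' to compare against---you have supplied an independent argument for a result the paper imports as a black box.

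Your argument is correct and follows the classical bordering strategy. The regularity lemma $\mathscr{G}_{1}\cap T(\mathscr{H}_{0})=T(\mathscr{H}_{1})$ is indeed the crux, and your proof of it is clean: density of $\mathscr{G}_{1}$ in $\mathscr{G}_{0}$ makes the composite $\mathscr{G}_{1}\to\mathscr{G}_{0}/T(\mathscr{H}_{0})$ surjective, and the equal-cokernel-dimension coming from (ii)--(iii) then forces the inclusion $T(\mathscr{H}_{1})\subseteq\mathscr{G}_{1}\cap T(\mathscr{H}_{0})$ to be an equality. The bordered operator $S$ is an isomorphism at both levels for the reasons you give, and since $S^{-1}$ on $\mathscr{G}_{1}\oplus\CC^{n_{0}}$ is the restriction of $S^{-1}$ on $\mathscr{G}_{0}\oplus\CC^{n_{0}}$, both $S$ and $S^{-1}$ interpolate. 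Two small points you pass over quickly but which are routine: the identification $[\mathscr{H}_{0}\oplus\CC^{c},\mathscr{H}_{1}\oplus\CC^{c}]_{\psi}=\mathscr{H}_{\psi}\oplus\CC^{c}$ holds (up to equivalent norms) because the generating operator of the direct-sum pair is $J\oplus I$, whence $\psi(J\oplus I)=\psi(J)\oplus\psi(1)I$; and the final invocation of the regularity lemma requires the dense continuous embedding $\mathscr{G}_{\psi}\hookrightarrow\mathscr{G}_{0}$, which is part of the interpolation-parameter package recalled at the end of section~\ref{SS-2-1}. With those remarks the proof is complete.
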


\noindent\textbf{Continuation of the Proof of Theorem~\ref{T:main-5-f}}
\\\\
Let $\varphi\in RO$ and let $s_0<\sigma_0(\varphi)$ and $s_1>\sigma_1(\varphi)$. Since $a\in ES^{m}(\ZZ^n\times \TT^n)$, $m\in\RR$, we can use proposition~\ref{P:prop-s-s-m} to infer that
\begin{equation}\label{E:bdd-A-kordyukov}
T_{a}\colon H^{(s_0)}(\ZZ^n)\to H^{(s_0-m)}(\ZZ^n),\qquad T_{a}\colon H^{(s_1)}(\ZZ^n)\to H^{(s_1-m)}(\ZZ^n)
\end{equation}
are bounded Fredholm operators, with a common kernel and index (as described in items (i) and (iii) of proposition~\ref{P:prop-s-s-m}).

Defining $\psi$ as in~(\ref{E:phi-psi-inverse}) and referring to proposition~\ref{L-4} we see that $\psi\in\mathcal{B}$ is an interpolation parameter satisfying~(\ref{E:phi-psi}). Thus, according to~(\ref{E:interp-A-1}) and~(\ref{E:interp-A-2}) we have
\begin{equation*}
[H^{(s_0)}(\ZZ^n), H^{(s_1)}(\ZZ^n)]_{\psi}=H^{\varphi}(\ZZ^n)
\end{equation*}
and
\begin{equation*}
[H^{(s_0-m)}(\ZZ^n), H^{(s_1-m)}(\ZZ^n)]_{\psi}=H^{t^{-m}\varphi}(\ZZ^n).
\end{equation*}
Thus, we can apply proposition~\ref{P:murach-abs-f} to infer that
\begin{equation*}
T_{a}\colon H^{\varphi}(\ZZ^n)\to H^{t^{-m}\varphi}(\ZZ^n)
\end{equation*}
is a bounded Fredholm operator whose kernel and index are described in items (i) and (iii) of proposition~\ref{P:prop-s-s-m}. This proves the Fredholmness property and the assertions (i) and (iii) of the theorem.

We now justify assertion (ii) of the theorem. By the last sentence in proposition~\ref{P:murach-abs-f}, we have
\begin{equation*}
T_{a}(H^{\varphi}(\ZZ^n))= H^{t^{-m}\varphi}(\ZZ^n)\cap T(H^{(s_0)}(\ZZ^n)).
\end{equation*}
Combining this equality with item (ii) of proposition~\ref{P:prop-s-s-m} (with $s_0$ in place of $s$) and using the fact that $H^{t^{-m}\varphi}(\ZZ^n)\subset H^{(s_0-m)}(\ZZ^n)$, we obtain
\begin{equation*}
\textrm{Ran } T_{a}=\{v\in H^{t^{-m}\varphi}(\ZZ^n)\colon (v,w)=0,\,\,\textrm{for all }w\in \mathscr{K^{\dagger}}\},
\end{equation*}
where $\mathscr{K^{\dagger}}$ is as in~(\ref{E:ker-a}). This concludes the proof of the theorem $\hfill\square$

\section{Proof of Theorem~\ref{T:main-7}}\label{S:S-9} The following abstract result was established in theorem 2.5 of~\cite{MM-21}:

\begin{prop}\label{P:6-1}Let $\varphi\in RO$, $s_0<\sigma_0(\varphi)$, and $s_1>\sigma_1(\varphi)$. Let $\psi$ be as in~(\ref{E:phi-psi-inverse}). Let $H^{(s_j)}_{A}$, $j=0,1$, and $H^{\varphi}_{A}$ be as in section~\ref{SS-1-21}.

Then, $\psi\in\mathcal{B}$ and $\psi$ is an interpolation parameter. Furthermore, we have (with equality of norms)
\begin{equation*}
[H^{(s_0)}_{A}, H^{(s_1)}_{A}]_{\psi}=H^{\varphi}_{A}.
\end{equation*}
\end{prop}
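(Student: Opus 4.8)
The plan is to reproduce, at the abstract level, the argument of Proposition~\ref{L-3}, with the weighted $\ell^2$-model of the discrete Sobolev scale replaced by the spectral ($L^2$-multiplication) model of the $A$-scale. For the first two assertions: since $s_0<\sigma_0(\varphi)$ and $s_1>\sigma_1(\varphi)$, the definition of the Matuszewska indices shows that $\varphi$ satisfies~(\ref{E:RO-3}) with this particular pair $s_0,s_1$ and some constant $c\geq1$; hence Proposition~\ref{L-4} applies verbatim and yields that $\psi\in\mathcal{B}$, that $\psi$ is an interpolation parameter, and that $\psi$ satisfies~(\ref{E:phi-psi}), i.e.\ $\varphi(t)=t^{s_0}\psi(t^{s_1-s_0})$ for all $t\geq1$. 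Only the interpolation identity then remains.

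Next I would set up the spectral model. Since $A$ is self-adjoint in $\mathscr{H}$ with $A\geq I$, the spectral theorem provides a $\sigma$-finite measure space $(X,\mu)$, a measurable function $q\colon X\to[1,\infty)$, and a unitary $U\colon\mathscr{H}\to L^2(X,\mu)$ with $UAU^{-1}$ equal to multiplication by $q$. For each $s\in\RR$, $U$ extends to a unitary from $H^{(s)}_A$ onto the weighted space $L^2(X,q^{2s}\mu)$ (for $s\geq0$ this is the identification $H^{(s)}_A=\dom(A^s)$; for $s<0$ one uses that $L^2(X,\mu)$ is dense in $L^2(X,q^{2s}\mu)$), and similarly $U$ extends to a unitary $H^{\varphi}_A\to L^2(X,(\varphi\circ q)^2\mu)$ (using here that a function in $RO$ is bounded and bounded away from zero on each interval $[1,b]$). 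In this model the admissible pair $[H^{(s_0)}_A,H^{(s_1)}_A]$ becomes $[L^2(X,q^{2s_0}\mu),L^2(X,q^{2s_1}\mu)]$, whose generating operator is multiplication by $q^{s_1-s_0}$ in $L^2(X,q^{2s_0}\mu)$: this operator is positive self-adjoint there (because $q\geq1$), has domain exactly $L^2(X,q^{2s_1}\mu)$, and satisfies $\|q^{s_1-s_0}f\|_{L^2(q^{2s_0}\mu)}=\|f\|_{L^2(q^{2s_1}\mu)}$, i.e.\ precisely the two defining properties. By uniqueness of the generating operator this is the operator $J$, and functional calculus gives that $\psi(J)$ is multiplication by $\psi\circ q^{s_1-s_0}$ (legitimate since $q^{s_1-s_0}\geq1$, where $\psi$ is given by the relevant branch).

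Finally I would compute the norm. Writing $f:=Uu$, for $u$ in the dense set of vectors of bounded spectral support with respect to $A$ one has
\begin{equation*}
\|u\|^2_{[H^{(s_0)}_A,H^{(s_1)}_A]_\psi}
=\|\psi(J)u\|^2_{H^{(s_0)}_A}
=\int_X\bigl(q(x)^{s_0}\,\psi(q(x)^{s_1-s_0})\bigr)^2\,|f(x)|^2\,d\mu(x)
=\int_X\varphi(q(x))^2\,|f(x)|^2\,d\mu(x)
=\|u\|^2_{H^{\varphi}_A},
\end{equation*}
where the third equality is~(\ref{E:phi-psi}) applied pointwise, valid since $q(x)\geq1$. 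Because this equality of norms holds on a set dense in all the spaces involved, and both $[H^{(s_0)}_A,H^{(s_1)}_A]_\psi$ and $H^{\varphi}_A$ are the completions of that set in the respective norms, the two Hilbert spaces coincide with equality of norms, which is the assertion. The step I expect to require the most care is the identification of the generating operator of the abstract pair together with the compatibility of the two functional calculi involved — the abstract one attached to the pair (producing $\psi(J)$ in $H^{(s_0)}_A$) and the spectral one of $A$ in $\mathscr{H}$; passing to the $L^2$-multiplication model is precisely what trivialises this, reducing all the spaces to weighted $L^2$-spaces just as weighted $\ell^2$-spaces were used in the proof of Proposition~\ref{L-3}, although one must still handle the completions with care since for $s<0$ the space $H^{(s)}_A$ is not a subspace of $\mathscr{H}$.
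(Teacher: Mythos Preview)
Your argument is correct. Note, however, that the paper does not actually supply its own proof of this proposition: it is simply quoted as theorem~2.5 of~\cite{MM-21}. What you have written is precisely the standard proof of that abstract result --- pass to the spectral $L^2$-multiplication model of $A$, identify the generating operator of the pair $[H^{(s_0)}_A,H^{(s_1)}_A]$ with multiplication by $q^{s_1-s_0}$, and then use the relation $\varphi(t)=t^{s_0}\psi(t^{s_1-s_0})$ pointwise --- mirroring the weighted-$\ell^2$ computation in the proof of proposition~\ref{L-3}. Your caveat about the completions for $s<0$ is the only genuinely delicate point, and you have handled it correctly by working on the dense subspace of vectors with bounded $A$-spectral support.
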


Let us go back to the setting of theorem~\ref{T:main-7}, as described in section~\ref{SS-1-22}.

\begin{lemma}\label{L:A-k-isomorphic} Assume that $A:=\textrm{Op}[a]$ satisfies the hypotheses (H1)--(H2). Then, up to norm equivalence, we have
\begin{equation*}
H_{A}^{(k)}(\ZZ^n)=H^{(k)}(\ZZ^n),\quad \textrm{for all }k\in\mathbb{Z}.
\end{equation*}
\end{lemma}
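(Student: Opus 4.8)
\textbf{Strategy.} The plan is to prove the norm equivalence separately for $k \geq 0$ and $k < 0$, reducing both to a single statement about the self-adjoint operator $A$ and the multiplication operator $\langle k\rangle := (1+|k|^2)^{1/2}$ (here viewed as an operator on $\ell^2(\ZZ^n)$). The key observation is that, by Remark~\ref{R:kor}, the self-adjoint closure of $A$ has domain $H^{(1)}(\ZZ^n)$, and by hypothesis (H2) together with essential self-adjointness, $A \geq I$, so $A^{-1}$ exists as a bounded operator on $\ell^2(\ZZ^n)$ and the powers $A^s$ are well-defined positive self-adjoint operators via spectral calculus. Since $a \in ES^{1}(\ZZ^n\times\TT^n)$, proposition~\ref{P:prop-s-s-m} (or the mapping property of PDOs) gives that $A = T_a \colon H^{(s)}(\ZZ^n)\to H^{(s-1)}(\ZZ^n)$ is bounded for every $s$; iterating, $A^j \colon H^{(s)}(\ZZ^n)\to H^{(s-j)}(\ZZ^n)$ is bounded for every $j\in\NN_0$. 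Moreover $A^{-1}$ is an elliptic PDO of order $-1$ (a parametrix, modulo smoothing, exists by theorem 3.6 in~\cite{BRK-20}, and ellipticity of $a$ forces the smoothing correction to be harmless), so $A^{-1}\colon H^{(s-1)}(\ZZ^n)\to H^{(s)}(\ZZ^n)$ is bounded as well.

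\textbf{Case $k \geq 0$.} Here $H_A^{(k)}(\ZZ^n) = \dom(A^k)$ with norm $\|u\|_{A,k} = \|A^k u\|$, where $\|\cdot\|$ is the $\ell^2$-norm. From the boundedness of $A^k \colon H^{(k)}(\ZZ^n)\to \ell^2(\ZZ^n)$ we get $\|u\|_{A,k} = \|A^k u\| \leq C\|u\|_{H^{(k)}}$, which also shows $H^{(k)}(\ZZ^n)\subseteq \dom(A^k)$. Conversely, from the boundedness of $A^{-k}\colon \ell^2(\ZZ^n)\to H^{(k)}(\ZZ^n)$ we get $\|u\|_{H^{(k)}} = \|A^{-k}(A^k u)\|_{H^{(k)}} \leq C'\|A^k u\| = C'\|u\|_{A,k}$, and this shows $\dom(A^k)\subseteq H^{(k)}(\ZZ^n)$. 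Hence the two norms are equivalent on the common space; since $\mathcal{S}(\ZZ^n)$ is dense in both (remark~\ref{R:dense-sobolev}, and it lies in $\dom(A^k)$ because $T_a$ preserves $\mathcal{S}(\ZZ^n)$), the completions coincide.

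\textbf{Case $k < 0$.} Write $k = -j$ with $j\in\NN$. Now $H_A^{(-j)}(\ZZ^n)$ is the completion of $\dom(A^{-j}) = \ell^2(\ZZ^n)$ in the norm $\|u\|_{A,-j} = \|A^{-j}u\|$. The boundedness of $A^{-j}\colon \ell^2(\ZZ^n)\to H^{(j)}(\ZZ^n)$ and its bounded inverse $A^{j}$ (mapping $H^{(j)}(\ZZ^n)\to \ell^2(\ZZ^n)$, equivalently $H^{(-j)}(\ZZ^n)\cong (\overline{H^{(j)}(\ZZ^n)})'$ by the appendix's anti-duality) show that $A^{-j}$ extends to an isomorphism of $H^{(-j)}(\ZZ^n)$ onto $H^{(j)}(\ZZ^n)$; therefore for $u\in\ell^2(\ZZ^n)$ one has $\|A^{-j}u\| \asymp \|u\|_{H^{(-j)}}$, using that $A^{-j}$ also maps $H^{(j)}(\ZZ^n)$ isomorphically onto $H^{(3j)}(\ZZ^n)$, etc.; more directly, compose: $\|A^{-j}u\|_{\ell^2} = \|A^{-j}u\|_{H^{(0)}}$ and apply the boundedness of $A^{j}\colon H^{(0)}\to H^{(-j)}$ and $A^{-j}\colon H^{(-j)}\to H^{(0)}$ in both directions. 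Taking completions gives $H_A^{(-j)}(\ZZ^n) = H^{(-j)}(\ZZ^n)$ up to norm equivalence.

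\textbf{Main obstacle.} The delicate point is the boundedness of $A^{-1}$ (and its powers) as a map between the \emph{standard} Sobolev spaces $H^{(s)}(\ZZ^n)$, i.e.\ identifying $A^{-1}$ with a PDO of order $-1$ in the calculus of~\cite{BRK-20, DK-20}. One way around this, avoiding a symbolic identification of $A^{-1}$: since $A$ is elliptic of order $1$, $A\colon H^{(1)}(\ZZ^n)\to \ell^2(\ZZ^n)$ is Fredholm of index $0$ by proposition~\ref{P:prop-s-s-m}, and $A \geq I$ forces $\operatorname{Ker}A = 0$, hence $A$ is a topological isomorphism $H^{(1)}(\ZZ^n)\to \ell^2(\ZZ^n)$ with bounded inverse. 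Iterating the same argument with $A\colon H^{(s)}\to H^{(s-1)}$ (again Fredholm of index $0$, injective) gives that each such $A$ is an isomorphism, and composing/inverting yields all the bounded maps needed above. This Fredholm-plus-injectivity route is cleaner than constructing the symbol of $A^{-1}$ explicitly, and it is the step I would write out in full detail.
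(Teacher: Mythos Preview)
Your proposal is correct and follows essentially the same route as the paper: reduce to showing that $A\colon H^{(s)}(\ZZ^n)\to H^{(s-1)}(\ZZ^n)$ is a topological isomorphism for every $s$, then iterate. The paper phrases this step via elliptic regularity (if $u\in H^{(1)}$ and $Au\in H^{(s-1)}$ then $u\in H^{(s)}$), while you phrase it via proposition~\ref{P:prop-s-s-m} (Fredholm of index~$0$ with $s$-independent kernel $\mathscr{K}$, and $A\ge I$ forces $\mathscr{K}=0$); these are equivalent, and your formulation arguably uses the paper's own machinery more directly.

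Two small points. First, your exposition for $k<0$ is a bit tangled: the sentence ``$A^{-j}$ extends to an isomorphism of $H^{(-j)}(\ZZ^n)$ onto $H^{(j)}(\ZZ^n)$'' has the wrong target (it should be $H^{(0)}(\ZZ^n)$), though your ``more directly'' clause---using that $A^{j}\colon H^{(0)}\to H^{(-j)}$ and $A^{-j}\colon H^{(-j)}\to H^{(0)}$ are mutually inverse isomorphisms---is the correct statement and suffices. The paper dispatches the case $k<0$ in one line by duality (via lemma~\ref{L:pairing-sob}), which is cleaner than a direct computation. Second, you were right to abandon the attempt to identify $A^{-1}$ symbolically as a PDO of order $-1$; the calculus only gives a parametrix modulo smoothing, and the Fredholm-plus-injectivity argument avoids this issue entirely.
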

\begin{proof}
It is enough to show the result for $k\in\NN_{0}$, as the case  $k=-1,-2,\dots$ follows by duality from the case $k\in\NN$.
The case $k=0$ is obviously true. Therefore, we need to consider the case $k\in\NN$ only.

Remembering the hypotheses (H1)--(H2) and remark~\ref{R:kor}, we see that $A$ is a positive self-adjoint operator in $\ell^2(\ZZ^n)$ such that $\dom(A)=H^{(1)}(\ZZ^n)$. Therefore $A$ establishes an isomorphism $A\colon H^{(1)}(\ZZ^n)\to \ell^2(\ZZ^n)$. Taking into account the definition of $H^{(1)}_{A}(\ZZ^n)$, we infer that $H^{(1)}_{A}(\ZZ^n)=H^{(1)}(\ZZ^n)$, up to norm equivalence.

As in the case of pseudo-differential operators on $\RR^n$ (with symbols in the uniform H\"ormander class), we have the following ``elliptic regularity" property for $A=\textrm{Op}[a]$ with $a\in ES^{1}(\ZZ^n\times \TT^n)$: If $s>1$ and if $u\in H^{(1)}(\ZZ^n)$ satisfies $Au\in H^{(s-1)}(\ZZ^n)$, then $u\in H^{(s)}(\ZZ^n)$. (Theorem 3.6 in~\cite{BRK-20} guarantees the existence of (a unique) parametrix $B=\textrm{Op}[b]$ with $b\in S^{-1}(\ZZ^n\times \TT^n)$. Then, the mentioned ``elliptic regularity" property follows by using the usual parametrix-type argument.)

Using the ``elliptic regularity" property and the mentioned isomorphism
\begin{equation*}
A\colon H^{(1)}(\ZZ^n)\to \ell^2(\ZZ^n),
\end{equation*}
we infer that $A^k$ gives rise to an isomorphism
\begin{equation}\label{E:iso-temp-5}
A^k\colon H^{(k)}(\ZZ^n)\to \ell^2(\ZZ^n).
\end{equation}
Using the isomorphism~(\ref{E:iso-temp-5}) and the definition of $H^{(k)}_{A}(\ZZ^n)$, we arrive at $H^{(k)}_{A}(\ZZ^n)=H^{(k)}(\ZZ^n)$, up to norm equivalence.
\end{proof}

\noindent\textbf{Continuation of the Proof of Theorem~\ref{T:main-7}}
\\\\
Let $\varphi\in RO$, let $k\in\NN$ be a number such that $-k<\sigma_0(\varphi)$ and $k>\sigma_1(\varphi)$. Define $\psi$ as in~(\ref{E:phi-psi-inverse}) with $s_0=-k$ and $s_1=k$. Proposition~\ref{P:6-1} tells us that (up to norm equivalence)
\begin{equation}\label{E:m-7-1}
[H^{(-k)}_{A}, H^{(k)}_{A}(\ZZ^n)]_{\psi}=H^{\varphi}_{A}(\ZZ^n).
\end{equation}

Taking into account~(\ref{E:m-7-1}) and lemma~\ref{L:A-k-isomorphic} we obtain, up to norm equivalence,
\begin{equation*}
[H^{(-k)}(\ZZ^n), H^{(k)}(\ZZ^n)]_{\psi}=H^{\varphi}_{A}(\ZZ^n).
\end{equation*}
On the other hand, theorem~\ref{T:main-2} tells us that
\begin{equation*}
H^{\varphi}(\ZZ^n)=[H^{(-k)}(\ZZ^n), H^{(k)}(\ZZ^n)]_{\psi}.
\end{equation*}
Therefore, we have (up to norm equivalence), $H^{\varphi}_{A}(\ZZ^n)=H^{\varphi}(\ZZ^n)$. $\hfill\square$

\appendix
\section{Anti-duality between discrete Sobolev spaces}\label{S:App}
In this section we adapt the proof of lemma 4.4.4 in~\cite{BC-09} to discrete Sobolev spaces.

We begin with a brief review of terminology. Denote by $\overline{V}$ the \emph{conjugate} of a (complex) vector space $V$. Viewed as real vector spaces, $\overline{V}$ and $V$ are the same. The only difference is that the multiplication of a vector $v\in \overline{V}$ by a scalar $z\in\CC$ is defined as $\overline{z}v$, where $\overline{z}$ is the conjugate of $z$. The space of linear (respectively, anti-linear) functionals on $V$ will be denoted by $V^{'}$ (respectively, $(\overline{V})^{'}$).

Let $V_1$ and $V_2$ be two topological (complex) vector spaces. By a  \emph{pairing}  of $V_1$ and $V_2$ we mean a continuous sesquilinear  map $B\colon V_1\times V_2\to \CC$. (Here, ``sesquilinear" means linear in the first and anti-linear in the second slot.)  The pairing $B$ gives rise to continuous linear maps $\tau\colon V_1\to (\overline{V_2})^{'}$ and $\kappa\colon \overline{V_2}\to V_{1}^{'}$ as follows:
\begin{equation*}
\tau (u):=B(u,\cdot)\,\qquad \kappa(v):=B(\cdot, v).
\end{equation*}
The pairing $B$ is said to be \emph{perfect} if $\tau$ and $\kappa$ are (linear) isomorphisms.

\begin{lemma}\label{L:pairing-sob} Let $s\in\RR$. Then the $\ell^2$-inner product~(\ref{E:inner-l-2}), initially considered on $\mathcal{S}(\ZZ^n)\times \mathcal{S}(\ZZ^n)$, extends to a (continuous sesquilinear) pairing
\begin{equation}\label{E:inner-product-h-phi-reg}
(\cdot,\cdot)\colon H^{(s)}(\ZZ^n)\times H^{(-s)}(\ZZ^n)\to \mathbb{C}.
\end{equation}
The pairing $(\cdot,\cdot)$ is perfect and induces isometric (linear) isomorphisms $\overline{H^{(-s)}(\ZZ^n)}\cong(H^{(s)}(\ZZ^n))^{'}$ and $H^{(s)}(\ZZ^n)\cong(\overline{H^{(-s)}(\ZZ^n)})^{'}$.
\end{lemma}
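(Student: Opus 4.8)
The plan is to identify each space concretely as a weighted $\ell^2$-space (Remark~\ref{R:weighted}) and reduce everything to the self-duality of $\ell^2(\ZZ^n)$ together with the fact that multiplication by $\langle k\rangle^{s}$ is an isometric isomorphism between the relevant weighted spaces. First I would recall that $u\mapsto \langle k\rangle^{s}u$ is an isometric isomorphism $H^{(s)}(\ZZ^n)\to\ell^2(\ZZ^n)$ and, likewise, $v\mapsto\langle k\rangle^{-s}v$ is an isometric isomorphism $H^{(-s)}(\ZZ^n)\to\ell^2(\ZZ^n)$. Then for $u\in\mathcal S(\ZZ^n)$, $v\in\mathcal S(\ZZ^n)$ the identity
\begin{equation*}
(u,v)=\sum_{k\in\ZZ^n}u(k)\overline{v(k)}=\sum_{k\in\ZZ^n}\big(\langle k\rangle^{s}u(k)\big)\overline{\big(\langle k\rangle^{-s}v(k)\big)}
\end{equation*}
together with Cauchy--Schwarz gives $|(u,v)|\le \|u\|_{H^{(s)}}\|v\|_{H^{(-s)}}$. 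Since $\mathcal S(\ZZ^n)$ is dense in both $H^{(s)}(\ZZ^n)$ and $H^{(-s)}(\ZZ^n)$ (Remark~\ref{R:dense-sobolev}), this bounded bilinear form extends uniquely by continuity to a continuous sesquilinear pairing on $H^{(s)}(\ZZ^n)\times H^{(-s)}(\ZZ^n)$, and the same Cauchy--Schwarz estimate persists on the extension, so the induced maps $\tau$ and $\kappa$ have norm $\le 1$.

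Next I would prove perfectness and the isometry statements by exhibiting explicit inverses. Given a continuous antilinear functional $f\in(\overline{H^{(-s)}(\ZZ^n)})'$, precompose with the isomorphism $\ell^2(\ZZ^n)\to H^{(-s)}(\ZZ^n)$, $w\mapsto\langle k\rangle^{-s}w$, to get an antilinear functional on $\ell^2(\ZZ^n)$; by the Riesz representation theorem there is a unique $g\in\ell^2(\ZZ^n)$ with $f(\langle k\rangle^{-s}w)=\sum_k g(k)\overline{w(k)}$ and $\|g\|=\|f\|$. Setting $u:=\langle k\rangle^{s}g\in H^{(s)}(\ZZ^n)$, one checks on $v\in\mathcal S(\ZZ^n)$ (writing $w=\langle k\rangle^{s}v$) that $(u,v)=\sum_k g(k)\overline{w(k)}=f(v)$, hence $\tau(u)=f$ by density, and $\|u\|_{H^{(s)}}=\|g\|=\|f\|$. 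This shows $\tau$ is a surjective isometry, hence an isometric isomorphism $H^{(s)}(\ZZ^n)\cong(\overline{H^{(-s)}(\ZZ^n)})'$; the statement for $\kappa$, i.e. $\overline{H^{(-s)}(\ZZ^n)}\cong(H^{(s)}(\ZZ^n))'$, follows by the symmetric argument (or by applying the first isomorphism with $-s$ in place of $s$ and passing to conjugates, noting $(\overline{W})'\cong\overline{W'}$).

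There is no serious obstacle here; the only points requiring care are bookkeeping ones. One is the conjugate-linearity conventions: the pairing is linear in the first slot and antilinear in the second, so one must be consistent about whether the ambient dual is the space of linear or antilinear functionals, and the weight $\langle k\rangle^{s}$ (real and positive) interacts harmlessly with conjugation. The other is verifying that the extension by continuity is compatible on both factors simultaneously — this is standard because $\mathcal S(\ZZ^n)$ is dense in each factor and the bilinear estimate is jointly continuous. I would also remark that uniqueness of the extension is automatic from density, so the pairing~(\ref{E:inner-product-h-phi-reg}) is well-defined and agrees with the $\ell^2$-inner product on $\mathcal S(\ZZ^n)\times\mathcal S(\ZZ^n)$, which is what is needed for its use in the proof of Theorem~\ref{T:main-4}(iii) and Proposition~\ref{P:prop-s-s-m}.
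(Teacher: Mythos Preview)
Your argument is essentially the paper's: both start from the identity $(u,v)=(\langle k\rangle^{s}u,\langle k\rangle^{-s}v)$ on $\mathcal S(\ZZ^n)$, apply Cauchy--Schwarz and density to extend, and then exploit that multiplication by $\langle k\rangle^{\pm s}$ is an isometric isomorphism onto $\ell^2(\ZZ^n)$. The only cosmetic difference is in the second step: the paper records the isometry of $\tau$ via the sup-formula $\|u\|_{H^{(s)}}=\sup\{|(u,v)|:v\in\mathcal S(\ZZ^n),\ \|v\|_{H^{(-s)}}=1\}$ and then deduces surjectivity from injectivity of $\kappa$ (dense range plus isometry), whereas you invoke Riesz on $\ell^2(\ZZ^n)$ directly to construct a preimage. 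These are equivalent routes.

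There is, however, a sign slip in your Riesz paragraph. The isometric isomorphism $\ell^2(\ZZ^n)\to H^{(-s)}(\ZZ^n)$ is $w\mapsto\langle k\rangle^{s}w$ (the inverse of the map $v\mapsto\langle k\rangle^{-s}v$ you correctly wrote at the outset), not $w\mapsto\langle k\rangle^{-s}w$; with your formula the map is not even well-defined into $H^{(-s)}(\ZZ^n)$ for $s<0$. Tracing through with the correct sign gives $w=\langle k\rangle^{-s}v$ and $u=\langle k\rangle^{-s}g$, and then indeed $\|u\|_{H^{(s)}}=\|\langle k\rangle^{s}u\|=\|g\|=\|f\|$. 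With your signs the pairing identity $(u,v)=\sum_k g(k)\overline{w(k)}$ still holds, but the asserted norm equality $\|u\|_{H^{(s)}}=\|g\|$ fails (you get $\|\langle k\rangle^{2s}g\|$ instead). This is exactly the bookkeeping hazard you flagged; once corrected, the proof is complete.
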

\begin{proof}
Let $(\cdot,\cdot)$ be as in~(\ref{E:inner-l-2}) and let $\langle k\rangle$ be as in~(\ref{sob-mult}). For all $u,v\in \mathcal{S}(\ZZ^n)$ we have
\begin{equation}\label{E:a-1}
|(u,v)|=|(\langle k\rangle^s u,\langle k\rangle^{-s} v)|\leq \|\langle k\rangle^s u\|\|\langle k\rangle^{-s}v\|=\|u\|_{H^{(s)}}\|v\|_{H^{(-s)}},
\end{equation}
where we used Cauchy--Schwarz inequality in $\ell^2(\ZZ^n)$.

As $\mathcal{S}(\ZZ^n)$ is dense in the spaces $H^{\pm s}(\ZZ^n)$ (see lemma 3.16 in~\cite{DK-20}), from~(\ref{E:a-1}) we infer that the $\ell^2$-inner product extends to a (continuous sesquilinear) pairing $(\cdot,\cdot)\colon H^{(s)}(\ZZ^n)\times H^{(-s)}(\ZZ^n)\to \mathbb{C}$.

Additionally, from~(\ref{E:a-1}) we obtain
\begin{equation}\label{E:a-2}
\|u\|_{H^{(s)}}=\sup\{|(u,v)|\colon v\in \mathcal{S}(\ZZ^n) \textrm{ and } \|v\|_{H^{(-s)}}=1\}.
\end{equation}
Using the density of $\mathcal{S}(\ZZ^n)$ in the space $H^{(-s)}(\ZZ^n)$, observe that the right hand side of~(\ref{E:a-2}) is equal to the norm of the functional $\tau(u)\in(\overline{H^{(-s)}(\ZZ^n)})^{'}$ corresponding to $u\in H^{(s)}(\ZZ^n)$ via $\tau(u):=(u,\cdot)$.  Therefore, $\tau\colon H^{(s)}(\ZZ^n)\to  (\overline{H^{(-s)}(\ZZ^n)})^{'}$ is an isometry.

In the same way, using~(\ref{E:a-1}) we can show that the map $\kappa\colon \overline{H^{(-s)}(\ZZ^n)}\to (H^{(s)}(\ZZ^n))^{'}$ defined as
$\kappa(v):=(\cdot,v)$ is an isometry.

In particular, the maps $\tau$ and $\kappa$ are injective. From the injectivity of $\kappa$ we infer that the range of $\tau$ is dense in $(\overline{H^{(-s)}(\ZZ^n)})^{'}$. As $\tau$ is an isometry, it follows that $\tau$ is surjective. Therefore, $\tau$ is an isometric isomorphism. In the same way, we can show that $\kappa$ is an isometric isomorphism.
\end{proof}

\end{document}